\numberwithin{equation}{section}
\numberwithin{figure}{section}
\theoremstyle{plain}
\newtheorem{theorem}{Theorem}[section]
\newtheorem{lemma}[theorem]{Lemma}
\newtheorem{corollary}[theorem]{Corollary}
\newtheorem{proposition}[theorem]{Proposition}
\newtheorem{sublemma}[theorem]{Sublemma}
\theoremstyle{definition}
\newtheorem{remark}[theorem]{Remark}
\newtheorem{notation}[theorem]{Notation}
\newtheorem{problem}[theorem]{Problem}
\newtheorem{condition}[theorem]{Condition}
\newcommand{\mdim}{\mathrm{mdim}}
\newcommand{\diam}{\mathrm{diam}}
\newcommand{\widim}{\mathrm{Widim}}
\newcommand{\dist}{\mathrm{dist}}
\newcommand{\supp}{\mathrm{supp}}
\newcommand{\norm}[1]{\left|\!\left|#1\right|\!\right|}
\begin{document}

\title[Embedding minimal systems into Hilbert cubes]{Embedding minimal dynamical systems into Hilbert cubes}

\author{Yonatan Gutman, Masaki Tsukamoto}

\subjclass[2010]{37B05, 54F45, 94A12}

\keywords{Minimal dynamical system, Embedding, Hilbert cube, Mean dimension, Signal processing, Voronoi diagram}

\date{\today}

\thanks{Y.G was partially supported by the Marie Curie grant PCIG12-GA-2012-334564
and by the National Science Center (Poland) grant 2013/08/A/ST1/00275.
M.T. was supported by Grant-in-Aid for Young Scientists (B)
25870334 from JSPS}

\maketitle

\begin{abstract}
We study the problem of embedding minimal dynamical systems into the shift action on the Hilbert cube $\left([0,1]^N\right)^{\mathbb{Z}}$.
This problem is intimately related to the theory of mean dimension, which counts the averaged number
of parameters of dynamical systems.
Lindenstrauss proved that minimal systems of mean dimension less than $N/36$ can be embedded into 
$\left([0,1]^N\right)^{\mathbb{Z}}$, and he proposed the problem of finding the optimal value of the mean dimension for the 
embedding.
We solve this problem by proving that minimal systems of mean dimension less than $N/2$ can be embedded into 
$\left([0,1]^N\right)^{\mathbb{Z}}$.
The value $N/2$ is optimal.
The proof uses Fourier and complex analysis.
\end{abstract}

\section{Introduction}  \label{section: introduction}

\subsection{Embedding into Hilbert cubes}  \label{subsection: embedding into Hilbert cubes}

A tuple $(X,T)$ is called a \textbf{dynamical system} if 
$X$ is a compact metric space and $T$ is a homeomorphism of $X$.
Basic examples for us are \textbf{the shifts on the Hilbert cubes}:
Let $N$ be a natural number and consider the infinite product 
\[ \left([0,1]^{N}\right)^{\mathbb{Z}} = \cdots\times  [0,1]^N \times [0,1]^N\times [0,1]^N\times \cdots.\]
We define the shift $\sigma$ on it by 
\[ \sigma\left((x_n)_{n\in \mathbb{Z}}\right) =  (x_{n+1})_{n\in \mathbb{Z}},   \quad  \text{where $x_n\in [0,1]^N$}.  \]
$\left(\left([0,1]^N\right)^{\mathbb{Z}}, \sigma\right)$ is a dynamical system.
We study the problem of embedding arbitrary dynamical systems into $\left(\left([0,1]^N\right)^{\mathbb{Z}}, \sigma\right)$.
More formally, we study 
\begin{problem}[Embedding Problem]
Let $(X,T)$ be a dynamical system. Decide whether there exists a topological embedding 
\[  f: X\to \left([0,1]^N\right)^{\mathbb{Z}} \]
satisfying $f\circ T = \sigma \circ f$.
Such a map $f$ is called an embedding of a dynamical system.
\end{problem}

For example, consider an irrational rotation 
\[ X= \mathbb{R}/\mathbb{Z}, \quad T(x) = x+\alpha, \quad \alpha \in \mathbb{R}\setminus \mathbb{Q}.\]
Then the map 
\[ \mathbb{R}/\mathbb{Z} \to [0,1]^\mathbb{Z}, \quad 
   x\mapsto   \left(\frac{1+\cos(2\pi(x+n\alpha))}{2}\right)_{n\in \mathbb{Z}} \]
is an embedding of the irrational rotation $(\mathbb{R}/\mathbb{Z}, T)$.   
This example is very simple.
But in general the problem is much more involved and still not fully understood.
We quickly review the history of the problem before explaining the main result.

An obvious obstruction for the embedding comes from periodic points; 
if $(X,T)$ has too many periodic points then it cannot be embedded into the shift on $\left([0,1]^N\right)^{\mathbb{Z}}$.
For example, the shift on $\left([0,1]^2\right)^{\mathbb{Z}}$ cannot be embedded into the shift on 
$[0,1]^{\mathbb{Z}}$ because the fixed points set of the former is homeomorphic to 
 $[0,1]^2$, which cannot be embedded into $[0,1]$.
Somewhat surprisingly, Jaworski \cite{Jaworski} proved that periodic points are the \textit{only} obstruction if $X$ is finite dimensional:

\begin{theorem}[Jaworski, 1974]
If $(X,T)$ is a finite dimensional system having no periodic points, then we can embed it into the 
shift on $[0,1]^{\mathbb{Z}}$.
\end{theorem}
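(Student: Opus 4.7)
The plan is to construct an embedding of the form $f_\phi(x) = (\phi(T^n x))_{n \in \mathbb{Z}}$ for a suitable continuous $\phi: X \to [0,1]$, via a Baire category argument on the complete metric space $C(X,[0,1])$ equipped with the uniform norm. For each $\epsilon > 0$, set
\[ U_\epsilon = \{\phi \in C(X,[0,1]) : \text{for every } (x,y)\in X^2 \text{ with } d(x,y)\geq\epsilon, \ \exists\, n \in \mathbb{Z}, \ \phi(T^n x)\neq \phi(T^n y)\}. \]
A compactness argument shows $U_\epsilon$ is open, and the intersection $\bigcap_{k\geq 1} U_{1/k}$ coincides with the set of $\phi$ for which $f_\phi$ is injective, hence (since $X$ is compact) an embedding. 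By the Baire category theorem, it therefore suffices to show each $U_\epsilon$ is dense.

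To establish density, fix $\phi_0 \in C(X,[0,1])$, $\delta>0$, $\epsilon>0$, and let $K_\epsilon = \{(x,y)\in X\times X : d(x,y)\geq\epsilon\}$, which is compact. For $N \in \mathbb{N}$ introduce the finite-window difference map $\Psi_N^{\phi}: K_\epsilon \to \mathbb{R}^{2N+1}$, $\Psi_N^{\phi}(x,y) = (\phi(T^n x)-\phi(T^n y))_{|n|\leq N}$; if we can find $\phi'$ with $\|\phi'-\phi_0\|<\delta$ such that $\Psi_N^{\phi'}$ avoids the origin on $K_\epsilon$ for some $N$, then $\phi' \in U_\epsilon$. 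Because $X$ is finite dimensional, the product formula gives $\dim K_\epsilon \leq 2\dim X <\infty$, so choosing $N$ with $2N+1 > 2\dim X$, the classical Hurewicz-type general position theorem asserts that a generic continuous map $K_\epsilon \to \mathbb{R}^{2N+1}$ avoids any prescribed point.

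The main obstacle is that we are not free to perturb $\Psi_N^{\phi}$ arbitrarily: the coordinates $\phi(T^n x)$ are entangled across $n$, so only perturbations arising from a scalar perturbation of $\phi$ are admissible. Here the absence of periodic points becomes essential. Applying compactness to the continuous maps $x\mapsto d(T^n x, x)$ for $1\leq n\leq 2N$ yields $\eta>0$ with $d(T^i z, T^j z)\geq\eta$ whenever $z\in X$ and $0<|i-j|\leq 2N$. Consequently there exists a finite open cover $\{V_j\}$ of $X$ with each $V_j$ of diameter less than $\eta/2$ and such that for every $j$ the translates $T^{-N}V_j, \ldots, T^N V_j$ are pairwise disjoint. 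Using partitions of unity subordinate to refinements of such covers, one can build perturbations of $\phi_0$ of sup norm below $\delta$ whose values at $T^{-N}x, \ldots, T^N x$ may be prescribed essentially independently, supplying the degrees of freedom required to realize the generic perturbation of the previous paragraph. I expect this coupling-breaking step---packaging the local independence of perturbations cleanly enough to feed it into the dimension-theoretic general position result---to be the technical crux. Once this is done, $U_\epsilon$ is dense, and the Baire category theorem produces an embedding-inducing $\phi$.
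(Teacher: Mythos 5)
The paper itself does not prove Jaworski's theorem (it only cites Jaworski's thesis), so the closest in-paper point of comparison is the toy model, Theorem \ref{thm: toy-model} and Proposition \ref{prop: toy-model}, together with the simplicial machinery of Section \ref{section: linear maps from simplicial complex}. Your Baire-category skeleton is the standard one and is correct as far as it goes: $U_\epsilon$ is open by a compactness argument, $\bigcap_k U_{1/k}$ characterizes injectivity of $f_\phi$, and an injective equivariant continuous map from a compact space is an embedding. The dimension count $2N+1>2\dim X\geq \dim K_\epsilon$ is also the right one.

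The genuine gap is the density step, precisely the part you defer as the ``technical crux,'' and the mechanism you sketch does not close it. Two issues are conflated. First, the Hurewicz-type general position theorem perturbs an \emph{arbitrary} map $K_\epsilon\to\mathbb{R}^{2N+1}$ off the origin, whereas you may only use perturbations of the constrained form $(x,y)\mapsto(\psi(T^nx)-\psi(T^ny))_{|n|\leq N}$; one needs a genericity statement \emph{within} this family, which is what Lemma \ref{lemma: approximation by linear map} and Corollary \ref{cor: embeddings are dense} supply in the paper's framework. Second, and more seriously, your decoupling is only pointwise: being able to prescribe $\psi$ at $T^{-N}x,\dots,T^Nx$ independently for each fixed $x$ does not produce a single continuous $\psi$ that works for all $x$ simultaneously, because the windows overlap --- the value $\psi(T^Nx)$ is also the $(-N)$-th coordinate of the window based at $T^{2N}x$ --- and a given point of $X$ lies in translates $T^iV_j$ for many pairs $(i,j)$, so the coordinates remain globally entangled in exactly the way you are trying to break. (In addition, pairs $(x,y)\in K_\epsilon$ with $y=T^kx$ create collisions among the $4N+2$ points $T^nx,T^ny$, and your $\eta$ only separates a single window of length $2N$.) The standard way to globalize is a Rokhlin-tower/marker argument: for finite-dimensional aperiodic systems the marker property is available (\cite[Theorem 6.1]{Gutman 2}, itself a nontrivial input you would need), one tiles each orbit into blocks of length at least $2\dim X+1$, and on each block one attaches an $\varepsilon$-embedding $G_n:X\to[0,1]^n$ with respect to $d_n$, which exists because $\widim_\varepsilon(X,d_n)\leq\dim X$; this is exactly the scheme of Proposition \ref{prop: toy-model}. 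Until such a tower-based (or equivalent) globalization is supplied, the argument is incomplete.
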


The first named author \cite{Gutman 2} extended 
this result to the case of finite dimensional systems having \textit{reasonable} amount of periodic points.
The embedding problem for finite dimensional systems is fairly well understood now.
Therefore the main targets of our study are infinite dimensional systems.
But completely general infinite dimensional systems are still beyond our present technologies.
We have to consider some restrictions on our systems.

Probably the most fundamental dynamical systems are minimal systems.
A system $(X,T)$ is said to be \textbf{minimal} if for every $x\in X$ the orbit 
\[  \dots,T^{-3}x, T^{-2}x, T^{-1}x, x, Tx, T^2x, T^3x, \dots \] 
is dense in $X$.
Minimal systems have no periodic points unless they are finite.
(Finite systems are trivial cases.)
So there is no ``periodic points obstruction''.
Auslander \cite[p.193]{Auslander} asked whether we can embed every minimal system into the shift on 
$[0,1]^{\mathbb{Z}}$.
In other words, he asked whether there is another obstruction different from periodic points.
This problem remained open for more than 10 years.

Lindenstrauss--Weiss \cite{Lindenstrauss--Weiss} solved Auslander's problem
by using the theory of mean dimension.
Mean dimension is a topological invariant of dynamical systems introduced by 
Gromov \cite{Gromov}.
It counts the \textit{number of parameters of systems per second} like topological entropy counts the \textit{number of bits
per second for describing dynamical systems}.
We review the definition in Section \ref{subsection: review of mean dimension}.
The mean dimension of the shift on $\left([0,1]^N\right)^{\mathbb{Z}}$ is equal to $N$.
This is a rigorous statement of the intuitive idea that the system $\left([0,1]^N\right)^{\mathbb{Z}}$ has $N$ parameters per second.
If a system $(X,T)$ is embeddable into the shift on $\left([0,1]^N\right)^{\mathbb{Z}}$ then its mean dimension 
(denoted by $\mdim(X,T)$) is less than or equal to $N$.
Lindenstrauss--Weiss \cite[Proposition 3.5]{Lindenstrauss--Weiss} constructed a minimal system of mean dimension strictly greater than one.
So this system cannot be embedded into the shift on $[0,1]^{\mathbb{Z}}$ although it is minimal.

It is a big surprise that a partial converse holds (\cite[Theorem 5.1]{Lindenstrauss}):
\begin{theorem}[Lindenstrauss, 1999]
If $(X,T)$ is a minimal system with 
\[ \mdim(X,T) < \frac{N}{36} ,\]
then we can embed it into the shift on $\left([0,1]^N\right)^{\mathbb{Z}}$.
\end{theorem}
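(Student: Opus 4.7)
The strategy is a Baire category argument in the complete metric space $\mathcal{M}$ of continuous equivariant maps $f : X \to ([0,1]^N)^{\mathbb{Z}}$ endowed with the sup metric. For each $\epsilon > 0$, define $\mathcal{M}_\epsilon \subset \mathcal{M}$ to be the set of \emph{$\epsilon$-injective} maps, i.e.\ those $f$ admitting some $\eta > 0$ such that $\norm{f(x) - f(y)} < \eta$ forces $d(x,y) < \epsilon$. Openness of $\mathcal{M}_\epsilon$ is routine by compactness, and any element of $\bigcap_{n\ge 1} \mathcal{M}_{1/n}$ is an equivariant embedding of $X$. By the Baire category theorem it therefore suffices to prove that each $\mathcal{M}_\epsilon$ is dense in $\mathcal{M}$.

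Fix $f_0 \in \mathcal{M}$, a scale $\epsilon > 0$, and a perturbation tolerance $\delta > 0$; the goal is to produce $f \in \mathcal{M}_\epsilon$ with $\norm{f - f_0} < \delta$. Two ingredients enter. \emph{(i) Tower decomposition.} Minimality of $(X,T)$ yields for arbitrarily large $n$ a topological Rokhlin tower: an open set $U \subset X$ together with a continuous return-time function so that $U, T^{-1}U, \ldots, T^{-(n-1)}U$ nearly tile $X$ and every orbit segment of length $n$ visits $U$ at a predictable moment. \emph{(ii) Mean dimension bound.} The hypothesis $\mdim(X,T) < N/36$ supplies, for $n$ large enough, a continuous map $\pi : X \to P$ into a polyhedron $P$ with $\dim P < nN/36$ that is $\epsilon$-injective with respect to the Bowen metric $d_n(x,y) = \max_{0 \le j < n} d(T^j x, T^j y)$. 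Choose a generic position embedding $\iota : P \hookrightarrow ([0,1]^N)^n$ and, via a partition of unity subordinate to the tower, add a small multiple of $\iota(\pi(\,\cdot\,))$ to the $n$ consecutive time slots of $f_0$ corresponding to each excursion through the tower. The resulting $f$ is equivariant, continuous, and $\delta$-close to $f_0$; and whenever $d(x,y) \ge \epsilon$, the injectivity of $\pi$ with respect to $d_n$ forces a coordinate of $f(x)$ and $f(y)$ to disagree, giving $f \in \mathcal{M}_\epsilon$.

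The hard part, and the origin of the constant $1/36$, is a tight dimension budget in the tower slots. The $nN$ real coordinates above $U$ must simultaneously host three things: (a) a generic embedding of $P$, which by Menger--N\"obeling requires ambient dimension of order $2 \dim P + 1$; (b) room to interpolate across the Voronoi-type partition of unity that glues the per-tower modifications, costing a multiplicative factor depending on cell multiplicity; and (c) a buffer so that the perturbation tapers continuously to zero near the top and bottom of the tower, preserving continuity and equivariance when return times jump. Each overhead contributes a constant factor; tracked carefully and multiplied together they give the value $36$ in Lindenstrauss's argument. Any improvement of the constant therefore demands a method that handles general position, cell overlap and tower-boundary tapering in a single joint construction rather than in three successive, independent steps.
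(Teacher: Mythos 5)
First, a point of orientation: this paper does not prove the $N/36$ theorem at all; it quotes it from Lindenstrauss's 1999 paper and then supersedes it with the sharp constant $N/2$ (Theorem \ref{thm: main theorem}). So your proposal has to be judged as a reconstruction of Lindenstrauss's original argument. Your overall architecture (Baire category on the space of equivariant maps, reduction to density of $\varepsilon$-embeddings, Rokhlin-type towers from minimality, and $\widim_\varepsilon$-bounds from the mean dimension hypothesis) is the correct skeleton, and it matches the toy-model that the paper itself presents in Section \ref{subsection: ideas of the proof}.

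However, two steps as you describe them would fail. (i) The perturbation mechanism is wrong: if $\iota\circ\pi$ is an $\varepsilon$-embedding of the block and you form $f_0 + t\,\iota\circ\pi$ for small $t$, there is no reason this sum is injective where you need it --- the equation $f_0(x)-f_0(y)=-t\left(\iota\pi(x)-\iota\pi(y)\right)$ can have solutions with $d_n(x,y)\geq\varepsilon$. The correct move (the paper's Lemma \ref{lemma: approximation by linear map} together with Corollary \ref{cor: embeddings are dense}) is to \emph{replace} $f_0$ on the block by a simplicial map $g\circ\pi$ that is $\delta$-close to $f_0$ and then put $g$ in general position; closeness to $f_0$ and injectivity are then achieved simultaneously rather than by superposition. (ii) The continuity issue is the real crux and is not resolved by your ``tapering'' remark: for a general minimal system the marker set $U$ is open but not clopen, so return times and tower membership are \emph{not} continuous functions of $x$, and a partition of unity subordinate to the tower does not exist in the naive sense. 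This is exactly Difficulty 2 in Section \ref{subsection: ideas of the proof}; Lindenstrauss's proof (and this paper's) needs a genuinely dynamical device (here, the Voronoi diagram one dimension higher) to make the block decomposition vary continuously with $x$. Finally, your last paragraph asserts that the overheads ``multiply to $36$'' but performs no accounting, so the specific bound $\mdim(X,T)<N/36$ --- which is the actual content of the statement --- is never established. As written, the proposal proves at best that \emph{some} constant works, not that $1/36$ does.
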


This is a wonderful theorem.
But the number $N/36$ looks artificial.
Quoting Lindenstrauss \cite[p. 229]{Lindenstrauss} with a slight change of notations:
\begin{quote}
   \textit{Another nice question that remains open is what is the largest constant $c$ such that $\mdim(X,T)< c N$
   implies that $(X,T)$ can be embedded in $\left(\left([0,1]^N\right)^{\mathbb{Z}}, \mathrm{shift}\right)$?
   The bound we get is that $c\geq 1/36$.}
\end{quote}   
We solve this problem. The answer is $c=1/2$.
Namely 

\begin{theorem}[Main Theorem]  \label{thm: main theorem}
If $(X,T)$ is a minimal system with 
\[ \mdim(X,T) < \frac{N}{2}, \]
then we can embed it into the shift on $\left([0,1]^N\right)^{\mathbb{Z}}$.
\end{theorem}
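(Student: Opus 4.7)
The plan is to follow the general strategy pioneered by Lindenstrauss, recasting the embedding problem as a Baire category question in the space $C(X,[0,1]^N)$: an equivariant map $I_\phi \colon X \to ([0,1]^N)^{\mathbb{Z}}$ built from $\phi \in C(X,[0,1]^N)$ by $I_\phi(x) = (\phi(T^n x))_{n \in \mathbb{Z}}$ is an embedding if and only if $\phi$ separates every pair $x \neq y$ in $X$ along the orbit. The set of $\phi$ that fail to do so on a given closed pair set is meager once one can show, for each $\varepsilon > 0$, that the $\varepsilon$-injective $\phi$ are dense. Thus the whole problem reduces to a perturbation lemma: given any equivariant $\phi$ and $\varepsilon$, produce an $\varepsilon$-close $\phi' \in C(X,[0,1]^N)$ such that $I_{\phi'}$ has the ``$\varepsilon$-wide diagonal'' property, i.e.\ distinct points of $X$ are sent to orbits that are separated by at least $\varepsilon$ at some coordinate.

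For the perturbation lemma I would first apply a Rokhlin-type tower decomposition, supplied by the small-boundary-property technology in the minimal setting, to chop $X$ into very long and thin tubes $B, TB, \ldots, T^{L-1}B$. On each tube one must design a map into $([0,1]^N)^L$ whose coordinates along the orbit carry enough information to recover the starting point $x \in B$ up to small error; the efficiency of this encoding is exactly what is measured by the width dimension $\widim_\varepsilon$ of the tube, and under the hypothesis $\mdim(X,T) < N/2$ one knows that, as $L \to \infty$, the tube $B$ admits $\varepsilon$-embeddings into finite polyhedra of dimension less than $LN/2$. The task is then to pack such a polyhedron into the real vector space $\mathbb{R}^{LN}$ of length-$L$ words over $[0,1]^N$ injectively and, crucially, in such a way that the boundaries between adjacent tubes can be reconciled.

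The heart of the new proof, and the source of the improvement from $N/36$ to $N/2$, must be the encoding step itself. Here the keywords Fourier analysis, complex analysis and Voronoi diagram in the abstract point to a Nyquist-type construction: one represents the $N$-real-valued signal on a length-$L$ window as the real and imaginary parts of $N/2$ complex trigonometric polynomials of bandwidth comparable to $L$, so that a target polyhedron of real dimension close to $LN/2$ can be parameterized as the image of a bandlimited signal in $\mathbb{C}^{LN/2}$. Sampling such a bandlimited signal on $L$ points recovers it exactly, which is why the factor $1/2$ is the right constant. A Voronoi partition of the target polyhedron is used to quantize the finite approximation produced by the $\widim_\varepsilon$ bound, so that the encoding becomes a piecewise definition one can glue equivariantly, and complex-analytic (e.g.\ Carleman or Paley--Wiener) arguments provide the controlled interpolation between Voronoi cells and the needed bandlimiting.

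The hard part, I expect, is the gluing: the Rokhlin tower has boundary pieces where different tubes meet, and in the minimal regime one cannot simply throw away the boundary. Coupling the local bandlimited constructions on overlapping towers without destroying injectivity, while keeping the $L^\infty$-perturbation under $\varepsilon$, is where the Fourier-analytic machinery must do real work; one probably needs a tapered window or smooth partition of unity in the frequency domain to combine the local encodings, paying a small loss that is absorbed because one only needs $\mdim(X,T) < N/2$ strictly. Optimality of $N/2$ comes for free from the observation that the shift on $([0,1]^{N/2})^{\mathbb{Z}}$ has mean dimension $N/2$ and embeds trivially, while standard examples of Lindenstrauss--Weiss type show the constant cannot be pushed beyond this.
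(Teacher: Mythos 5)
Your high-level architecture -- Baire category reduction to a perturbation lemma, $\widim_\varepsilon$-embeddings into polyhedra of dimension below $LN/2$ on long orbit segments, a Nyquist-rate band-limited encoding explaining the constant $1/2$ -- does match the paper. But several of your concrete steps would fail. First, you cannot invoke the small boundary property to build the tower: SBP forces mean dimension zero (Lindenstrauss--Weiss), so no minimal system with $0<\mdim(X,T)<N/2$ has it. The paper instead uses only the \emph{marker property} (which every nontrivial minimal system has), and the open marker set is not clopen, so the induced tower does \emph{not} vary continuously with $x$. The paper's fix is the point you have misplaced: the Voronoi diagram is not a quantization of the target polyhedron but a \emph{dynamical Voronoi diagram in the plane} $\mathbb{R}^2$ built from the marker function, whose intersection with the real line yields a tiling $\mathbb{R}=\bigcup_n I(x,n)$ depending continuously and equivariantly on $x$. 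Relatedly, your scheme gives no way to recover, from the image point alone, which tile a given time coordinate lies in -- without that, injectivity of the glued map cannot be checked. The paper handles this by embedding into $B_1(V[a,b])\times Y$ and then encoding the factor $Y$ into a \emph{disjoint frequency band}; this is precisely why the argument must run through continuous band-limited signals and only sample at the very end, rather than working directly in $C(X,[0,1]^N)$ as you propose (Remark 3.7 of the paper).

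Second, even granting a continuous tiling, your proposal does not address what the paper identifies as the most crucial difficulty: some tiles $I(x,n)$ are unavoidably short, and band-limited functions cannot be perturbed meaningfully on short windows. The paper resolves this with a combinatorial ``tax system'' (weight functions, Lemma 5.5): long intervals cede part of their coding capacity, routed through a cone construction $CQ$, to take care of lattice points near tile boundaries, and Lemma 5.4 shows the total surplus suffices. Finally, because tile endpoints vary continuously, the perturbation maps must interpolate between integer shifts; the paper achieves this with a one-parameter family of simplicial maps whose convex combinations $(1-c)F_{n,r}+cF_{n,r+1}$ remain embeddings, proved by a real-algebraic-geometry genericity argument, together with an averaging over a product measure $\mu_x$ to restore continuity at the jump loci. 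Your ``tapered window in the frequency domain'' gestures at the right kind of tool but does not substitute for these three mechanisms, each of which is needed to close the argument.
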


The value $N/2$ is optimal because 
Lindentsrauss and the second named author \cite[Theorem 1.3]{Lindenstrauss--Tsukamoto} constructed a minimal system of mean dimension  
$N/2$ which cannot be embedded into the shift on $\left([0,1]^N\right)^{\mathbb{Z}}$.
The statement of Theorem \ref{thm: main theorem} also holds for extensions of 
nontrivial (i.e. infinite) minimal systems; see Corollary \ref{cor: embed extensions of nontrivial minimal system into Hilbert cubes}.
Therefore the embedding problem is now well understood for nontrivial minimal systems and their extensions.

The proof of Theorem \ref{thm: main theorem} has a fascinating feature.
The nature of the statement itself is purely abstract topological dynamics.
But crucial ingredients of the proof are \textit{Fourier analysis} and \textit{complex function theory}.
Therefore the theorem exhibits a new unexpected interaction between topological dynamics and 
classical analysis.

\subsection{Embedding via signal processing}  \label{subsection: embedding via signal processing}

Elements $x$ of $\left([0,1]^N\right)^{\mathbb{Z}}$ are \textbf{discrete signals} valued in the $N$-dimensional cube $[0,1]^N$:
\[  \dots \> x_{-3} \> x_{-2} \> x_{-1} \> x_0 \> x_1 \> x_2 \> x_3 \> \dots, \quad \text{where } x_n\in [0,1]^N. \]
(Here ``discrete'' means ``time-discrete''.)
Informally speaking, the embedding problem asks \textit{how to encode dynamical systems into discrete signals}.
Our approach in Theorem \ref{thm: main theorem} is the following:
\begin{equation*}
  \begin{CD}
   \text{A dynamical system} @>{\text{encode}}>> \text{Continuous signals}
   @>{\text{sampling}}>> \text{Discrete signals}.
  \end{CD}
\end{equation*}
First we encode a given system into \textbf{(time-)continuous signals}.
Next we convert continuous signals into discrete ones by sampling.
Continuous signals are more flexible than discrete ones 
(see Remark \ref{remark: why we use continuous signals}), and 
we can prove the sharp embedding result.

We prepare some definitions on signal analysis.
For rapidly decreasing functions $\varphi:\mathbb{R}\to \mathbb{C}$ we define 
the Fourier transforms by 
\[ \mathcal{F}(\varphi)(\xi) = \int_{-\infty}^\infty e^{-2\pi \sqrt{-1}t\xi} \varphi(t) dt , \quad 
    \overline{\mathcal{F}}(\varphi)(t) = \int_{-\infty}^\infty e^{2\pi\sqrt{-1}t\xi} \varphi(\xi)d\xi.   \]
We have $\overline{\mathcal{F}}(\mathcal{F}(\varphi)) = \mathcal{F}(\overline{\mathcal{F}}(\varphi)) = \varphi$. 
We extend $\mathcal{F}$ and $\overline{\mathcal{F}}$ to tempered distributions in the standard way
(Schwartz \cite[Chapter 7]{Schwartz}).
For example, $\mathcal{F}(1) = \boldsymbol{\delta}_0$ is the delta probability measure at the origin.
Take two real numbers $a<b$.
A bounded continuous function $\varphi:\mathbb{R}\to \mathbb{C}$ is said to be 
\textbf{band-limited in $[a,b]$} if
\[   \supp \mathcal{F}(\varphi) \subset [a,b]. \]
Here recall that $\supp \mathcal{F}(\varphi)\subset [a,b]$ means that
the pairing $\langle \mathcal{F}(\varphi), \phi\rangle$ vanishes for any rapidly decreasing function $\phi:\mathbb{R}\to \mathbb{C}$
with $\supp(\phi)\cap [a,b]=\emptyset$.
Let $V[a,b]$ be the space of bounded continuous functions $\varphi:\mathbb{R}\to \mathbb{C}$ band-limited in $[a,b]$.
This is a Banach space with respect to the $L^\infty$-norm over the line $\mathbb{R}$.

For two functions $\varphi_1, \varphi_2\in V[a,b]$ we define a distance between them by 
\begin{equation}  \label{eq: distance on V[a,b]}
    \boldsymbol{d}(\varphi_1,\varphi_2) = \sum_{n=1}^\infty 2^{-n} \norm{\varphi_1-\varphi_2}_{L^\infty([-n,n])}.
\end{equation}    
We define $B_1(V[a,b])$ as the space $\varphi\in V[a,b]$ satisfying $\norm{\varphi}_{L^\infty(\mathbb{R})} \leq 1$.
This is compact with respect to the distance $\boldsymbol{d}$; 
see Lemma \ref{lemma: compactness of B_1(V[a,b])} in 
Section \ref{subsection: review of band-limited functions}.
Throughout the paper, $B_1(V[a,b])$ is always endowed with the topology given by $\boldsymbol{d}$,
which coincides with the standard topology of tempered distributions \cite[Chapter 7, Section 4]{Schwartz}.
We define 
\[ \sigma: V[a,b]\to V[a,b], \quad  \sigma(\varphi)(t) = \varphi(t+1), \]
and consider the dynamical system $(B_1(V[a,b]), \sigma)$.
We call this system \textbf{the shift on} $B_1(V[a,b])$.
This is related to the shifts on the Hilbert cubes by the next lemma (sampling).

\begin{lemma}  \label{lemma: sampling theorem for real signals}
Let $N$ be a natural number.
Let $c>0$ and consider the space $V^\mathbb{R}[-c,c]$ of bounded continuous functions 
$\varphi:\mathbb{R}\to \mathbb{R}$ satisfying $\supp \mathcal{F}(\varphi)\subset [-c,c]$.
If $c<N/2$ then 
we can embed the system $(B_1(V^{\mathbb{R}}[-c,c]), \sigma)$ into the shift on $\left([-1,1]^N\right)^{\mathbb{Z}}$.
\end{lemma}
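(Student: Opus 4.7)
The plan is to define the embedding by Shannon--Nyquist sampling at rate $N$. Concretely, set
$$ f \colon B_1(V^{\mathbb{R}}[-c,c]) \to \left([-1,1]^N\right)^{\mathbb{Z}}, \qquad f(\varphi)_n = \bigl(\varphi(n), \varphi(n+\tfrac{1}{N}), \ldots, \varphi(n+\tfrac{N-1}{N})\bigr). $$
Since $\norm{\varphi}_{L^\infty(\mathbb{R})} \leq 1$, each sample lies in $[-1,1]$, so the image lands in the correct Hilbert cube. The identity $\sigma(\varphi)(t) = \varphi(t+1)$ translates the time axis by one unit, which advances the block index by one, giving equivariance $f\circ \sigma = \sigma \circ f$. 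Continuity of $f$ is immediate from the metric $\boldsymbol{d}$ in \eqref{eq: distance on V[a,b]}, since convergence with respect to $\boldsymbol{d}$ implies uniform convergence on every compact interval, hence pointwise convergence of all samples.

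Because $B_1(V^{\mathbb{R}}[-c,c])$ is compact and $\left([-1,1]^N\right)^{\mathbb{Z}}$ is Hausdorff, it suffices to prove that $f$ is injective; continuity plus injectivity on a compact domain automatically upgrades to a topological embedding. Injectivity reduces to the following Shannon--Whittaker type statement: a function $\psi \in V^{\mathbb{R}}[-c,c]$ that vanishes on $\tfrac{1}{N}\mathbb{Z}$ must vanish identically. Here the strict inequality $c < N/2$ is decisive, because it puts the spectrum $\supp \mathcal{F}(\psi) \subset [-c,c]$ strictly inside the Nyquist window $(-N/2, N/2)$, so the translates $\mathcal{F}(\psi)(\,\cdot\, + mN)$ for $m\in \mathbb{Z}$ have pairwise disjoint supports.

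To realize this rigorously, choose a Schwartz function $\chi \colon \mathbb{R} \to \mathbb{R}$ whose Fourier transform $\mathcal{F}(\chi)$ is identically $1$ on $[-c,c]$ and satisfies $\supp \mathcal{F}(\chi) \subset (-N/2, N/2)$. Then $\mathcal{F}(\psi \ast \chi) = \mathcal{F}(\psi) \cdot \mathcal{F}(\chi) = \mathcal{F}(\psi)$, so $\psi = \psi \ast \chi$ as bounded continuous functions. Combining this with the Poisson summation identity applied to $\chi$ (whose rapid decay and bandlimited transform make it the interpolation kernel) yields the Shannon reconstruction
$$ \psi(t) = \frac{1}{N}\sum_{k \in \mathbb{Z}} \psi(k/N) \, \chi(t - k/N), $$
valid for every $\psi \in V^{\mathbb{R}}[-c,c]$. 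If all samples $\psi(k/N)$ vanish, the right-hand side is identically zero and we conclude $\psi \equiv 0$.

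The main obstacle is justifying the interpolation formula for a merely bounded band-limited $\psi$, since $\psi$ is not Schwartz and the series on the right is not absolutely convergent without a summation method. The cleanest route is to apply Poisson summation to the Schwartz function $t \mapsto \chi(s-t)\,\eta(t)$ for a suitable dilation $\eta(t) = \eta_0(\varepsilon t)$ of a fixed cutoff $\eta_0$ with $\eta_0(0)=1$, then send $\varepsilon \to 0$ and use the decay of $\chi$ together with $\norm{\psi}_{L^\infty(\mathbb{R})} < \infty$ to pass to the limit inside the sum. Once this interpolation identity is established the remainder of the proof is routine.
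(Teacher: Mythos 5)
Your reduction of the lemma to the injectivity of sampling on $\frac{1}{N}\mathbb{Z}$, together with the observation that a continuous injection from a compact space is automatically an embedding, is exactly how the paper proceeds. Where you genuinely diverge is in the proof of injectivity itself. The paper's Lemma \ref{lemma: sampling theorem} establishes it by complex analysis: via Paley--Wiener (Lemma \ref{lemma: Paley--Wiener}) a nonzero $\psi\in V[-c,c]$ extends to an entire function of exponential type $2\pi c$, and Nevanlinna's first main theorem shows that such a function cannot vanish on a lattice of density $N>2c$ --- the zero-counting integral would grow like $2Nr$ while the growth term only allows $4cr$. You instead take the classical Fourier-analytic route: a band-pass Schwartz kernel $\chi$ with $\mathcal{F}(\chi)\equiv 1$ near the spectrum and $\supp\mathcal{F}(\chi)$ inside the Nyquist window, plus Poisson summation, to derive the Shannon--Whittaker reconstruction of $\psi$ from its samples. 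Both routes are valid. The paper's argument is shorter once Paley--Wiener is in place (which it needs anyway elsewhere) and gives the sharp statement for arbitrary sampling density in one stroke; yours avoids Nevanlinna theory but shifts the burden onto distribution-theoretic bookkeeping for a merely bounded, non-decaying $\psi$, which is precisely the step you leave as a sketch. Two points in your write-up need tightening. First, requiring $\mathcal{F}(\chi)=1$ only \emph{on} $[-c,c]$ is not enough to conclude $\mathcal{F}(\chi)\cdot\mathcal{F}(\psi)=\mathcal{F}(\psi)$: a smooth function vanishing on the support of a distribution need not annihilate it (e.g.\ $x\cdot\boldsymbol{\delta}_0'\neq 0$). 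You must take $\mathcal{F}(\chi)\equiv 1$ on a neighborhood $[-c',c']$ with $c<c'<N/2$, which the hypothesis $c<N/2$ permits. Second, the regularization limit establishing the interpolation formula is asserted rather than carried out; a cleaner variant is to multiply $\psi$ by the functions $\varphi_\varepsilon$ from the proof of Lemma \ref{lemma: Paley--Wiener}, land in $L^2$ with spectrum still inside $(-N/2,N/2)$, apply the standard $L^2$ sampling theorem, and let $\varepsilon\to 0$. With these repairs your argument is complete.
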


\begin{proof}
By a sampling theorem  (see Lemma \ref{lemma: sampling theorem} in Section \ref{subsection: review of band-limited functions}), 
the map 
\[ V^\mathbb{R}[-c,c]\to \ell^\infty\left(\frac{1}{N}\mathbb{Z}\right), \quad 
   \varphi \mapsto \varphi|_{\frac{1}{N}\mathbb{Z}} \]
 is injective.
 The above statement follows from this.
\end{proof}

\begin{remark}
The mean dimensions of the shifts on $B_1(V[a,b])$ and $B_1(V^{\mathbb{R}}[-c,c])$ are $2(b-a)$ and $2c$ respectively.
More generally, if we denote by $V(E)$ the space of bounded continuous functions in $\mathbb{R}$ 
band-limited in a compact subset $E\subset \mathbb{R}$ then the mean dimension of the shift on 
$B_1(V(E))$ is equal to $2|E|$. Here $|E|$ is the Lebesgue measure of $E$.
This fact is probably helpful for clarifying the picture.
But we don't need it for the proof of Theorem \ref{thm: main theorem}.
So we omit the detailed explanations in this paper.
\end{remark}

The next result is the continuous signal version of the main theorem.

\begin{theorem} \label{thm: embed minimal systems into continuous signals}
If $(X,T)$ is a nontrivial minimal system with 
\[  \mdim(X,T)<b-a, \]
then we can embed it into 
the shift on $B_1(V[a,b])$.
Here ``nontrivial'' means that $X$ is an infinite set.
\end{theorem}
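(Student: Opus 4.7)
The plan is to follow the Baire-category framework introduced by Lindenstrauss. Let $\mathcal{M}$ denote the space of continuous equivariant maps $f\colon X\to B_1(V[a,b])$ (those satisfying $f\circ T=\sigma\circ f$), endowed with the uniform topology induced by $\boldsymbol{d}$. Since $B_1(V[a,b])$ is a compact metric space, $\mathcal{M}$ is a complete metric space. For each $\delta>0$, the set
\[
\mathcal{U}_\delta=\{f\in\mathcal{M}\;:\;f(x)\neq f(y)\text{ whenever }d_X(x,y)\geq\delta\}
\]
is open in $\mathcal{M}$. If $\mathcal{U}_{\delta_k}$ is dense for every term of some fixed sequence $\delta_k\downarrow 0$, the Baire theorem produces an element of $\bigcap_k\mathcal{U}_{\delta_k}$, which is automatically a topological embedding. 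So the proof reduces to a single density statement.

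The core work is then: given $f\in\mathcal{M}$ and $\eta>0$, produce $g\in\mathcal{M}$ with $\sup_x\boldsymbol{d}(f(x),g(x))<\eta$ that is $\delta$-injective. I plan to invoke the small-boundary-property machinery available for nontrivial minimal systems to decompose a large portion of $X$ by a Rokhlin-type tower $\bigsqcup_{i=0}^{n-1}T^iF$ with arbitrarily small boundary loss, and then to build the perturbation equivariantly from data prescribed on the base $F$. Under equivariance, specifying $g$ on $F$ is equivalent to specifying a map $F\to V[a,b]|_{[0,n]}$ into an $n$-window of band-limited signals, so the question reduces to constructing such a window-valued map that separates all pairs of $\delta$-separated $X$-points whose orbits visit the tower together.

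The central analytic input---and the source of the sharp constant---is that band-limited signals band-limited in $[a,b]$, when restricted to a window of length $n$, carry essentially $2n(b-a)$ real parameters, matching the Shannon--Landau sampling heuristic and the mean dimension $2(b-a)$ of the target shift. This should be packaged as a Widim-type estimate
\[
\widim_\epsilon\bigl(B_1(V[a,b])|_{[0,n]}\bigr)\leq 2n(b-a)+o(n),
\]
together with a general-position perturbation lemma: any near-coincidence $f(x)\approx f(y)$ visible on the $n$-window can be resolved by adding a band-limited correction drawn from a family whose effective dimension is strictly greater than the Widim of the pair-projection of $X$ onto $n$-blocks. The hypothesis $\mdim(X,T)<b-a$ gives a Widim bound of order $n(b-a)$ for the orbit-projection of $X\times X$, leaving the required factor-of-two margin for resolution inside a $2n(b-a)$-parameter family.

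The main obstacle---and where the paper's advertised use of Fourier and complex analysis should enter---is the proof of the sharp general-position lemma for band-limited signals. Unlike the simplicial approximations used by Lindenstrauss, which force the suboptimal $N/36$, band-limited signals extend by Paley--Wiener to entire functions of exponential type $\pi(b-a)$, and the rigidity of such entire functions (analytic continuation, Jensen-type bounds on the density of zeros, and uniqueness from sampling on $\tfrac1N\mathbb{Z}$) is what allows the codimension count to match $b-a$ exactly. Once this rigidity is converted into a perturbation lemma with the optimal constant, the Baire-category conclusion above is routine. The nontriviality assumption that $X$ is infinite enters only to guarantee arbitrarily tall Rokhlin towers from minimality.
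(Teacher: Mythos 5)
Your top-level reduction (Baire category in $C_T(X,B_1(V[a,b]))$, openness of the $\delta$-injectivity condition, hence reduction to a single density/perturbation statement) matches the paper's framework, and your parameter count -- an $n$-window of $[a,b]$-band-limited signals carries about $2n(b-a)$ real dimensions versus a $\widim$ of order $n\,\mdim(X,T)$ for $X$, leaving the $2m+1$ general-position margin exactly when $\mdim<b-a$ -- is the correct heuristic. But the proposal has a genuine structural gap. You propose to make a generic $f$ itself injective by prescribing data on the base of a Rokhlin tower; for this to separate points you must know, given only $g(x)=g(y)$, that $x$ and $y$ occupy the \emph{same level} of the tower, so that the windows carrying the $\varepsilon$-embedding data line up. Nothing in your construction forces the signal to encode the tower position. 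The paper never proves that a generic $f$ alone is an embedding: its technical main theorem only makes the \emph{pair} $(f,\Phi)$ an embedding, where $\Phi$ is a factor map onto a system with the marker property, and the alignment information comes for free from $\Phi(x)=\Phi(y)$. The passage from that statement to the present theorem is then a separate trick specific to continuous signals: split $[a,b]$ into two disjoint bands $[a,c_1]$ and $[c_2,b]$, encode a nontrivial minimal factor $Z$ of $X$ into $B_1(V[c_2,b])$ (Lemma 3.2), run the technical theorem on the band $[a,c_1]$ with $\mdim(X,T)<c_1-a$, and superpose the two signals. Your proposal contains no substitute for this step.

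Two further difficulties you wave past are exactly the ones the paper spends most of its length resolving. First, for a minimal system that is not zero-dimensional there is no clopen tower base, so ``data prescribed on the base $F$'' varies discontinuously in $x$ at the tower boundaries; the paper handles this with the Voronoi diagram in $\mathbb{R}^2$ (producing tiles of $\mathbb{R}$ that move continuously with $x$) together with an averaging over a product probability measure to smooth the residual jumps of the integer sampling grid. Second, the resulting tiles can be arbitrarily short, and band-limited rigidity prevents any useful perturbation on a short window; the paper's weight/``tax'' construction (Lemma 6.5) transfers separating power from long tiles to the lattice points near short ones, at the cost of the extra cone factor $CQ$ in the general-position count. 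Finally, a smaller mischaracterization: the sharp general-position lemma is \emph{not} where the complex analysis lives -- the paper still uses simplicial maps and a real-algebraic codimension count, just as Lindenstrauss did. The Fourier/complex analysis enters in the Beurling-type interpolation products that convert discrete perturbation values on $\tfrac1\rho\mathbb{Z}$ into band-limited corrections with quadratic decay, and in the Nevanlinna-theoretic sampling theorem; the sharp constant comes from the two-band trick plus the interpolation density $\rho<b-a$, not from replacing the simplicial machinery.
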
 

Theorem \ref{thm: embed minimal systems into continuous signals}
is proved in Section \ref{subsection: proof of continuous signal version of the main theorem}.

\begin{proof}[Proof of Theorem \ref{thm: main theorem}, assuming Theorem \ref{thm: embed minimal systems into continuous signals}]
If $X$ is finite, then the statement is trivial.
So we assume that $(X,T)$ is a nontrivial minimal system.
Take $0<a<b <N/2$ with $\mdim(X,T)<b-a$.
By Theorem \ref{thm: embed minimal systems into continuous signals} we can embed $(X,T)$ into the system 
$B_1(V[a,b])$, which becomes a subsystem of $B_1(V^{\mathbb{R}}[-b,b])$ by 
\[ B_1(V[a,b]) \to B_1(V^{\mathbb{R}}[-b,b]), \quad \varphi \mapsto \frac{1}{2}(\varphi + \overline{\varphi}).\]
By Lemma \ref{lemma: sampling theorem for real signals}
we can embed $B_1(V^{\mathbb{R}}[-b,b])$ into the shift on 
$\left([-1,1]^N\right)^{\mathbb{Z}} \cong \left([0,1]^N\right)^{\mathbb{Z}}$.
\end{proof}

\subsection{Open problems} \label{subsection: open problems}

The following are the most significant questions in the direction of the paper.

 \begin{itemize}
   \item \textit{Can one solve the embedding problem for general dynamical systems?}
   The case of minimal systems is fairly well understood now. 
   But we still don't have a clear picture for more general dynamical systems. 
   Lindenstrauss and the second named author \cite[Conjecture 1.2]{Lindenstrauss--Tsukamoto} conjectured that if a 
   dynamical system $(X,T)$ satisfies 
   \[  \mdim(X,T) < \frac{N}{2}, \quad \frac{\dim \{x|\, T^n x=x\}}{n} < \frac{N}{2} \quad (\forall n\geq 1), \]
   then we can embed it into the shift on $([0,1]^N)^{\mathbb{Z}}$.
   
  \item \textit{Can one generalize the result to the case of non-commutative group actions?}
  Probably it is possible to generalize the result to the case of $\mathbb{Z}^k$-actions
  by using the techniques of \cite{Gutman--Lindenstrauss--Tsukamoto} and the present paper.
  But the generalization to non-commutative groups seems to require substantially new ideas.
 \end{itemize}

\textbf{Acknowledgment.}
The authors would like to deeply thank Professor Elon Lindenstrauss.
His influence is prevailing in the paper.
The authors learned from him the most important ideas such as \textit{signal processing} and 
\textit{Voronoi diagram in one-dimension higher space}.

\section{Basic materials}  \label{section: basic materials}

We review mean dimension and band-limited functions in this section.

\subsection{Review of mean dimension}  \label{subsection: review of mean dimension}

Here we review the definition of mean dimension.
For the details, see Gromov \cite{Gromov} and Lindenstrauss--Weiss \cite{Lindenstrauss--Weiss}.

Let $(X,d)$ be a compact metric space with a continuous function $\rho:X\times X\to \mathbb{R}$.
Let $Y$ be a topological space.
For $\varepsilon>0$, a continuous map $f:X\to Y$ is called an \textbf{$\varepsilon$-embedding with respect to $\rho$} if 
it satisfies 
\[ f(x)=f(y) \Longrightarrow \rho(x, y) < \varepsilon.\]
Note that this is an open condition for $f$ in the compact-open topology.
We usually consider the case of $\rho=d$, but sometimes $\rho$ is a semi-distance 
different from $d$.

We define $\widim_\varepsilon(X,d)$ as the minimum integer $n$ such that there exist an $n$-dimensional finite simplicial complex $P$
and an $\varepsilon$-embedding $f:X\to P$ with respect to the distance $d$.
It is classically known that the topological dimension $\dim X$ is recovered by 
\[ \dim X = \lim_{\varepsilon \to 0} \widim_\varepsilon (X,d).\]

Let $T:X\to X$ be a homeomorphism.
For a natural number $N$ we define a distance $d_N$ on $X$ by 
\[   d_N(x,y) = \max_{0\leq n<N} d(T^n x, T^n y) . \]
We define the \textbf{mean dimension} of the dynamical system $(X,T)$ by 
\[  \mdim(X,T) = \lim_{\varepsilon \to 0} \left(\lim_{N\to \infty} \frac{\widim_\varepsilon (X,d_N)}{N}\right). \]
This limit exists because the function $N\mapsto \widim_\varepsilon (X, d_N)$ is subadditive.
The mean dimension is a topological invariant of the dynamical system $(X,T)$, namely, it is independent of the choice of the distance $d$.

\subsection{Review of band-limited functions}  \label{subsection: review of band-limited functions}

Here we review basic properties of band-limited functions.
All the results in this subsection are classically known.
But some of them are not very popular in general mathematical community.
So we provide self-contained proofs, assuming (hopefully) only well-known results.
For systematic treatments, see Beurling \cite[pp. 341-365]{Beurling} and Nikol'skii \cite[Chapter 3]{Nikol'skii}.

\begin{lemma}  \label{lemma: exponential type}
Let $a>0$, and 
let $f:\mathbb{C}\to \mathbb{C}$ be a holomorphic function satisfying 
\begin{equation}  \label{eq: exponential type}
  \exists C>0 \> \forall x,y\in \mathbb{R}: \> |f(x+y\sqrt{-1})| \leq C e^{2\pi a|y|}.
\end{equation}
Then it satisfies 
\[  |f(x+y\sqrt{-1})| \leq e^{2\pi a|y|} \norm{f}_{L^\infty(\mathbb{R})}.\]
Here $\norm{f}_{L^\infty(\mathbb{R})}$ is the supremum of $|f|$ over the real line.
\end{lemma}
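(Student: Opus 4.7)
The plan is to reduce the stated bound to two applications of the Phragmén--Lindelöf principle, one on each half-plane. Write $M = \norm{f}_{L^\infty(\mathbb{R})}$ and observe that the desired conclusion, separately on $\{y \geq 0\}$ and $\{y \leq 0\}$, amounts to saying that a certain auxiliary holomorphic function is bounded by $M$ on a half-plane after I divide away the expected exponential growth.

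For the upper half-plane, I will introduce
\[
  g(z) \;=\; f(z)\, e^{2\pi i a z},
\]
which is entire. For $z = x + y\sqrt{-1}$ with $y \geq 0$ one has $|e^{2\pi i a z}| = e^{-2\pi a y}$, so the hypothesis (\ref{eq: exponential type}) yields $|g(z)| \leq C$ on the closed upper half-plane, i.e.\ $g$ is \emph{bounded} there. On the boundary (the real axis), $|g(x)| = |f(x)| \leq M$. Phragmén--Lindelöf for a half-plane then forces $|g(z)| \leq M$ throughout the closed upper half-plane: the growth hypothesis of the principle (that $|g(z)| \leq A \exp(|z|^\rho)$ for some $\rho < 1$) is trivially satisfied because $g$ is bounded. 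Translating back yields $|f(x + y\sqrt{-1})| \leq M\, e^{2\pi a y}$ for $y \geq 0$.

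For the lower half-plane, I will run the symmetric argument with $\tilde g(z) = f(z)\, e^{-2\pi i a z}$. Now $|e^{-2\pi i a z}| = e^{2\pi a y}$, which is bounded by $1$ for $y \leq 0$, so once again $|\tilde g|$ is bounded on the closed lower half-plane by $C$, and bounded by $M$ on its boundary. The same Phragmén--Lindelöf step gives $|\tilde g| \leq M$, hence $|f(x + y\sqrt{-1})| \leq M\, e^{-2\pi a y} = M\, e^{2\pi a|y|}$ for $y \leq 0$. Combining the two half-plane estimates gives the claim.

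The argument is essentially routine once the right multiplicative weights $e^{\pm 2\pi i a z}$ are in place, and the only mildly subtle step is the invocation of Phragmén--Lindelöf on an unbounded region; the usual maximum modulus principle is not enough because one really does need to exclude non-trivial growth at infinity. However, since the weighted functions $g, \tilde g$ are \emph{already bounded} on the relevant half-planes (this is exactly what the hypothesis (\ref{eq: exponential type}) buys us), the growth condition in Phragmén--Lindelöf is trivially satisfied and no further delicate estimates are needed. The only bookkeeping point is to keep track of the sign of $a$ in the exponential weight to ensure decay, not growth, in each half-plane.
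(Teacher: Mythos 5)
Your proof is correct and takes a genuinely cleaner route than the paper's. You work directly with the half-plane Phragm\'en--Lindel\"of principle: after the weight $e^{\pm 2\pi i a z}$, the functions $g$ and $\tilde g$ are bounded on the relevant closed half-planes, so the sub-exponential growth hypothesis of the principle is automatic and the conclusion $|g|\le M$, $|\tilde g|\le M$ follows at once. The paper instead works with the weight $e^{2\pi(a+\varepsilon)z\sqrt{-1}}$, applies Phragm\'en--Lindel\"of on each of the two quadrants making up the upper half-plane (so the boundary includes the positive imaginary axis $L$), then uses the extra decay furnished by $\varepsilon$ to argue via the maximum modulus principle that $\norm{g}_{L^\infty(L)}\le \norm{g}_{L^\infty(\mathbb{R})}$, and finally lets $\varepsilon \to 0$. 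The $\varepsilon$-trick exists precisely to create decay along $L$ so that the maximum on $L$ is attained at an interior point and the maximum principle can be invoked; your version avoids all of this by never introducing $L$. The trade-off is only in which form of Phragm\'en--Lindel\"of one is willing to cite: you need the half-plane statement that a holomorphic function bounded on $\overline{\{y>0\}}$, continuous up to the boundary, and $\le M$ on $\mathbb{R}$ is $\le M$ throughout, whereas the paper works with the sector version for the quadrant, presumably matching the form stated in its cited reference. Both are standard; your argument is shorter and dispenses with the limiting step.
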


\begin{proof}
Let $\varepsilon>0$ and set $g(z) = e^{2\pi(a+\varepsilon)z \sqrt{-1}} f(z)$.
For $y\geq 0$ 
\[ |g(x+y\sqrt{-1})| =  e^{-2\pi(a+\varepsilon)y} |f(x+y\sqrt{-1})| \leq C e^{-2\pi\varepsilon y} \to 0 \quad 
   (y\to \infty).\]
Set $L = \{y\sqrt{-1}|\, y> 0\}\subset \mathbb{C}$.  
By the Phragm\'{e}n--Lindel\"{o}f principle (\cite[Section 3.1.7]{Dym--McKean})
\[ \sup_{y\geq 0} |g(x+y\sqrt{-1})| \leq \max\left( \norm{g}_{L^\infty(\mathbb{R})}, \, \norm{g}_{L^\infty(L)}\right).\]
If $\norm{g}_{L^\infty(\mathbb{R})}  < \norm{g}_{L^\infty(L)}$ then $|g|$ attains $\sup_{y\geq 0} |g(x+y\sqrt{-1})|$ over $L$.
But this contradicts the maximum principle. Therefore 
\[  \sup_{y\geq 0} |g(x+y\sqrt{-1})| \leq \norm{g}_{L^\infty(\mathbb{R})} = \norm{f}_{L^\infty(\mathbb{R})}.\]
Thus we get $|f(x+y\sqrt{-1})|\leq e^{2\pi(a+\varepsilon)y} \norm{f}_{L^\infty(\mathbb{R})}$.
Letting $\varepsilon \to 0$, we get the desired result for $y\geq 0$.
The case $y<0$ is similar.
\end{proof}

\begin{lemma} \label{lemma: Paley--Wiener}
Let $a>0$, and let $f:\mathbb{R}\to \mathbb{C}$ be a bounded continuous function.
The following two conditions are equivalent.

\noindent 
(1) The Fourier transform $\mathcal{F}(f)$ is supported in $[-a,a]$.

\noindent 
(2) We can extend $f$ to a holomorphic function in $\mathbb{C}$ satisfying (\ref{eq: exponential type}).
\end{lemma}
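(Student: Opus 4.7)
The plan is to prove each implication separately using complex-analytic techniques. My strategy for (2)$\Rightarrow$(1) is to shift contours in the complex plane and invoke Cauchy's theorem; for (1)$\Rightarrow$(2), I combine distribution theory with a Phragm\'en--Lindel\"of argument of the type already encountered in Lemma~\ref{lemma: exponential type}.

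For (2)$\Rightarrow$(1), it suffices to show $\langle \mathcal{F}(f), \phi \rangle = 0$ for every $\phi \in C_c^\infty(\mathbb{R})$ with $\supp \phi \subset (a, \infty)$ (the case $\supp \phi \subset (-\infty, -a)$ is symmetric). I rewrite this pairing as $\int_\mathbb{R} f(t)\mathcal{F}(\phi)(t)\,dt$ and extend $\mathcal{F}(\phi)$ to an entire function. Assuming $\supp \phi \subset [\xi_0, M]$ with $\xi_0 > a$, iterated integration by parts in $\xi$ gives, for $y \leq 0$,
\[ |\mathcal{F}(\phi)(x+y\sqrt{-1})| \leq C_n(1+|x|)^{-n}(1+|y|)^n e^{-2\pi\xi_0|y|}. \]
Together with the hypothesis $|f(x+y\sqrt{-1})|\leq Ce^{2\pi a|y|}$, the product $f(z)\mathcal{F}(\phi)(z)$ is rapidly decreasing in $x$ and decays like $e^{-2\pi(\xi_0-a)|y|}$ on horizontal lines $y<0$. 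Cauchy's theorem then lets me shift the contour from $\mathbb{R}$ to $\mathbb{R} - T\sqrt{-1}$: the vertical sides vanish as their horizontal extent tends to infinity by the $(1+|x|)^{-n}$ decay, and the shifted integral vanishes as $T\to\infty$ because $\xi_0 - a > 0$.

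For (1)$\Rightarrow$(2), the Fourier transform $\mathcal{F}(f)$ is a tempered distribution with compact support in $[-a,a]$, hence of some finite order $N$. For any $\epsilon>0$, I fix a cutoff $\chi_\epsilon \in C_c^\infty(\mathbb{R})$ with $\chi_\epsilon \equiv 1$ on $[-a,a]$ and $\supp \chi_\epsilon \subset [-a-\epsilon, a+\epsilon]$, and define
\[ F(z) := \langle \mathcal{F}(f)(\xi),\, \chi_\epsilon(\xi)\, e^{2\pi\sqrt{-1}\,z\xi}\rangle, \qquad z \in \mathbb{C}. \]
Differentiating under the pairing shows $F$ is entire, and on the real axis $F(t)=\overline{\mathcal{F}}(\chi_\epsilon\mathcal{F}(f))(t)=\overline{\mathcal{F}}(\mathcal{F}(f))(t)=f(t)$ since $\chi_\epsilon\equiv 1$ on $\supp\mathcal{F}(f)$. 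Applying the finite-order bound to the test function $\chi_\epsilon(\xi)e^{2\pi\sqrt{-1}z\xi}$ (differentiate in $\xi$ and count powers of $z$) yields
\[ |F(x+y\sqrt{-1})| \leq C_\epsilon(1+|z|)^N e^{2\pi(a+\epsilon)|y|}. \]
Fixing $\delta>\epsilon$ and studying $F(z)e^{2\pi\sqrt{-1}(a+\delta)z}$ in the upper half plane (and its analogue in the lower), I obtain an entire function bounded by $\norm{f}_{L^\infty(\mathbb{R})}$ on $\mathbb{R}$ and of polynomial growth (order $0<1$) in the half plane; a half-plane Phragm\'en--Lindel\"of principle then forces the bound $\norm{f}_{L^\infty(\mathbb{R})}$ throughout. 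This delivers $|F(x+y\sqrt{-1})| \leq \norm{f}_{L^\infty(\mathbb{R})}\, e^{2\pi(a+\delta)|y|}$, and letting $\delta\to 0$ gives (2).

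The main technical obstacle is precisely the Phragm\'en--Lindel\"of step in (1)$\Rightarrow$(2): converting a bound with a spurious polynomial factor $(1+|z|)^N$ and loose exponent $a+\epsilon$ into the sharp exponential type $a$. This requires the half-plane version of Phragm\'en--Lindel\"of valid for holomorphic functions of order strictly less than $1$ (polynomial growth has order $0$) and must be invoked in each half plane separately so the real-axis bound propagates into the interior without loss.
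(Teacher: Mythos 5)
Your proof is correct, but it takes a genuinely different route from the one the paper adopts. The paper reduces both implications to the classical $L^2$ Paley--Wiener theorem by a mollification trick: multiply $f$ by a rapidly decreasing entire function $\varphi_\varepsilon = \overline{\mathcal{F}}(\psi(\cdot/\varepsilon))/\varepsilon$ whose Fourier transform sits in $[-\varepsilon,\varepsilon]$, so that $f_\varepsilon := \varphi_\varepsilon f$ is in $L^2$; apply the $L^2$ theorem to $f_\varepsilon$; for the holomorphic extension divide back by $\varphi_\varepsilon$ (a priori a meromorphic operation, but $\varphi_\varepsilon \to 1$ locally uniformly on $\mathbb{C}$ removes the poles in the limit), and for the support statement pass to the distributional limit $\varepsilon \to 0$. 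You instead re-derive everything from scratch. For (2)$\Rightarrow$(1) you shift the contour of $\int f\,\mathcal{F}(\phi)$ downward and let Cauchy's theorem kill the pairing when $\supp\phi$ lies strictly to one side of $[-a,a]$; for (1)$\Rightarrow$(2) you observe that a compactly supported distribution has finite order, pair $\mathcal{F}(f)$ against $\chi_\varepsilon(\xi)e^{2\pi\sqrt{-1}z\xi}$ to define the entire extension directly, obtain a polynomial-times-exponential bound, and then invoke a half-plane Phragm\'en--Lindel\"of (for growth of order~$<1$) to dispose of the spurious polynomial factor and tighten the exponent. Your argument is essentially the Paley--Wiener--Schwartz approach, sharpened to recover the exact exponential type. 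What the paper's route buys is brevity and reliance on a single familiar $L^2$ black box; what your route buys is self-containment on the complex-analysis side, at the price of invoking distribution-theory structure (finite order of compactly supported distributions) and an additional Phragm\'en--Lindel\"of step. Both are sound; a small technical remark is that in your contour estimate the $(1+|y|)^n$ factor is unnecessary (integration by parts in $\xi$ produces $(2\pi\sqrt{-1}z)^{-n}$, hence a $|z|^{-n}\le(1+|x|)^{-n}$ decay directly), but the weaker bound you stated still decays fast enough for the contour shift to close, so no harm is done.
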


\begin{proof}
If we additionally assume $f\in L^2(\mathbb{R})$, then the above equivalence is a standard theorem of Paley--Wiener
\cite[Section 3.3]{Dym--McKean}.
So the problem is how to extend the Paley--Wiener theorem to the case of bounded continuous functions.
For a more general result, see Schwartz \cite[Chapter 7, Section 8]{Schwartz}.

Let $\psi(\xi)$ be a nonnegative smooth function in $\mathbb{R}$ satisfying 
\[ \supp(\psi) \subset [-1,1], \quad \int_{-1}^1 \psi(\xi) d\xi = 1.\]
Set $\varphi = \overline{\mathcal{F}}(\psi)$.
This is a rapidly decreasing function with $\varphi(0)=1$ and $|\varphi(x)|\leq 1$.
For $\varepsilon >0$ we set $\varphi_\varepsilon(x) = \varphi(\varepsilon x)$.
This satisfies $\mathcal{F}(\varphi_\varepsilon)(\xi)  = \psi(\xi/\varepsilon)/\varepsilon$.
The function $\varphi_\varepsilon$ can be extended to a holomorphic function in $\mathbb{C}$ satisfying 
$|\varphi_\varepsilon(x+y\sqrt{-1})|\leq  e^{2\pi\varepsilon |y|}$.
We have $\varphi_\varepsilon \to 1$ $(\varepsilon \to 0)$ uniformly over every compact subset of $\mathbb{C}$.
Set $f_\varepsilon(x) = \varphi_\varepsilon(x) f(x)$.
Note that $f_\varepsilon$ is a $L^2$ function.

Suppose $f$ satisfies the condition (1).
The Fourier transform $\mathcal{F}(f_\varepsilon) = \mathcal{F}(\varphi_\varepsilon) * \mathcal{F}(f)$ is 
supported in $[-a-\varepsilon, a+\varepsilon]$.
Since $f_\varepsilon \in L^2(\mathbb{R})$, we can apply to it the standard Paley--Wiener theorem (indeed this is a trivial part
of their theorem)
and conclude that 
$f_\varepsilon$ can be extended to a holomorphic function in $\mathbb{C}$ satisfying 
\[  |f_\varepsilon(x+y\sqrt{-1})| \leq e^{2\pi(a+\varepsilon)|y|} \norm{f_\varepsilon}_{L^\infty(\mathbb{R})} 
    \leq e^{2\pi(a+\varepsilon)|y|} \norm{f}_{L^\infty(\mathbb{R})} \quad 
    (\text{by Lemma \ref{lemma: exponential type} and $|\varphi_\varepsilon|\leq 1$}).\]
Then we extend $f$ to a meromorphic function in $\mathbb{C}$ by 
\[ f(z) = \varphi_\varepsilon(z)^{-1} f_\varepsilon(z)  \quad (\text{this is independent of $\varepsilon$ because of the unique continuation}). \]
Since $\varphi_\varepsilon \to 1$ uniformly over every compact subset of $\mathbb{C}$, we get 
\[ |f(x+y\sqrt{-1})| \leq e^{2\pi a|y|} \norm{f}_{L^\infty(\mathbb{R})}.\]
Thus $f$ becomes a holomorphic function satisfying (\ref{eq: exponential type}).

Next suppose $f$ satisfies (2).
Then the function $f_\varepsilon$ becomes a holomorphic function in $\mathbb{C}$ satisfying
$|f_\varepsilon(x+y\sqrt{-1})|\leq \mathrm{const} \cdot e^{2\pi(a+\varepsilon)|y|}$.
By the (difficult part of) Paley--Wiener theorem we get $\supp(\mathcal{F}(f_\varepsilon))\subset [-a-\varepsilon, a+\varepsilon]$.
The functions $f_\varepsilon$ converge to $f$ in the sense of tempered distributions as $\varepsilon \to 0$.
Thus $f$ satisfies $\supp(\mathcal{F}(f))\subset [-a,a]$.
\end{proof}

\begin{lemma} \label{lemma: compactness of B_1(V[a,b])}
Let $a<b$.
The space $B_1(V[a,b])$ introduced in Section \ref{subsection: embedding via signal processing}
is compact with respect to the distance $\boldsymbol{d}$ in (\ref{eq: distance on V[a,b]}).
\end{lemma}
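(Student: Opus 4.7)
The plan is to prove sequential compactness by the Arzelà--Ascoli theorem applied on every bounded interval, combined with a diagonal extraction, and then to verify that the limit point stays in $B_1(V[a,b])$. First I would observe that the metric $\boldsymbol{d}$ induces exactly the topology of uniform convergence on compact subsets of $\mathbb{R}$: convergence in $\boldsymbol{d}$ is equivalent to $\|\varphi_k-\varphi\|_{L^\infty([-n,n])}\to 0$ for every $n\geq 1$. So it suffices to show that every sequence $(\varphi_k)\subset B_1(V[a,b])$ has a subsequence converging uniformly on each $[-n,n]$ to some $\varphi\in B_1(V[a,b])$.

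The uniform boundedness $|\varphi_k(t)|\leq 1$ is built in. For equicontinuity, the key observation is a Bernstein-type derivative bound. Write $c=(a+b)/2$ and $r=(b-a)/2$, and set $\psi_k(t)=e^{-2\pi\sqrt{-1}ct}\varphi_k(t)$. Then $\supp\mathcal{F}(\psi_k)\subset[-r,r]$ and $\|\psi_k\|_{L^\infty(\mathbb{R})}\leq 1$, so Lemma \ref{lemma: Paley--Wiener} extends $\psi_k$ to an entire function with $|\psi_k(x+\sqrt{-1}y)|\leq e^{2\pi r|y|}$. Applying Cauchy's integral formula on a disk of radius $\rho=1/(2\pi r)$ centered at any real $t$ yields a uniform bound $|\psi_k'(t)|\leq 2\pi r\cdot e$, and hence
\[
|\varphi_k'(t)|\leq 2\pi|c|+2\pi r e=:M,
\]
independently of $k$ and $t$. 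This gives uniform Lipschitz continuity of all $\varphi_k$, which is far more than needed for equicontinuity. Arzelà--Ascoli then yields a subsequence converging uniformly on $[-1,1]$; iterating on $[-n,n]$ and diagonalising produces a subsequence $\varphi_{k_j}$ converging uniformly on every compact subset of $\mathbb{R}$ to some continuous function $\varphi$ with $\|\varphi\|_{L^\infty(\mathbb{R})}\leq 1$.

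It remains to show $\varphi\in V[a,b]$, i.e.\ $\supp\mathcal{F}(\varphi)\subset[a,b]$. Because the $\varphi_{k_j}$ are uniformly bounded by $1$ and converge to $\varphi$ uniformly on compacta, they converge to $\varphi$ in the sense of tempered distributions: for any Schwartz function $\phi$, dominated convergence gives $\int\varphi_{k_j}\phi\,dt\to\int\varphi\,\phi\,dt$. Since $\mathcal{F}$ is continuous on $\mathcal{S}'(\mathbb{R})$, also $\mathcal{F}(\varphi_{k_j})\to\mathcal{F}(\varphi)$ in $\mathcal{S}'$. For any Schwartz $\phi$ with $\supp(\phi)\cap[a,b]=\emptyset$ we have $\langle\mathcal{F}(\varphi_{k_j}),\phi\rangle=0$ for all $j$, so $\langle\mathcal{F}(\varphi),\phi\rangle=0$, which is the definition of $\supp\mathcal{F}(\varphi)\subset[a,b]$. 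Thus $\varphi\in B_1(V[a,b])$, completing sequential compactness; since $B_1(V[a,b])$ is a metric space this is equivalent to compactness.

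The main technical step is the equicontinuity estimate, where one must leverage the band-limited condition to obtain a uniform derivative bound; Lemma \ref{lemma: Paley--Wiener} combined with a frequency-shift to symmetrise the support and a one-line Cauchy formula argument handles this cleanly. Verifying that the limit is band-limited is essentially automatic from the distributional continuity of $\mathcal{F}$, but it is worth being explicit that uniform-on-compacta plus uniformly bounded implies distributional convergence, since this is the only place where the uniform bound $\|\varphi_k\|_{L^\infty(\mathbb{R})}\leq 1$ (as opposed to just on intervals) is used.
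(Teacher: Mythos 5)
Your proof is correct, and it takes a mildly different route from the paper's. The paper also starts from the holomorphic extension with the exponential-type bound $|f(x+y\sqrt{-1})|\leq e^{c|y|}$ (via Lemmas \ref{lemma: exponential type} and \ref{lemma: Paley--Wiener}) but then invokes Montel's normal-family theorem on $\mathbb{C}$ directly: the family is uniformly bounded on compact subsets of the plane, so a subsequence converges locally uniformly to a holomorphic limit. You instead stay on the real line, using the holomorphic extension only to extract a uniform Bernstein-type derivative bound via Cauchy's formula (your frequency shift to symmetrise the band and the choice $\rho=1/(2\pi r)$ are both fine, noting $r>0$ since $a<b$; strictly you are also using Lemma \ref{lemma: exponential type} to get the clean constant $\norm{\psi_k}_{L^\infty(\mathbb{R})}\leq 1$ in the exponential-type bound, as the paper does), and then apply Arzel\`a--Ascoli plus a diagonal argument. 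Since Montel's theorem is itself Cauchy estimates plus Arzel\`a--Ascoli, the two arguments are cousins; yours is slightly more hands-on and quantitative, the paper's slightly shorter. One genuine value-added in your write-up is the explicit verification that the limit lies in $B_1(V[a,b])$: the passage from locally uniform convergence with a global $L^\infty$ bound to convergence in $\mathcal{S}'$, and hence to $\supp\mathcal{F}(\varphi)\subset[a,b]$, is exactly the right argument and is left implicit in the paper's one-line appeal to normal families.
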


\begin{proof}
First note that a sequence $\{f_n\}\subset B_1(V[a,b])$ converges to $f$ in the distance $\boldsymbol{d}$ if and only if 
 for any compact subset 
$K\subset \mathbb{R}$
\[ \lim_{n\to \infty} \norm{f_n-f}_{L^\infty(K)} = 0.\]
Set $c= 2\pi \max(|a|,|b|)$.
By Lemmas \ref{lemma: exponential type} and \ref{lemma: Paley--Wiener},
every $f\in B_1(V[a,b])$ can be extended to a holomorphic function in $\mathbb{C}$ satisfying 
\[ \forall x,y\in \mathbb{R}: \> |f(x+y\sqrt{-1})| \leq e^{c|y|}.\]
Then the compactness of $B_1(V[a,b])$ follows from the standard normal family argument
(Ahlfors \cite[Chapter 5, Section 5.4]{Ahlfors}):
If $\{f_n\}$ is a sequence of holomorphic functions in $\mathbb{C}$ uniformly bounded over every compact subset of $\mathbb{C}$, 
then a suitable subsequence converges to a holomorphic function uniformly over every compact subset.
\end{proof}

\begin{lemma}[Sampling theorem] \label{lemma: sampling theorem}
Let $a$ and $d$ be positive numbers with $2a d < 1$. 
Set $\Lambda = d \mathbb{Z}\subset \mathbb{R}$.
Then the following map is injective:
\[  V[-a,a] \to \ell^\infty(\Lambda), \quad f\mapsto f|_{\Lambda} =  (f(\lambda))_{\lambda\in \Lambda}.\]
Note that this statement is optimal because the function $\sin (2\pi x)$ belongs to $V[-1,1]$ and vanishes over $(1/2)\mathbb{Z}$.
\end{lemma}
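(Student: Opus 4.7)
The plan is to prove injectivity by complex analysis: suppose $f\in V[-a,a]$ satisfies $f(nd)=0$ for every $n\in\mathbb{Z}$, and show $f\equiv 0$. By Lemmas \ref{lemma: exponential type} and \ref{lemma: Paley--Wiener}, $f$ extends to an entire function with $|f(x+y\sqrt{-1})|\leq e^{2\pi a|y|}\norm{f}_{L^\infty(\mathbb{R})}$. The key observation is that the entire function $\sin(\pi z/d)$ has simple zeros precisely at the points of $\Lambda=d\mathbb{Z}$, so the quotient $g(z):=f(z)/\sin(\pi z/d)$ has only removable singularities at $\Lambda$ and defines an entire function. The goal is to prove that $g$ is bounded on $\mathbb{C}$ and decays as $|\Im z|\to\infty$; Liouville's theorem then forces $g\equiv 0$ and hence $f\equiv 0$.

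The crux is bounding $g$ near the zeros. On the circle $|z-nd|=d/2$, substituting $w=(z-nd)/d$ and using the identity $|\sin(\pi w)|^2=\sin^2(\pi\Re w)+\sinh^2(\pi\Im w)$ on $|w|=1/2$, one checks that the minimum is attained at $w=\pm 1/2$ with value $\sin^2(\pi/2)=1$, so $|\sin(\pi z/d)|\geq 1$ on the circle. Combined with the exponential-type bound for $f$, this gives $|g(z)|\leq e^{\pi a d}\norm{f}_{L^\infty(\mathbb{R})}$ on that circle, and the maximum modulus principle propagates the bound to the closed disk $|z-nd|\leq d/2$. For the rest of the plane, outside these disks, $|\sin(\pi z/d)|\geq|\sinh(\pi\Im z/d)|$ is bounded below explicitly: in particular for $|\Im z|\geq d$ one has $|\sin(\pi z/d)|\geq(1/4)e^{\pi|\Im z|/d}$, and the hypothesis $2ad<1$ yields $|g(z)|\leq 4e^{-(\pi/d-2\pi a)|\Im z|}\norm{f}_{L^\infty(\mathbb{R})}\to 0$ as $|\Im z|\to\infty$. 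Together these estimates exhibit $g$ as a bounded entire function that vanishes at imaginary infinity, so Liouville's theorem completes the argument.

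The main obstacle is precisely that $g$ could a priori be very large on the real axis near each zero $nd$, since the denominator vanishes there. A pointwise estimate cannot control this without an a priori bound on $|f'(nd)|$, which the hypotheses do not provide (we know only that $f$ is bounded and band-limited). The trick that sidesteps any Bernstein-type derivative inequality is to evaluate $g$ not at $nd$ itself but on the contour $|z-nd|=d/2$, where the explicit lower bound $|\sin(\pi z/d)|\geq 1$ is easy to verify, and then to invoke the maximum modulus principle to descend into the disk.
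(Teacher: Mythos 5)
Your approach is genuinely different from the paper's. The paper assumes a nonzero $f\in V[-a,a]$ vanishing on $d\mathbb{Z}$ and applies Nevanlinna's first main theorem (a Jensen-type inequality): the zeros force the counted integral $\int_1^r n(t)\,dt/t$ to grow like $2r/d$, while the Paley--Wiener growth bound caps the right-hand side at $4ar+O(1)$, giving $2ad\ge 1$. You instead form $g=f/\sin(\pi z/d)$ and combine the maximum modulus principle with Liouville's theorem; this avoids value-distribution theory entirely and even produces a quantitative bound $\sup_{\mathbb{C}}|g|\le C\norm{f}_{L^\infty(\mathbb{R})}$. Your circle estimate is correct: writing $w=u+v\sqrt{-1}$ with $|w|=1/2$ and using $\sin(\pi|u|)\ge 2|u|$ for $|u|\le 1/2$ and $\sinh(\pi|v|)\ge \pi|v|\ge 2|v|$, one gets $|\sin(\pi w)|^2=\sin^2(\pi u)+\sinh^2(\pi v)\ge 4(u^2+v^2)=1$.

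There is, however, one step that fails as written: the claim that "outside these disks" the bound $|\sin(\pi z/d)|\ge|\sinh(\pi y/d)|$ (with $z=x+y\sqrt{-1}$) is "bounded below explicitly." The disks $|z-nd|\le d/2$ do not cover the strip $|y|<d$: the point $z=(n+\tfrac12)d+\epsilon\sqrt{-1}$ lies strictly outside every disk for every small $\epsilon>0$, and there $\sinh(\pi\epsilon/d)$ is arbitrarily small, so neither the boundedness of $g$ nor any decay is established in the gaps between consecutive disks near the real axis; Liouville cannot yet be invoked. The repair is the same elementary inequality you used on the circles: letting $\delta(x)=\dist(x,d\mathbb{Z})\le d/2$, one has $|\sin(\pi z/d)|^2=\sin^2(\pi\delta(x)/d)+\sinh^2(\pi y/d)\ge (4/d^2)\bigl(\delta(x)^2+y^2\bigr)=(4/d^2)\dist(z,d\mathbb{Z})^2$, which exceeds $1$ whenever $\dist(z,d\mathbb{Z})>d/2$. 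With this, $|g(z)|\le e^{2\pi ad}\norm{f}_{L^\infty(\mathbb{R})}$ everywhere off the disks in the strip $|y|\le d$, and the remainder of your argument (decay for $|y|\ge d$ from $2ad<1$, then Liouville forcing $g\equiv 0$) goes through.
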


\begin{proof}
Suppose there exists a nonzero $f\in V[-a,a]$ satisfying $f|_{\Lambda}=0$.
By the first main theorem of Nevanlinna (Hayman \cite[Section 1.3]{Hayman} and 
Noguchi--Winkelmann \cite[Section 1.1]{Noguchi--Winkelmann}), for $r>1$
\begin{equation} \label{eq: first main theorem}
 \int_1^r \#\{z\in \mathbb{C}|\, |z|\leq t, f(z)=0\} \frac{dt}{t} \leq \frac{1}{2\pi}\int_0^{2\pi} \log^+ |f(re^{\theta\sqrt{-1}})|d\theta
   + \mathrm{const} .
\end{equation}
Here $\log^+ x = \max(0, \log x)$.
From $f|_\Lambda =0$, the left-hand side is bounded from below by 
\[ \int_1^r \frac{2t}{d} \frac{dt}{t} + O(\log r) = \frac{2r}{d} + O(\log r).\]
From Lemma \ref{lemma: Paley--Wiener}, 
\[ \log^+ |f(re^{\sqrt{-1}\theta})| \leq  2\pi a r |\sin\theta|  + \mathrm{const}.\]
Hence the right-hand side of (\ref{eq: first main theorem}) is bounded by 
\[ a r\int_0^{2\pi} |\sin\theta|d\theta + \mathrm{const} = 4a r + \mathrm{const}.\]
Thus 
\[ \frac{2r}{d} \leq 4a r + O(\log r). \]
Letting $r\to \infty$, we get $2ad \geq 1$, which contradicts the assumption.
\end{proof}

\section{Technical main theorem and the proof of Theorem \ref{thm: embed minimal systems into continuous signals}}  
\label{section: embedding extensions of nontrivial minimal systems}

Here we formulate Theorem \ref{thm: technical main theorem}, which is technically the most important result of the paper.
Theorem \ref{thm: embed minimal systems into continuous signals} in Section \ref{subsection: embedding via signal processing}
follows from this.
The proof of Theorem \ref{thm: technical main theorem} occupies all the rest of the paper.
In Section \ref{subsection: ideas of the proof} we explains the ideas of the proof.

\subsection{Technical main theorem}  \label{subsection: technical main theorem}

Let $(Y,S)$ be a dynamical system.
$(Y,S)$ is said to have the \textbf{marker property}
if for any natural number $N$ there exists an open set $U\subset Y$ satisfying 
\[  U\cap S^n U= \emptyset \quad (\forall 1\leq n\leq N), \quad 
    Y = \bigcup_{n\in \mathbb{Z}} S^n U. \]
It is known that large classes of dynamical systems satisfy this condition.    
Nontrivial minimal systems obviously have the marker property.
If $(Y,S)$ is a finite dimensional system having no periodic points, then it satisfies the marker property; 
see \cite[Theorem 6.1]{Gutman 2}.
The authors don't know an example of dynamical systems which have no periodic points but don't have the marker property.

Let $(X,T)$ and $(Y,S)$ be dynamical systems with 
a continuous surjection $\Phi:X\to Y$ satisfying $\Phi\circ T = S\circ \Phi$.
We call $(X,T)$ an \textbf{extension} of $(Y,S)$, and $(Y,S)$ a \textbf{factor} of $(X,T)$.
Take two real numbers $a<b$.
We denote by $C_T(X, B_1(V[a,b]))$ the space of continuous maps $f:X\to B_1(V[a,b])$ satisfying 
$f\circ T = \sigma \circ f$.
Here the topology of $B_1(V[a,b])$ is given by the distance $\boldsymbol{d}$ in (\ref{eq: distance on V[a,b]}).
Note that $C_T(X,B_1(V[a,b]))$ is always nonempty because it contains the trivial map $f(x) =0$.
The space $C_T(X,B_1(V[a,b]))$ becomes a complete metric space with respect to the uniform distance 
\[ \sup_{x\in X} \boldsymbol{d}(f(x), g(x)), \quad (f, g\in C_T(X, B_1(V[a,b]))).\]

\begin{theorem}  \label{thm: technical main theorem}
Under the above settings, suppose that $\mdim(X,T)<b-a$ and
$(Y,S)$ has the marker property.
Then for a dense $G_\delta$ subset of $f\in C_T(X, B_1(V[a,b]))$ the map 
\[  (f,\Phi):  X\to B_1(V[a,b])\times Y, \quad x\mapsto (f(x), \Phi(x)) \]
is an embedding.
\end{theorem}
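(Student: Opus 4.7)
I would argue by Baire category on the complete metric space $C_T(X,B_1(V[a,b]))$ with its uniform distance. Fix a compatible metric $d$ on $X$. For each $\varepsilon>0$, let $\mathcal{U}_\varepsilon$ be the set of $f$ for which $(f,\Phi):X\to B_1(V[a,b])\times Y$ is an $\varepsilon$-embedding with respect to $d$. Since $X$ is compact, being an $\varepsilon$-embedding is an open condition, so each $\mathcal{U}_\varepsilon$ is open, and the set of genuine embeddings equals $\bigcap_{k\geq 1}\mathcal{U}_{1/k}$. It therefore suffices to show that each $\mathcal{U}_\varepsilon$ is dense.

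To prove density, fix $f_0\in C_T(X,B_1(V[a,b]))$ and $\varepsilon,\delta>0$; I seek $f\in\mathcal{U}_\varepsilon$ within uniform distance $\delta$ of $f_0$. Pick $\varepsilon'\ll\varepsilon$ and, using $\mdim(X,T)<b-a$, choose $N$ large and an $\varepsilon'$-embedding $\pi:X\to P$ with respect to $d_N$, where $P$ is a finite simplicial complex with $\dim P<(b-a)N$. Next, apply the marker property of $(Y,S)$: select open $U\subset Y$ with $U\cap S^nU=\emptyset$ for $1\leq n\leq N$ and $Y=\bigcup_{n\in\mathbb{Z}}S^nU$. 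Pulling back by $\Phi$, each $x\in X$ carries a canonical bi-infinite sequence of return times $\cdots<t_{-1}(x)<t_0(x)\leq 0<t_1(x)<\cdots$ to $\Phi^{-1}(U)$, with consecutive gaps $L_i(x):=t_{i+1}(x)-t_i(x)\geq N$ and varying upper-semicontinuously in $x$.

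The core step is to perturb $f_0$ by an equivariant band-limited signal that encodes the data of $\pi$ block-by-block. Heuristically, on a segment of length $L$ the space $V[a,b]$ supplies about $(b-a)L$ independent real parameters, by Lemma \ref{lemma: Paley--Wiener} and the sampling principle behind Lemma \ref{lemma: sampling theorem}. Since $\dim P<(b-a)N\leq (b-a)L_i(x)$, for each block $[t_i(x),t_{i+1}(x))$ one can assign a band-limited waveform $\psi_x^{(i)}\in V[a,b]$ whose Paley--Wiener mass is approximately concentrated in that block and whose values encode $\pi(T^{t_i(x)}x)$. Assembling the $\psi_x^{(i)}$ equivariantly in $x$ (using the "Voronoi diagram in one-dimension-higher space" mentioned in the acknowledgments to partition responsibility among marker blocks), summing, and rescaling by a small factor $\lambda>0$ yields a perturbation of $f_0$ lying in $B_1(V[a,b])$ within uniform distance $\delta$ while preserving $f\circ T=\sigma\circ f$. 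Verification: if $(f(x),\Phi(x))=(f(x'),\Phi(x'))$, then $\Phi(x)=\Phi(x')$ forces $t_i(x)=t_i(x')$ for all $i$, and sampling on a sufficiently dense lattice recovers the encoded coordinates, so $\pi(T^{t_i(x)}x)=\pi(T^{t_i(x')}x')$ for each $i$; this together with the $\varepsilon'$-embedding property of $\pi$ with respect to $d_N$ forces $d(x,x')<\varepsilon$.

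\textbf{The main obstacle} is the block-localized encoding itself. Band-limited functions cannot have compact support, so functions associated to distant marker blocks leak into one another; this interference must be quantitatively controlled below the decoding threshold. Moreover, to achieve the optimal constant $b-a$ (rather than a fraction as in Lindenstrauss's $N/36$) one must use essentially the full Nyquist rate, which requires sharp complex-analytic interpolation estimates in $V[a,b]$ (via Lemmas \ref{lemma: exponential type}--\ref{lemma: sampling theorem}), together with continuous, equivariant dependence on $x$ through the combinatorics of the return times $t_i(x)$. This is where Fourier and complex analysis enter decisively, and where the bulk of the technical work is bound to concentrate.
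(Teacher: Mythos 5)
Your skeleton (Baire category, open--dense sets of $\varepsilon$-embeddings, marker property, block-by-block encoding of an $\varepsilon$-embedding $\pi:X\to P$ into band-limited signals, sharp sampling) is the paper's strategy, but the proposal defers exactly the steps where the proof lives, and at least two of them are genuine gaps rather than routine technicalities. The block structure you use --- return times $t_i(x)$ to $\Phi^{-1}(U)$ --- is only semicontinuous in $x$, as you yourself note; unless $U$ can be taken clopen (which requires $Y$ zero-dimensional), the resulting perturbed map will not be continuous. The paper's fix is to replace return times by the trace on $\mathbb{R}\times\{0\}$ of a Voronoi diagram in the plane built from the points $(n,1/h(S^nx))$ for a continuous $h$, yielding a tiling $\mathbb{R}=\bigcup_n I(x,n)$ that varies continuously in $x$. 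But this fix destroys your key inequality: Voronoi intervals can be arbitrarily short, so on some blocks there is no room to encode $\pi$ at all and the decoding argument fails near those blocks. Handling this requires the ``weight''/tax construction (Lemma \ref{lemma: construction of weight}), which redistributes surplus capacity from long intervals to care for integer points near short ones via an auxiliary cone $CQ$; nothing in your sketch plays this role.

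Second, even on long intervals, since the endpoints of $I(x,n)$ move continuously, the integer offsets used to align a perturbation map with a block jump discontinuously; attaching a single fixed encoding map per block is therefore not continuous in $x$. The paper resolves this with a family $F_{n,r}$ built by successive perturbation so that all convex combinations $(1-c)F_{n,r}+cF_{n,r+1}$ remain $\varepsilon'$-embeddings (Lemma \ref{lemma: successive perturbation}), combined with averaging over probability measures $\mu_{x,n}$. A smaller but consequential point: your heuristic ``$(b-a)L$ independent real parameters on a segment of length $L$'' undercounts by a factor of two --- the signals are complex-valued and the correct figure is $2(b-a)L$ --- and that factor of two is precisely what makes the optimal constant $b-a$ (hence $N/2$) reachable instead of half of it. By contrast, the inter-block leakage you identify as the main obstacle is the comparatively routine part: it is controlled by the rapid decay of the Beurling-type interpolation products $\varphi_\Lambda$ and the choice of $\delta'$.
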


Every dense $G_\delta$ subset of $C_T(X, B_1(V[a,b]))$ is non-empty by the Baire category theorem.
So the theorem implies that $(X,T)$ can be embedded into the product system $B_1(V[a,b])\times Y$.

\subsection{Proof of Theorem \ref{thm: embed minimal systems into continuous signals}} 
\label{subsection: proof of continuous signal version of the main theorem}

Let $a<b$ be real numbers throughout this subsection.

\begin{lemma}  \label{lemma: existence of nontrivial map}
Let $(X,T)$ be a dynamical system with a non-periodic point $p\in X$. 
Then for a dense $G_\delta$ subset of $f\in C_T(X,B_1(V[a,b]))$ 
the function $f(p)$ is not shift-periodic, i.e. for any nonzero integer $n$ there exists a real number $t$ 
satisfying $f(p)(t+n) \neq f(p)(t)$.
\end{lemma}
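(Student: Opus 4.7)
The plan is to apply the Baire category theorem to the sets
\begin{equation*}
A_n := \{f \in C_T(X, B_1(V[a,b])) : \sigma^n f(p) = f(p)\}, \qquad n \in \mathbb{Z}\setminus\{0\}.
\end{equation*}
The function $f(p)$ is shift-periodic with some period $n$ iff $f \in A_n$ for that $n$, so the desired dense $G_\delta$ set is $\bigcap_{n\ne 0} A_n^c$. Closedness of each $A_n$ is immediate from continuity of evaluation $f\mapsto f(p)$ and of the shift $\sigma^n$ on $B_1(V[a,b])$; everything reduces to showing density of $A_n^c$, i.e., that any $f \in A_n$ can be perturbed slightly to leave $A_n$.

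For the perturbation I plan to use a convolution construction. Fix a nonzero $\psi \in C_c^\infty([a,b])$ and set $\theta := \overline{\mathcal{F}}(\psi)$, which lies in $V[a,b]$, is of Schwartz class, and is in particular not $n$-periodic for any $n\ne 0$ (a nonzero periodic function cannot decay at infinity). Because $p$ is non-periodic, for each $N$ there is an open neighborhood $W_N$ of $p$ with $T^k W_N \cap W_N = \emptyset$ for $1 \le |k| \le N$; by Urysohn I pick a continuous $\chi_N : X \to [0,1]$ supported in $W_N$ with $\chi_N(p) = 1$, so that $\chi_N(T^k p) = 0$ for $1 \le |k| \le N$. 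Define
\begin{equation*}
g_N(x)(t) := \sum_{k\in\mathbb{Z}} \chi_N(T^k x)\,\theta(t-k).
\end{equation*}
Schwartz decay of $\theta$ makes the series converge absolutely with uniform bound $M := \sup_t\sum_k|\theta(t-k)| < \infty$; each partial sum lies in $V[a,b]$ and so does the limit (uniform-on-compacts convergence preserves the Fourier support condition); and the identity $g_N(Tx) = \sigma g_N(x)$ follows by reindexing. Hence $g_N \in C_T(X, V[a,b])$ with $\|g_N(x)\|_{L^\infty(\mathbb{R})}\le M$.

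The point of the localization built into $\chi_N$ is that the $k=0$ contribution dominates:
\begin{equation*}
g_N(p)(t+n) - g_N(p)(t) = \bigl[\theta(t+n) - \theta(t)\bigr] + R_N(t),
\end{equation*}
where $R_N(t) = \sum_{|k|>N}\chi_N(T^k p)\,[\theta(t+n-k)-\theta(t-k)]$ tends to $0$ uniformly on compact $t$-sets as $N\to\infty$. Since $\theta(\cdot+n) - \theta(\cdot)\not\equiv 0$, for $N$ sufficiently large $\sigma^n g_N(p) \ne g_N(p)$. Then given $f \in A_n$ and $\varepsilon > 0$, I take the convex combination $\tilde f := (1-\delta)f + (\delta/M)g_N$ for $\delta < \varepsilon/2$: this lies in $C_T(X, B_1(V[a,b]))$ by convexity, is within $2\delta < \varepsilon$ of $f$ in the supremum-$\boldsymbol{d}$ metric (by homogeneity and translation-invariance of $\boldsymbol{d}$ together with the bound $\boldsymbol{d}(\cdot,0)\le 1$ on $B_1(V[a,b])$), and satisfies
\begin{equation*}
\sigma^n \tilde f(p) - \tilde f(p) = (\delta/M)\bigl(\sigma^n g_N(p) - g_N(p)\bigr) \ne 0
\end{equation*}
because $\sigma^n f(p) = f(p)$. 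So $\tilde f \in A_n^c$, establishing density.

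The main delicate point I anticipate is verifying that $g_N$ genuinely lands in $V[a,b]$ and depends continuously on $x$, and that the tail $R_N$ really is uniformly small on compact $t$-sets; all three come down to Schwartz decay of $\theta$ and are routine. Everything else is standard Baire category bookkeeping.
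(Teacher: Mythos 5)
Your proposal is correct and follows essentially the same approach as the paper: both perturb $f$ by a convex combination with an equivariant map of the form $x\mapsto\sum_{k\in\mathbb{Z}}\chi(T^kx)\,\varphi(\cdot-k)$, where $\chi$ is a bump function localizing near the non-periodic point $p$ and $\varphi$ is a rapidly decaying band-limited function obtained as an inverse Fourier transform of a compactly supported smooth function. The paper derives the failure of $n$-periodicity from explicit numerical bounds $g(p)(0)>1/2>g(p)(nN)$ (taking $N$ large relative to the decay constant of $\varphi$), whereas you invoke the abstract fact that a nonzero Schwartz function cannot be periodic and let the tail $R_N$ go to zero; the underlying mechanism is identical.
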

\begin{proof}
We show that the following set is open and dense for any natural number $n$:
\[ \{ f\in C_T(X,B_1(V[a,b]))|\, f(p) \neq f(T^n p)\}.\]
This is obviously open. So it is enough to show its density.
Take any $f\in C_T(X, B_1(V[a,b]))$ and suppose $f(p) = f(T^n p)$.

Let $\psi(\xi)$ be a nonnegative smooth function in $\mathbb{R}$ satisfying 
\[ \supp (\psi) \subset \left[ -\frac{b-a}{2}, \frac{b-a}{2}\right], \quad \int_{-\infty}^\infty \psi(\xi) d\xi = 1.\]
Define $\varphi\in V[a,b]$ by 
\[ \varphi(t) = \exp\left(2\pi\sqrt{-1}\left(\frac{a+b}{2}\right) t \right) \overline{\mathcal{F}}(\psi)(t).\]
This satisfies $|\varphi(t)|\leq 1$ and $\varphi(0) = 1$.
The function $\varphi$ is rapidly decreasing. In particular we can find $K>0$ such that $\varphi(t) \leq K/(1+t^2)$.
Take a sufficiently large natural number $N=N(K)$.
Since $p\in X$ is not periodic, there exists an open neighborhood $U\subset X$ of $p$ satisfying 
$U\cap T^k U = \emptyset$ for $1\leq k\leq 2nN$.
Let $\alpha:X\to [0,1]$ be a continuous function satisfying 
$\supp(\alpha) \subset U$ and $\alpha(p)=1$.
For $x\in X$ we define $g(x)\in V[a,b]$ by 
\[ g(x)(t) = \sum_{k\in \mathbb{Z}} \alpha(T^k x) \varphi(t-k).\]
This is equivariant: $g(Tx)(t) = g(x)(t+1)$.
Since $N\gg 1$ we can assume $|g(x)(t)|\leq 2$, $g(p)(0) > 1/2$ and $g(p)(nN) < 1/2$.

Let $\varepsilon$ be a small positive number. We define $h\in C_T(X, B_1(V[a,b]))$ by 
\[ h(x) = \left(1-\frac{\varepsilon}{2}\right)f(x) + \frac{\varepsilon}{4} g(x).\]
This satisfies $|h(x)-f(x)|\leq \varepsilon$.  
\[ h(p)(nN) =  \left(1-\frac{\varepsilon}{2}\right)f(p)(nN) + \frac{\varepsilon}{4} g(p)(nN) 
   =   \left(1-\frac{\varepsilon}{2}\right)f(p)(0) + \frac{\varepsilon}{4} g(p)(nN) .\]
Since $g(p)(0) > 1/2$ and $g(p)(nN) < 1/2$, this is smaller than 
\[  h(p)(0) =  \left(1-\frac{\varepsilon}{2}\right)f(p)(0) + \frac{\varepsilon}{4} g(p)(0) .\]
Thus $h(T^{nN}p) \neq h(p)$. In particular $h(T^n p) \neq h(p)$.
\end{proof}

Theorem \ref{thm: embed minimal systems into continuous signals} is a special case of the next corollary.

\begin{corollary}  \label{cor: embed extensions of minimal systems into continuous signals}
If $(X,T)$ is an extension of a nontrivial minimal system with 
\[   \mdim(X,T)<b-a, \]
then we can embed it into the shift on $B_1(V[a,b])$.
\end{corollary}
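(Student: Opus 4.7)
The plan is to apply Theorem~\ref{thm: technical main theorem} to the given extension $\Phi\colon(X,T)\to(Y,S)$, and then to strengthen the resulting auxiliary embedding $(f,\Phi)\colon X\to B_1(V[a,b])\times Y$ to an honest embedding $f\colon X\hookrightarrow B_1(V[a,b])$ by intersecting with the genericity sets furnished by Lemma~\ref{lemma: existence of nontrivial map}.

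First I would verify the structural hypotheses. Because $(Y,S)$ is nontrivial minimal, it has the marker property and no periodic points. Both features lift to $(X,T)$ through $\Phi$: any marker set $U\subset Y$ pulls back to a marker set $\Phi^{-1}(U)\subset X$, since $\Phi^{-1}(U)\cap T^n\Phi^{-1}(U)=\Phi^{-1}(U\cap S^n U)$ and $\bigcup_{n\in\mathbb{Z}}T^n\Phi^{-1}(U)=\Phi^{-1}(Y)=X$; and any periodic point of $(X,T)$ would project to a periodic point of $(Y,S)$, which is impossible. Hence every point of $X$ is non-periodic, and Lemma~\ref{lemma: existence of nontrivial map} applies at every such point.

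Second, I would form the desired $G_\delta$ intersection. Theorem~\ref{thm: technical main theorem} provides a dense $G_\delta$ subset $\mathcal{A}\subset C_T(X,B_1(V[a,b]))$ of $f$'s for which $(f,\Phi)$ is an embedding. Pick a countable dense subset $D\subset X$; Lemma~\ref{lemma: existence of nontrivial map} then gives, for each $p\in D$, a dense $G_\delta$ set $\mathcal{B}_p$ of $f$'s for which $f(p)$ is not shift-periodic. Set $\mathcal{B}=\bigcap_{p\in D}\mathcal{B}_p$. By the Baire category theorem $\mathcal{A}\cap\mathcal{B}$ is a non-empty dense $G_\delta$, so in particular contains some $f$.

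The claim is that any such $f\in\mathcal{A}\cap\mathcal{B}$ is already a topological embedding of $X$ into $B_1(V[a,b])$. By the compactness of $X$ it suffices to show that $f$ is injective. Assume for contradiction that $f(x_1)=f(x_2)$ with $x_1\neq x_2$; since $(f,\Phi)$ is an embedding, necessarily $\Phi(x_1)\neq\Phi(x_2)$, and by $T$-equivariance $f(T^n x_1)=f(T^n x_2)$ for all $n\in\mathbb{Z}$. Using the minimality of $(Y,S)$ I would choose $n_k$ with $S^{n_k}\Phi(x_1)\to\Phi(x_2)$, pass to subsequences so that $T^{n_k}x_i\to z_i$ in $X$, observe $f(z_1)=f(z_2)$, and analyse the projected orbit closure of $(x_1,x_2)$ in $Y\times Y$. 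Combining this recurrence with the non-shift-periodicity of $f$ at the dense set $D$ should force $x_1=x_2$.

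The main obstacle is precisely this last step: ruling out non-diagonal $f$-equal pairs requires a unified treatment of the proximal and distal configurations that the projected orbit of $(x_1,x_2)$ in $Y\times Y$ may realise. The distal case, in which $\inf_n d_Y(S^n\Phi(x_1),S^n\Phi(x_2))>0$, is the subtlest point: one must propagate the genericity afforded by $\mathcal{B}$ at a countable dense set of reference points so as to exclude the non-trivial closed $(S\times S)$-invariant subset of $Y\times Y$ that such a pair would otherwise generate.
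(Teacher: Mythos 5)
There is a genuine gap. Your plan is to apply Theorem~\ref{thm: technical main theorem} to get that $(f,\Phi)$ is an embedding for generic $f$, and then to discard the $Y$-coordinate by showing that a generic $f$ is \emph{already} injective. But the only tool you bring to bear on the separation step is Lemma~\ref{lemma: existence of nontrivial map}, which says that $f(p)$ is not shift-periodic for $p$ in a countable dense set; this controls the behaviour of $f$ along a single orbit and gives no mechanism for separating two distinct points $x_1\neq x_2$ with $\Phi(x_1)\neq\Phi(x_2)$. You acknowledge this yourself: the argument ends with ``should force $x_1=x_2$'' and then a description of the unresolved distal case. What you are really trying to prove is the stronger statement that a generic $f\in C_T(X,B_1(V[a,b]))$ is itself an embedding, which is not what Theorem~\ref{thm: technical main theorem} delivers and is not established anywhere by the machinery you invoke.

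The paper's proof avoids this entirely by the \emph{disjoint bands} trick (see Remark~\ref{remark: why we use continuous signals}). Choose $a<c_1<c_2<b$ with $\mdim(X,T)<c_1-a$. Lemma~\ref{lemma: existence of nontrivial map}, applied to the nontrivial minimal factor $(Y,S)$, produces an equivariant continuous $g:Y\to B_1(V[c_2,b])$ whose image $Z=g(Y)$ is a nontrivial minimal subshift (minimal as a factor of $Y$, infinite because $g(p)$ is not shift-periodic), hence $Z$ has the marker property and is a factor of $X$. Theorem~\ref{thm: technical main theorem} applied to $X\to Z$ with the band $[a,c_1]$ embeds $X$ into $B_1(V[a,c_1])\times Z\subset B_1(V[a,c_1])\times B_1(V[c_2,b])$, and the averaging map $(\varphi_1,\varphi_2)\mapsto\frac12(\varphi_1+\varphi_2)$ into $B_1(V[a,b])$ is injective precisely because the two Fourier supports are disjoint. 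In other words, instead of trying to erase the auxiliary factor $Y$ by genericity, the paper \emph{encodes} a minimal factor into a spare frequency band of the signal itself. If you want to rescue your approach you would need a genuinely new perturbation argument handling pairs $x_1\neq x_2$ with $\Phi(x_1)\neq\Phi(x_2)$; as written, the proposal does not prove the corollary.
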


\begin{proof}
$(X,T)$ has a nontrivial minimal factor $(Y,S)$.
Take $a<c_1<c_2<b$ with $\mdim(X,T) < c_1-a$.
From Lemma \ref{lemma: existence of nontrivial map}
there exists an equivariant continuous map $g:Y\to B_1(V[c_2,b])$ such that $Z=g(Y)$ is a nontrivial minimal system 
with respect to the shift.
Note that $Z$ also becomes a factor of $X$ and that it has the marker property.
Applying Theorem \ref{thm: technical main theorem}
to the factor map $X\to Z$, we can find an embedding of $(X,T)$ into the system $B_1(V[a,c_1])\times Z$, 
which becomes a subsystem of $B_1(V[a,b])$ by the embedding 
\[ B_1(V[a,c_1])\times B_1(V[c_2,b]) \to B_1(V[a,b]), \quad 
   (\varphi_1, \varphi_2) \mapsto \frac{1}{2}(\varphi_1 + \varphi_2).\]
\end{proof}

\begin{remark}    \label{remark: why we use continuous signals}
Although the above proof of Corollary \ref{cor: embed extensions of minimal systems into continuous signals}
is very simple, it exhibits the reason why continuous signals are more flexible than discrete ones.
The main trick above is to \textit{take two disjoint bands $[a,c_1]$ and $[c_2,b]$}.
This is possible because $a$ and $b$ are continuous parameters.
But the Hilbert cubes $([0,1]^N)^{\mathbb{Z}}$ have only the discrete parameter $N$.   
So we cannot apply the same trick.
\end{remark}

By the same argument as in the proof of (Theorem \ref{thm: embed minimal systems into continuous signals} $\Longrightarrow$
Theorem \ref{thm: main theorem}), we can deduce the next corollary from 
Corollary \ref{cor: embed extensions of minimal systems into continuous signals}.

\begin{corollary}  \label{cor: embed extensions of nontrivial minimal system into Hilbert cubes}
Let $N$ be a natural number, and $(X,T)$ an extension of a nontrivial minimal system with $\mdim(X,T)<N/2$.
Then we can embed $(X,T)$ into the shift on $\left([0,1]^N\right)^{\mathbb{Z}}$.
\end{corollary}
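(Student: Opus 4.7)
The plan is to run the exact same reduction that was used to deduce Theorem \ref{thm: main theorem} from Theorem \ref{thm: embed minimal systems into continuous signals}, but with Corollary \ref{cor: embed extensions of minimal systems into continuous signals} playing the role of Theorem \ref{thm: embed minimal systems into continuous signals}. Since the hypothesis here is precisely that $(X,T)$ is an extension of a nontrivial minimal system (with $\mdim(X,T)<N/2$), Corollary \ref{cor: embed extensions of minimal systems into continuous signals} is available to us, and nothing else in the earlier argument used that $(X,T)$ was itself minimal.

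Concretely, first I would choose real numbers $a,b$ with
\[ 0 < a < b < \frac{N}{2}, \qquad \mdim(X,T) < b-a, \]
which is possible since $\mdim(X,T)<N/2$. Applying Corollary \ref{cor: embed extensions of minimal systems into continuous signals} to $(X,T)$ produces an embedding of $(X,T)$ into the shift on $B_1(V[a,b])$.

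Next I would compose with the ``real part'' map
\[ B_1(V[a,b]) \to B_1(V^{\mathbb{R}}[-b,b]), \qquad \varphi \mapsto \tfrac{1}{2}(\varphi+\overline{\varphi}). \]
This is manifestly continuous and shift-equivariant; moreover it is injective on $B_1(V[a,b])$, because any $\varphi\in V[a,b]$ has $\mathcal{F}(\varphi)$ supported in $[a,b]$ while $\mathcal{F}(\overline{\varphi})$ is supported in $[-b,-a]$, and these two sets are disjoint thanks to $a>0$. Hence $\varphi+\overline{\varphi}=0$ forces $\mathcal{F}(\varphi)=0$, i.e.\ $\varphi=0$. The resulting function is real-valued, band-limited in $[-b,b]$, and of sup-norm $\le 1$, so it really lies in $B_1(V^{\mathbb{R}}[-b,b])$. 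Finally, since $b<N/2$, Lemma \ref{lemma: sampling theorem for real signals} embeds $B_1(V^{\mathbb{R}}[-b,b])$ into the shift on $\left([-1,1]^N\right)^{\mathbb{Z}}\cong\left([0,1]^N\right)^{\mathbb{Z}}$ by sampling on $\tfrac{1}{N}\mathbb{Z}$.

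There is essentially no obstacle: the argument is a formal chain of embeddings, and the only point requiring a moment's thought is the injectivity of the ``real part'' map, which is handled by the disjoint-support observation above. Composing the three equivariant embeddings delivers the desired embedding of $(X,T)$ into the shift on $\left([0,1]^N\right)^{\mathbb{Z}}$.
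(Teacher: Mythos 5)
Your proposal is correct and follows exactly the paper's intended argument: the paper deduces this corollary from Corollary \ref{cor: embed extensions of minimal systems into continuous signals} ``by the same argument'' as the deduction of Theorem \ref{thm: main theorem} from Theorem \ref{thm: embed minimal systems into continuous signals}, which is precisely the chain you describe. Your extra remark justifying injectivity of the real-part map via the disjointness of the supports $[a,b]$ and $[-b,-a]$ is a correct filling-in of a detail the paper leaves implicit.
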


\subsection{Ideas of the proof}  \label{subsection: ideas of the proof}

Here we explain our approach to Theorem \ref{thm: technical main theorem}.
The proof is technically involved.
So we explain the proof of the \textit{toy-model} and discuss what are the main difficulties in the case of Theorem 
\ref{thm: technical main theorem}.

Suppose $(Y,S)$ is a \textit{zero dimensional} dynamical system with the marker property.
Here the zero dimensionality means that clopen sets (closed and open sets) form an open basis of the topology.
Let $\Phi:(X,T)\to (Y,S)$ be an extension.
We denote by $C_T\left(X, [0,1]^{\mathbb{Z}}\right)$ the space of equivariant continuous maps 
$f:X\to [0,1]^{\mathbb{Z}}$.
The next theorem is proved in \cite[Theorem 1.5]{Gutman--Tsukamoto}.

\begin{theorem} \label{thm: toy-model}
If $\mdim(X,T)<1/2$ then for a dense $G_\delta$ subset of $f\in C_T\left(X,[0,1]^\mathbb{Z}\right)$
the map 
\[  (f,\Phi): X\to [0,1]^\mathbb{Z}\times Y, \quad x\mapsto (f(x), \Phi(x)) \]
is an embedding.
\end{theorem}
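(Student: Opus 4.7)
The plan is the Baire-category strategy from the Lindenstrauss school. For each $\varepsilon>0$, let $E_\varepsilon$ denote the set of $f\in C_T(X,[0,1]^{\mathbb{Z}})$ for which $(f,\Phi)$ is an $\varepsilon$-embedding with respect to $d$. The condition that $(f,\Phi)$ is an embedding of dynamical systems coincides, by compactness of $X$, with injectivity of $(f,\Phi)$, and this in turn is $f\in\bigcap_{k\geq 1}E_{1/k}$. Each $E_\varepsilon$ is open in $C_T(X,[0,1]^{\mathbb{Z}})$ by compactness of $X$ and continuity of $(f,\Phi)$, and $C_T(X,[0,1]^{\mathbb{Z}})$ is complete in the uniform distance, so by Baire's theorem the theorem reduces to proving each $E_\varepsilon$ is dense.

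For density, given $\varepsilon>0$, $f_0\in C_T(X,[0,1]^{\mathbb{Z}})$, and $\delta>0$, I would produce $f\in E_\varepsilon$ within $\delta$ of $f_0$ as follows. Combining zero-dimensionality of $Y$ with the marker property via a topological Rokhlin-type refinement, pass to a clopen Kakutani--Rokhlin tower decomposition of $Y$ into finitely many columns $\{Y_{i,j}:0\leq j<\ell_i\}$ with $SY_{i,j}=Y_{i,j+1}$ and heights $\ell_i$ bounded and arbitrarily large. Simultaneously, since $\mdim(X,T)<1/2$, for each $\ell$ sufficiently large one has $\widim_{\varepsilon/3}(X,d_\ell)<\ell/2$ for the Bowen metric $d_\ell$; hence for every $\ell_i$ in the tower there exists an $(\varepsilon/3)$-embedding $\pi_i\colon X\to P_i$ with respect to $d_{\ell_i}$, into a finite simplicial complex $P_i$ of dimension $k_i<\ell_i/2$.

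For each height $\ell_i$, since $2k_i+1\leq\ell_i$, general position provides an injective continuous map $g_i\colon P_i\to[0,1]^{\ell_i}$; post-composing with a small dilation renders its image of $L^\infty$-diameter less than $\delta$. Pulling the tower back to $X$ via $\Phi$, define the perturbation $f$ blockwise: on each clopen column base $\Phi^{-1}(Y_{i,0})$ prescribe $(f(x)_n)_{0\leq n<\ell_i}=g_i(\pi_i(x))$, and propagate by equivariance $f(T^nx)_m=f(x)_{n+m}$; fix a final $f\in C_T(X,[0,1]^{\mathbb{Z}})$ by a convex combination with $f_0$ keeping the perturbation within $\delta$. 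To verify $f\in E_\varepsilon$, take $x,x'$ with $\Phi(x)=\Phi(x')$ and $f(x)=f(x')$: the two orbits visit identical column bases under $T$, and at each base-visit time $r$ the block identity $g_i(\pi_i(T^rx))=g_i(\pi_i(T^rx'))$ forces $\pi_i(T^rx)=\pi_i(T^rx')$ by injectivity of $g_i$, whence $d_{\ell_i}(T^rx,T^rx')<\varepsilon/3$. Since the columns tile $\mathbb{Z}$ along every orbit, the position $n=0$ lies in some such block, yielding $d(x,x')<\varepsilon$.

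The main obstacle is the clopen Kakutani--Rokhlin tower construction with bounded column heights---a topological Rokhlin lemma leveraging zero-dimensionality and the marker property---together with gluing the blockwise perturbations into a continuous equivariant $f$ close to $f_0$. The clopenness of the tower lifts is what removes any continuity defect at block boundaries. The tight factor $1/2$ enters exactly once, in the general-position inequality $2k_i+1\leq\ell_i$ ensuring that the generic map $g_i$ is an embedding, and it propagates verbatim from $\mdim(X,T)<1/2$ to the embedding conclusion.
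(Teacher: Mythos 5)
Your overall strategy is the paper's: reduce by Baire category to the density of the open sets of $\varepsilon$-embeddings, decompose each orbit of $Y$ into clopen-determined blocks of length at least $N$ (your Kakutani--Rokhlin tower is equivalent to the paper's device, which takes a single clopen marker set $U$ and the induced interval partition of $\mathbb{Z}$ along each orbit), and on each block exploit $\widim_{\varepsilon}(X,d_\ell)<\ell/2$ together with general position in $[0,1]^{\ell}$. The role of clopenness in making the blockwise definition continuous, and the verification at time $0$, are also as in the paper.

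There is, however, a genuine gap in how you produce the perturbation. You take an \emph{arbitrary} embedding $g_i\colon P_i\to[0,1]^{\ell_i}$, shrink its image to diameter $<\delta$, and then pass to a convex combination with $f_0$ to stay $\delta$-close to $f_0$. If $f=(1-\lambda)f_0+\lambda\tilde f$ with $\tilde f$ the blockwise map $g_i\circ\pi_i$, then the hypothesis $f(x)=f(x')$ only yields $(1-\lambda)\bigl(f_0(x)-f_0(x')\bigr)=\lambda\bigl(\tilde f(x')-\tilde f(x)\bigr)$; it does \emph{not} yield the block identity $g_i(\pi_i(T^rx))=g_i(\pi_i(T^rx'))$ on which your injectivity argument rests, since nothing forces $f_0(x)=f_0(x')$. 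The correct construction (Lemma \ref{lemma: approximation by linear map} and Corollary \ref{cor: embeddings are dense}) is to choose $g_i$ so that $g_i\circ\pi_i$ is itself uniformly $\delta$-close to the corresponding block of $f_0$: after refining $P_i$, approximate the block map $x\mapsto(f_0(x)_0,\dots,f_0(x)_{\ell_i-1})$ by a simplicial map $P_i\to\mathbb{R}^{\ell_i}$ (this is where one needs the uniform-continuity link $d(x,y)<\varepsilon\Rightarrow|f_0(x)_0-f_0(y)_0|<\delta$, which your write-up omits), and then use that embeddings are dense among simplicial maps $P_i\to\mathbb{R}^{\ell_i}$ when $\ell_i\geq 2\dim P_i+1$ to perturb it into an embedding without losing the $\delta$-closeness. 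With this replacement the final map is defined blockwise as $g_i\circ\pi_i$ itself, with no averaging against $f_0$, and the rest of your verification goes through.
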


We briefly explain the proof of this theorem, which is a prototype of the proof of Theorem \ref{thm: technical main theorem}.
Let $d$ be a distance on $X$.
Note the obvious equivalence:
\[  \text{embedding}  \Longleftrightarrow  \text{$\varepsilon$-embedding for all $\varepsilon>0$}. \]
Therefore it is enough to prove that the following set is dense and $G_\delta$:
\[ \bigcap_{n=1}^\infty \left\{f\in C_T\left(X,[0,1]^\mathbb{Z}\right) |\, 
   \text{$(f,\Phi)$ is a $(1/n)$-embedding w.r.t. $d$}\right\}. \]
This is obviously $G_\delta$.
So our main task is to prove the next proposition.

\begin{proposition}  \label{prop: toy-model}
For any $\delta>0$ and $f\in C\left(X,[0,1]^\mathbb{Z}\right)$ there exists 
$g\in C\left(X,[0,1]^\mathbb{Z}\right)$ satisfying the following.

\noindent 
(1) For all $x\in X$ and $t\in \mathbb{Z}$, $|f(x)(t)-g(x)(t)| < \delta$.

\noindent 
(2) $(g,\Phi):X\to [0,1]^\mathbb{Z}\times Y$ is a $\delta$-embedding with respect to $d$.
\end{proposition}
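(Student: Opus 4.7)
The plan is to construct $g$ equivariantly as a small perturbation $g(x)(t)=f(x)(t)+\tilde h(T^t x)$, where $\tilde h:X\to\mathbb{R}$ is concentrated on ``marker blocks'' of length $N$ and its $N$ free slots per block are designed so that $(g,\Phi)$ separates points on each $\Phi$-fiber up to $\delta$. The mean-dimension hypothesis $\mdim(X,T)<1/2$ enters through a metric Menger--N\"obeling theorem: for a chosen small $\varepsilon>0$ and large $N$ the bound $\widim_\varepsilon(X,d_N)<N/2$ makes the set of $\varepsilon$-embeddings $M\to\mathbb{R}^N$ dense in $C(M,\mathbb{R}^N)$ for any compact $M\subset X$. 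This is the source of the sharp constant $1/2$.

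First I would fix $\varepsilon\in(0,\delta)$, choose $N$ large with $\widim_\varepsilon(X,d_N)<N/2$, and apply the marker property to obtain an open $U\subset Y$ with $U\cap S^nU=\emptyset$ for $1\le n\le N$ and $Y=\bigcup_{n\in\mathbb{Z}}S^nU$. Using compactness and zero-dimensionality of $Y$, I would refine the open cover $\{S^nU\}$ to a finite clopen partition $\{V_\alpha\}$ of $Y$ with each $V_\alpha\subset S^{n_\alpha}U$ for a chosen offset $n_\alpha\in\mathbb{Z}$. Pulling this back through $\Phi$ produces a clopen partition of $X$; the marker set $M:=\Phi^{-1}(U)$ is then clopen, and the translates $M,TM,\dots,T^{N-1}M$ are pairwise disjoint, while the locally constant function $x\mapsto n_{\alpha(\Phi(x))}$ assigns to each $x\in X$ a well-defined ``offset to a marker'' with $T^{-n_\alpha}x\in M$.

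The key step is to construct $H:M\to(-\delta/2,\delta/2)^N$ so that $\Phi_f+H$, where $\Phi_f(x_0):=(f_0(T^k x_0))_{k=0}^{N-1}$ and $f_0:=f(\cdot)(0)$, is an $\varepsilon$-embedding of $M$ with respect to $d_N|_M$. Since $\widim_\varepsilon(M,d_N)\le\widim_\varepsilon(X,d_N)<N/2$, the metric Menger--N\"obeling theorem yields $\varepsilon$-embeddings into $\mathbb{R}^N$ arbitrarily close to $\Phi_f$ in $L^\infty$-norm, so such $H$ exists. Extend $H$ to $\tilde h:X\to\mathbb{R}$ by $\tilde h(T^kx_0):=H_k(x_0)$ on each block ($x_0\in M$, $0\le k<N$) and $\tilde h:=0$ elsewhere; the clopen block structure makes $\tilde h$ continuous. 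After a preliminary rescaling of $f$ (to keep the range of $g$ in $[0,1]$), set $g(x)(t):=f(x)(t)+\tilde h(T^tx)$. Then $\norm{g-f}_{L^\infty}<\delta$ gives (1); for (2), if $g(x)=g(x')$ and $\Phi(x)=\Phi(x')$, then $n_{\alpha(\Phi(x))}=n_{\alpha(\Phi(x'))}=:n$, and reading off the $N$ block-values of $g$ yields $(\Phi_f+H)(T^{-n}x)=(\Phi_f+H)(T^{-n}x')$, so $d_N(T^{-n}x,T^{-n}x')<\varepsilon$ and thus $d(x,x')<\varepsilon<\delta$.

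The main obstacle is the metric Menger--N\"obeling density step: proving that $\widim_\varepsilon(M,d_N)<N/2$ makes $\varepsilon$-embeddings (in the $d_N$ sense) dense in $C(M,\mathbb{R}^N)$. This is the rigorous form of the $2\dim+1$ general-position heuristic and is what forces the constant $1/2$; it is also the part that must be substantially generalized in Theorem \ref{thm: technical main theorem}, where $\mathbb{R}^N$ is replaced by the band-limited signal space $V[a,b]$ and classical general position is no longer directly available.
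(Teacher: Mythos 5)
Your construction perturbs $f$ only on a fixed-length block of length $N$ anchored at each visit to the marker set $M=\Phi^{-1}(U)$, setting $\tilde h=0$ elsewhere. This leaves a genuine gap: the translates $M, TM, \dots, T^{N-1}M$ are pairwise disjoint, but they do \emph{not} cover $X$. Consecutive visits to $M$ along an orbit are at least $N+1$ apart, and compactness only bounds the gap by some integer strictly larger than $N$ that is not under your control. When the offset $n=n_{\alpha(\Phi(x))}$ to the relevant marker exceeds $N-1$, the origin lies outside the block $\{-n,\dots,-n+N-1\}$, and your final inference $d_N(T^{-n}x,T^{-n}x')<\varepsilon\Rightarrow d(x,x')<\varepsilon$ fails: the quantity $d(x,x')$ is simply not among the $N$ coordinates that $d_N(T^{-n}x,T^{-n}x')$ controls.

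This is precisely the difficulty the paper's proof addresses. It constructs $\varepsilon$-embeddings $G_m:X\to[0,1]^m$ for \emph{every} $m\geq N$, available because $\widim_\varepsilon(X,d_m)<m/2$ for all $m\geq N$ (not just $m=N$), and attaches $G_{\#I(x,n)-1}$ to the full Voronoi interval $I(x,n)$ of variable length $\#I(x,n)>N$. These variable-length blocks cover $\mathbb{Z}$, so $0$ always lies inside the block used in the separation argument. Your mean-dimension-to-density step and block-reading are sound; the fix is to let the perturbation length vary with the gap between consecutive markers, a locally constant quantity since $M$ is clopen, as in the paper's proof.
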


\begin{proof}
Take $0<\varepsilon <\delta$ such that 
\begin{equation*} 
   d(x,y)< \varepsilon \Longrightarrow  |f(x)(0) - f(y)(0)| < \delta.
\end{equation*}
From $\mdim(X,T)<1/2$, we can find $N>0$ such that 
\[  \widim_\varepsilon (X,d_n) < \frac{n}{2} \quad (\forall n\geq N). \]
Then for every $n\geq N$ we can construct an $\varepsilon$-embedding with respect to $d_n$ 
\[ G_n :X\to [0,1]^n = [0,1]^{\{0,1,2,\dots,n-1\}}  \]
satisfying $|G_n(x)(t)- f(x)(t)|< \delta$ for all $x\in X$ and $t=0,1,2,\dots,n-1$.
For the reason why such $G_n$ exists, see Lemma \ref{lemma: approximation by linear map} and Corollary \ref{cor: embeddings are dense}.

From the assumption on $Y$, there exists a clopen set $U\subset Y$ satisfying 
\[  U\cap S^n U = \emptyset \quad (\forall 1\leq n\leq N), \quad 
    Y = \bigcup_{n\in \mathbb{Z}} S^n U. \]
Take $x\in X$.  Let $E(x)$ be the set of integers $n$ satisfying $\Phi(T^n x) = S^n \Phi(x) \in U$.
For each $n\in E(x)$ we define the interval $I(x,n)\subset \mathbb{Z}$ by 
\[ I(x,n) = \left\{k\in \mathbb{Z}|\, \forall m\in E(x): |k-n|\leq |k-m|\right\}. \]
This is a kind of ``Voronoi diagram construction''.
(Voronoi diagram was first used by \cite{Gutman 1} in the context of mean dimension.)
From the assumption on $U$, the interval $I(x,n)$ is always finite and $\#I(x,n)>N$. 
We denote by $\alpha_{x,n}$ and $\beta_{x,n}$ the left and right end-points of $I(x,n)$ respectively.

We define $g(x)\in [0,1]^\mathbb{Z}$ by 
\[  g(x)(t) = G_{\# I(x,n)-1}\left(T^{\alpha_{x,n}}x \right)(t-\alpha_{x,n}), \]
where $n$ is the integer in $E(x)$ satisfying $\alpha_{x,n}\leq t < \beta_{x,n}$.
Roughly speaking, we attached the ``perturbation map'' $G_{\#I(x,n)-1}$ to each interval $I(x,n)$.
It is direct to check that $g:X\to [0,1]^\mathbb{Z}$ is continuous and equivariant.
We have 
\[  |g(x)(t) - f(x)(t)| = 
    \left| G_{\# I(x,n)-1}\left(T^{\alpha_{x,n}}x \right)(t-\alpha_{x,n}) - f\left(T^{\alpha_{x,n}} x\right)(t-\alpha_{x,n})\right|
     < \delta.\]

We prove that $(g,\Phi):X\to [0,1]^\mathbb{Z}\times Y$ is a $\delta$-embedding with respect to $d$.
Suppose $(g(x),\Phi(x)) = (g(x'),\Phi(x'))$ for some $x,x'\in X$.
The equation $\Phi(x)=\Phi(x')$ means that $E(x)=E(x')$ and $I(x,n)=I(x',n)$ for all $n\in E(x)$.
Take $n\in E(x)$ with $\alpha_{x,n} \leq 0 <  \beta_{x,n}$.
From $g(x)=g(x')$ 
\[ G_{\#I(x,n)-1}\left(T^{\alpha_{x,n}}x \right)  = G_{\#I(x,n)-1}\left(T^{\alpha_{x,n}}x' \right).  \]
Since the map $G_{\#I(x,n)-1}$ is an $\varepsilon$-embedding with respect to $d_{\#I(x,n)-1}$, we get 
\[  d(x,x') \leq d_{\#I(x,n)-1}\left(T^{\alpha_{x,n}} x, T^{\alpha_{x,n}} x' \right) < \varepsilon < \delta.\]
Here we have used $\alpha_{x,n}\leq 0 < \beta_{x,n}$ in the first inequality.
\end{proof}

The above proof is simple. But if we try a similar approach to Theorem \ref{thm: technical main theorem}, then 
we encounter the following four difficulties.

\begin{itemize}
   \item \textbf{Difficulty 1}. Theorem \ref{thm: toy-model}
    deals with discrete signals. But Theorem \ref{thm: technical main theorem}
   deals with continuous ones.
   So we need to convert the above procedure to the continuous setting.
   This is a rather straightforward issue.
   The main ingredient is a certain interpolation function prepared in Section \ref{section: interpolation}.

   \item \textbf{Difficulty 2}.  A crucial fact in the above proof of Theorem \ref{thm: toy-model} is that the set $U$ is clopen, 
   which implies that the map $g$ continuously depends on $x\in X$.   We cannot hope this in Theorem \ref{thm: technical main theorem}. 
   We overcome this difficulty by \textit{going one dimension higher}.
   We consider a certain Voronoi diagram in the \textit{plane} $\mathbb{R}^2$ and construct a tiling of the \textit{line} 
   \begin{equation}  \label{eq: construction of I(x,n), idea of the proof}
       \mathbb{R} = \bigcup_{n\in \mathbb{Z}} I(x,n) 
   \end{equation}    
   from the Voronoi diagram.
   This is a tricky idea first introduced by Lindenstrauss and the authors \cite{Gutman--Lindenstrauss--Tsukamoto}.
   We explain it in Section \ref{section: marker property and Voronoi tiling}.

   \item \textbf{Difficulty 3}.  All the intervals $I(x,n)$ are sufficiently long ($\#I(x,n) >N$) in the proof of Theorem \ref{thm: toy-model}.
   But some intervals $I(x,n)$ in (\ref{eq: construction of I(x,n), idea of the proof}) 
   may be short.
   We cannot construct a good perturbation over such short intervals.
   This is the most crucial difficulty.
   The key observation is that most part of the line is cover by sufficiently long intervals in
   (\ref{eq: construction of I(x,n), idea of the proof}).
   Then \textit{we take tax from these long intervals and use it for helping short intervals}.
   We explain this heuristic idea more precisely in Section \ref{section: marker property and Voronoi tiling}.

   \item \textbf{Difficulty 4}.  In the proof of Theorem \ref{thm: toy-model}, $I(x,n)$ are subsets of $\mathbb{Z}$.
   In particular they are locally constant with respect to $x\in X$.
   So it was easy to attach the perturbation maps $G_{\#I(x,n)-1}$ to $I(x,n)$.
   But the intervals $I(x,n)$ in (\ref{eq: construction of I(x,n), idea of the proof}) may continuously vary.
   Then it is more difficult to attach appropriate perturbation maps to $I(x,n)$ even if they are sufficiently long.
   We have to construct \textit{adjustable perturbation maps which can fit intervals of various length}. 
   This is a quite technical issue.  The construction is given in Section \ref{subsection: successive perturbations}, 
   which is based on Section \ref{section: linear maps from simplicial complex}.
\end{itemize}

After resolving all these difficulties, we finish the proof of Theorem \ref{thm: technical main theorem}
in Section \ref{subsection: construction of the map g}.

\section{Embeddings of simplicial complexes}   \label{section: linear maps from simplicial complex}

Here we prepare the method of constructing good maps from simplicial complex.
Every simplicial complex is assumed to be finite, i.e., it has only finitely many simplices.
The main result of this section is Lemma \ref{lemma: parametric genericity of embedding}.

In this section we use some standard ideas of real algebraic geometry.
A reference is Bochnak--Coste--Roy \cite{Bochnak--Coste--Roy}.
We will repeatedly use the Tarski--Seidenberg principle \cite[Proposition 2.2.7]{Bochnak--Coste--Roy}:
\textit{The image of a semi-algebraic set under a semi-algebraic map is also semi-algebraic}. 
The dimension of semi-algebraic sets is the algebraic dimension \cite[Definition 2.8.1]{Bochnak--Coste--Roy},
which does not behave pathologically.
(Indeed algebraic dimension coincides with topological dimension \cite[Theorem 2.3.6, Corollary 2.8.9]{Bochnak--Coste--Roy}.
But we don't need this fact.)

Let $P$ be a simplicial complex, and $V$ a real vector space.
A map $f:P \to V$ is said to be \textbf{simplicial} 
if for every simplex $\Delta\subset P$ it has the form 
\[  f\left(\sum_{k=0}^n \lambda_k v_k \right) = \sum_{k=0}^n \lambda_k f(v_k),  \quad
    \left(\lambda_k\geq 0, \> \sum_{k=0}^n\lambda_k =1\right),   \]
on $\Delta$, where $v_0, \dots, v_n$ are the vertices of $\Delta$.
The next lemma establishes the technique to approximate arbitrary continuous maps by simplicial ones.

\begin{lemma} \label{lemma: approximation by linear map}
Suppose $V$ is endowed with a norm $\norm{\cdot}$.
Let $(X,d)$ be a compact metric space with a continuous map $f:X\to V$.
Let $\varepsilon$ and $\delta$ be positive numbers satisfying 
\[ d(x,y)<\varepsilon \Longrightarrow \norm{f(x)-f(y)} < \delta.\]
Let $P$ be a simplicial complex, and $\pi:X\to P$ an $\varepsilon$-embedding with respect to $d$.
Then, after replacing $P$ by a sufficiently finer subdivision, there exists a 
simplicial map $g: P \to V$ satisfying 
\[ \norm{f(x)-g(\pi(x))} < \delta, \quad \forall x\in X.\]
\end{lemma}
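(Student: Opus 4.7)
The plan is to reduce the problem to choosing $g$ on the vertices of a sufficiently fine subdivision of $P$ and extending simplicially, exploiting the $\varepsilon$-embedding hypothesis on $\pi$ to transport the uniform continuity of $f$ through $\pi$. The first step is a compactness lemma: there exists $\eta>0$ such that every subset $A\subset P$ of diameter at most $2\eta$ has preimage $\pi^{-1}(A)$ of $d$-diameter strictly less than $\varepsilon$. If this failed, one would produce sequences $x_n,y_n\in X$ with $d(x_n,y_n)\geq \varepsilon$ while $\pi(x_n),\pi(y_n)$ converge to a common point $p\in P$; extracting convergent subsequences by compactness of $X$ yields $x,y\in X$ with $\pi(x)=\pi(y)=p$ and $d(x,y)\geq \varepsilon$, contradicting the $\varepsilon$-embedding property of $\pi$.

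Next, subdivide $P$ so that every simplex has diameter less than $\eta$; then the closed star $\overline{\mathrm{St}}(v)$ of each vertex $v$ (the union of all closed simplices containing $v$) has diameter less than $2\eta$. For every vertex $v$ with $\overline{\mathrm{St}}(v)\cap \pi(X)\neq\emptyset$, choose some $x_v\in X$ with $\pi(x_v)\in \overline{\mathrm{St}}(v)$ and set $g(v):=f(x_v)$; for the remaining vertices set $g(v):=0$. Extend $g$ simplicially over $P$; this is automatically well-defined on common faces since the extension is determined by vertex values. To verify the bound, fix $x\in X$, let $\Delta=[v_0,\dots,v_n]$ be a closed simplex containing $\pi(x)$, and write $\pi(x)=\sum_i \lambda_i v_i$ with $\lambda_i\geq 0$ and $\sum_i \lambda_i=1$. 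Each $\overline{\mathrm{St}}(v_i)$ contains the point $\pi(x)\in\pi(X)$, so every $x_{v_i}$ is defined, and both $\pi(x)$ and $\pi(x_{v_i})$ lie in the set $\overline{\mathrm{St}}(v_i)$ of diameter less than $2\eta$. By the first step, $d(x,x_{v_i})<\varepsilon$, hence $\norm{f(x)-f(x_{v_i})}<\delta$, and convexity gives
$$
\norm{f(x)-g(\pi(x))} = \norm{\sum_i \lambda_i\bigl(f(x)-f(x_{v_i})\bigr)} \leq \sum_i \lambda_i\norm{f(x)-f(x_{v_i})} < \delta,
$$
where the final strict inequality holds because a convex combination of numbers each strictly less than $\delta$, with weights summing to $1$, is again strictly less than $\delta$.

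The main obstacle is conceptual rather than computational and lies in the first step: one must upgrade the pointwise hypothesis that $\pi$ is an $\varepsilon$-embedding to the uniform statement that sufficiently small subsets of $P$ have $\pi$-preimages of diameter less than $\varepsilon$. Once that uniform $\eta$ is in hand, the choice of subdivision, the closed-star pull-back construction of vertex values, and the convex combination estimate are all routine, and the strict inequality $<\delta$ is preserved automatically.
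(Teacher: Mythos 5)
Your proposal is correct and follows essentially the same route as the paper: subdivide $P$ finely enough that the star of each vertex has $\pi$-preimage of diameter less than $\varepsilon$, assign to each vertex the $f$-value of a point pulled back from its star, extend simplicially, and conclude by convexity. The only differences are cosmetic — you use closed stars where the paper uses open stars, and you spell out the compactness argument that the paper leaves implicit in the phrase ``we can subdivide $P$ sufficiently finer.''
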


\begin{proof}
This is proved in \cite[Lemma 2.1]{Gutman--Lindenstrauss--Tsukamoto}.
But we reproduce it here for the completeness.

For a vertex $v$ of $P$ we denote by $O(v)$ the open star around it (the union of the relative interiors of simplices containing $v$).
We can subdivide $P$ sufficiently finer so that 
$\diam \,\pi^{-1}(O(v)) < \varepsilon$ for all vertices $v$ of $P$.
Take any vertex $v\in P$.
If $\pi^{-1}(O(v))\neq \emptyset$ then we choose a point $x_v\in \pi^{-1}(O(v))$ and set $g(v) = f(x_v)$.
If $\pi^{-1}(O(v))= \emptyset$, then we choose $g(v)\in V$ arbitrarily. 
We define $g:P\to V$ by extending it linearly on every simplex.

Take $x\in X$.
Let $v_0, \dots, v_n$ be the vertices of $P$ satisfying $\pi(x)\in O(v_k)$.
We have $d(x,x_{v_k})<\varepsilon$ and hence
$\norm{f(x)-f(x_{v_k})} < \delta$. 
The point $g(\pi(x))$ is a convex combination of $f(x_{v_0}),\dots,f(x_{v_n})$.
Thus $\norm{f(x)-g(\pi(x))} < \delta$.
\end{proof}

Let $D$ be a natural number, and $P$ a simplicial complex of dimension $n$.
We denote by $V(P)$ the set of vertices of $P$. 
We naturally consider $P\subset \mathbb{R}^{V(P)}$.
The space of simplicial maps from $P$ to $\mathbb{R}^D$ is identified with 
the space $\mathrm{Hom}(\mathbb{R}^{V(P)}, \mathbb{R}^D)$ of linear maps from the vector space 
$\mathbb{R}^{V(P)}$ to $\mathbb{R}^D$.
This is endowed with the structure of a real algebraic manifold.
Its topology is the standard Euclidean topology
(not the Zariski topology).

\begin{lemma}   \label{lemma: space of non-injective linear maps}
Let $X$ be the space of simplicial maps $f:P\to \mathbb{R}^D$ which are not embeddings.
This is a semi-algebraic set in $\mathrm{Hom}(\mathbb{R}^{V(P)}, \mathbb{R}^D)$, and its codimension is 
greater than or equal to $D-2n$.
\end{lemma}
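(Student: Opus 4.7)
The plan is to decompose the non-embedding set $X$ as a finite union of pieces indexed by ordered pairs of closed simplices $(\Delta_1,\Delta_2)$ of $P$, and to bound the codimension of each piece by a standard incidence/fiber-dimension argument in real algebraic geometry.

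First I would note that a simplicial map $f:P\to\mathbb{R}^D$ fails to be an embedding precisely when there exist closed simplices $\Delta_1,\Delta_2\subset P$ and points $x\in\Delta_1$, $y\in\Delta_2$ with $x\neq y$ (as points of $P$) and $f(x)=f(y)$. Setting
\[ X(\Delta_1,\Delta_2)=\left\{f\in\mathrm{Hom}(\mathbb{R}^{V(P)},\mathbb{R}^D)\,:\,\exists\, x\in\Delta_1,\ y\in\Delta_2,\ x\neq y,\ f(x)=f(y)\right\}, \]
one has $X=\bigcup_{\Delta_1,\Delta_2} X(\Delta_1,\Delta_2)$, a finite union.

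Next, for fixed $\Delta_1,\Delta_2$ of dimensions $p$ and $q$, I would introduce the incidence variety
\[ Z=\left\{(f,x,y)\in\mathrm{Hom}(\mathbb{R}^{V(P)},\mathbb{R}^D)\times\Delta_1\times\Delta_2\,:\,x\neq y,\ f(x)=f(y)\right\}, \]
which is semi-algebraic since $\Delta_1,\Delta_2$ are polytopes and $f(x)-f(y)$ is polynomial in $(f,x,y)$. The heart of the argument is to show that for each fixed $(x,y)$ with $x\neq y$ the slice $\{f:f(x)=f(y)\}$ has codimension exactly $D$ in $\mathrm{Hom}(\mathbb{R}^{V(P)},\mathbb{R}^D)$. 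Writing $f(x)-f(y)=\sum_{v\in V(P)}c_v f(v)$ in barycentric coordinates, the coefficient vector $(c_v)\in\mathbb{R}^{V(P)}$ is nonzero: if it vanished, the barycentric representations of $x$ and $y$ as points of $P\hookrightarrow\mathbb{R}^{V(P)}$ would coincide, forcing $x=y$. Hence $f\mapsto\sum_v c_v f(v)$ is a surjection onto $\mathbb{R}^D$ and its kernel has codimension $D$.

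Fibering $Z$ over $\Delta_1\times\Delta_2$ minus its diagonal then gives $\dim Z\leq p+q+\dim\mathrm{Hom}(\mathbb{R}^{V(P)},\mathbb{R}^D)-D$. By the Tarski--Seidenberg principle, $X(\Delta_1,\Delta_2)$ is the semi-algebraic image of $Z$ under the projection to $\mathrm{Hom}(\mathbb{R}^{V(P)},\mathbb{R}^D)$, hence semi-algebraic of codimension at least $D-p-q\geq D-2n$, and the finite union $X$ inherits the same lower bound on codimension. The main subtlety, and the only step requiring care, is the codimension-$D$ claim when $\Delta_1$ and $\Delta_2$ share a common face; but this reduces cleanly to the injectivity of the barycentric-coordinate embedding of $P$ into $\mathbb{R}^{V(P)}$, which is built into the definition of the geometric realization.
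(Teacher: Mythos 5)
Your proof is correct, and its skeleton (incidence variety, fiber-dimension count, Tarski--Seidenberg) matches the paper's; but the way you organize the dimension count is genuinely different. The paper stratifies $X$ by vertex subsets $A\subset V(P)$ with $\#A\leq 2n+2$, introducing the sets $X_A$ of maps for which $\{f(v)\}_{v\in A}$ are affinely dependent, and bounds $\mathrm{codim}\,X_A\geq D-\#A+2$ via a separate sublemma on non-injective linear maps (itself proved by an incidence variety over projective space). You instead stratify by ordered pairs of closed simplices $(\Delta_1,\Delta_2)$ and fiber the incidence variety $Z$ directly over $\Delta_1\times\Delta_2$ minus the diagonal, computing each fiber to be an affine subspace of codimension exactly $D$. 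The two computations rest on the identical key observation: the coefficient vector $c=\lambda-\mu$ of barycentric coordinates is nonzero when $x\neq y$, so $f(x)=f(y)$ is either an affine-dependence condition on at most $2n+2$ points (paper) or a codimension-$D$ linear condition on $f$ for each fixed $(x,y)$ (you). Your route is somewhat more economical, since it dispenses with the auxiliary sublemma and yields the slightly finer statement $\mathrm{codim}\,X(\Delta_1,\Delta_2)\geq D-\dim\Delta_1-\dim\Delta_2$; the paper's route has the mild advantage that the sets $X_A$ are closed under taking all pairs of simplices spanning the same vertex set, and that the sublemma is reused nowhere else but is self-contained. One small point to make explicit if you write this up: the inequality $\dim Z\leq\dim(\Delta_1\times\Delta_2)+(\dim\mathrm{Hom}(\mathbb{R}^{V(P)},\mathbb{R}^D)-D)$ requires the semi-algebraic fiber-dimension theorem (e.g.\ \cite[Theorem 2.8.5]{Bochnak--Coste--Roy}), which is exactly the fact the paper invokes implicitly in its Sublemma \ref{sublemma: codimension of non-injective linear maps}.
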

\begin{proof}
Consider 
\[ \{(x,y, f)\in P\times P \times  \mathrm{Hom}(\mathbb{R}^{V(P)}, \mathbb{R}^D)|\, x\neq y, f(x)=f(y)\}.\]
This is a semi-algebraic set in $\mathbb{R}^{V(P)}\times \mathbb{R}^{V(P)}\times  \mathrm{Hom}(\mathbb{R}^{V(P)}, \mathbb{R}^D)$.
Its projection to the factor  $\mathrm{Hom}(\mathbb{R}^{V(P)}, \mathbb{R}^D)$ is equal to $X$.
Thus $X$ is semi-algebraic by the Tarski--Seidenberg principle.
Let $A\subset V(P)$. 
We define $X_A$ as the space of simplicial maps 
$f:P\to \mathbb{R}^D$ such that $f(v)$ $(v\in A)$ are affinely dependent.
Its codimension is greater than or equal to $D-\#A+2$ by Sublemma \ref{sublemma: codimension of non-injective linear maps} below.
The space $X$ is contained in 
\[ \bigcup_{A\subset V(P), \#A \leq 2n+2} X_A.\]
Hence its codimension is greater than or equal to $D-2n$.

\begin{sublemma} \label{sublemma: codimension of non-injective linear maps}
Let $N$ be a natural number.
We consider the space $Y$ of non-injective linear maps $F:\mathbb{R}^N\to \mathbb{R}^D$.
Then its codimension in $\mathrm{Hom}(\mathbb{R}^N, \mathbb{R}^D)$ is greater than or equal to $D-N+1$.
\end{sublemma}
\begin{proof}
Set 
\[ Z = \{(F,x)\in \mathrm{Hom}(\mathbb{R}^N ,\mathbb{R}^D)\times \mathbb{P}^{N-1}(\mathbb{R})|\, Fx = 0\}.\]
Here $\mathbb{P}^{N-1}(\mathbb{R})$ is the $N-1$ dimensional projective space.
Let $\pi_1$ and $\pi_2$ be the projections from $Z$ to $ \mathrm{Hom}(\mathbb{R}^N ,\mathbb{R}^D)$ and $\mathbb{P}^{N-1}(\mathbb{R})$
respectively. For each $x\in \mathbb{P}^{N-1}(\mathbb{R})$ the space $\pi_2^{-1}(x)$ has dimension $D(N-1)$.
Thus $\dim Z \leq N-1 + D(N-1) = (D+1)(N-1)$.
Then the codimension of $Y=\pi_1(Z)$ is greater than or equal to $ND-(D+1)(N-1) = D-N+1$.
\end{proof}
\end{proof}

\begin{corollary} \label{cor: embeddings are dense}
If $D\geq 2n+1$ then embeddings $f:P\to  \mathbb{R}^D$ are dense in $\mathrm{Hom}(\mathbb{R}^{V(P)}, \mathbb{R}^D)$.
\end{corollary}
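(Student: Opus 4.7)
My plan is to deduce the corollary immediately from the codimension bound established in Lemma \ref{lemma: space of non-injective linear maps}. First I would observe that under the hypothesis $D \geq 2n+1$, the codimension $D - 2n$ of the set $X$ of simplicial maps $P \to \mathbb{R}^D$ that fail to be embeddings is at least $1$. Hence $X$ is a proper semi-algebraic subset of the finite-dimensional real vector space $\mathrm{Hom}(\mathbb{R}^{V(P)}, \mathbb{R}^D)$ (finite-dimensional because the simplicial complex $P$ has only finitely many vertices, as standing in force in this section).

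The remaining step is the standard real-algebraic fact that a semi-algebraic subset of strictly smaller algebraic dimension than its ambient smooth real algebraic manifold has empty Euclidean interior, and therefore dense complement. I would justify this by invoking a semi-algebraic cell decomposition of $X$ into finitely many smooth strata, each of dimension at most $\dim X$, which is strictly less than $\dim \mathrm{Hom}(\mathbb{R}^{V(P)}, \mathbb{R}^D)$; such a finite union cannot contain a Euclidean open ball. Taking complements shows that embeddings form a dense subset.

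There is essentially no obstacle here: the entire substantive content of the corollary is packaged into the codimension estimate of Lemma \ref{lemma: space of non-injective linear maps}. The only mildly non-trivial point is the passage from ``positive codimension in the algebraic sense'' to ``dense complement in the Euclidean topology,'' and this is standard material that can be cited from Bochnak--Coste--Roy \cite{Bochnak--Coste--Roy} (e.g.\ via the equivalence between algebraic and topological dimension for semi-algebraic sets, which is already mentioned at the beginning of the section). Thus the whole proof should occupy only a couple of lines.
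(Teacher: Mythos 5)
Your argument is correct and is exactly the intended one: the paper states this corollary without proof precisely because it follows immediately from Lemma \ref{lemma: space of non-injective linear maps} in the way you describe. The only content to supply is the standard fact that a semi-algebraic set of positive codimension is nowhere dense (e.g.\ since dimension does not increase under closure, \cite[Proposition~2.8.2]{Bochnak--Coste--Roy}), and you handle that correctly.
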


\begin{lemma} \label{lemma: parametric genericity of embedding}
Suppose $D\geq 2n+2$, and let $g:P\to \mathbb{R}^D$ be a simplicial map.
Then for an open dense subset of simplicial maps $f\in \mathrm{Hom}(\mathbb{R}^{V(P)}, \mathbb{R}^D)$,
the maps
\[ (1-t)f+tg: P \to \mathbb{R}^D, \quad x\mapsto (1-t)f(x)+tg(x) \]
become embeddings for all $0\leq t<1$.
\end{lemma}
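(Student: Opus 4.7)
The plan is to combine Lemma~\ref{lemma: space of non-injective linear maps}, applied slice-by-slice in the parameter $t$, with basic semi-algebraic dimension theory.

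Let $X \subset \mathrm{Hom}(\mathbb{R}^{V(P)}, \mathbb{R}^D)$ be the semi-algebraic set of simplicial maps $P \to \mathbb{R}^D$ that are \emph{not} embeddings. By Lemma~\ref{lemma: space of non-injective linear maps} and the hypothesis $D \geq 2n+2$, we have $\mathrm{codim}\, X \geq D - 2n \geq 2$. Introduce the parametric bad set
\[
\tilde{B} \;=\; \bigl\{\,(f, t) \in \mathrm{Hom}(\mathbb{R}^{V(P)}, \mathbb{R}^D) \times [0, 1) \;:\; (1-t)f + tg \in X\,\bigr\}
\]
and its projection $B = \pi_1(\tilde{B}) \subset \mathrm{Hom}(\mathbb{R}^{V(P)}, \mathbb{R}^D)$; both are semi-algebraic by Tarski--Seidenberg. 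For each fixed $t \in [0, 1)$, the map $f \mapsto (1-t)f + tg$ is an invertible affine automorphism of $\mathrm{Hom}(\mathbb{R}^{V(P)}, \mathbb{R}^D)$, so the $t$-slice of $\tilde{B}$ has the same dimension as $X$, namely at most $\dim \mathrm{Hom} - 2$. The standard bound for semi-algebraic sets fibered over a one-dimensional base then gives $\dim \tilde{B} \leq \dim \mathrm{Hom} - 1$, and hence $\dim B \leq \dim \mathrm{Hom} - 1$.

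The payoff is to pass to closures: for any semi-algebraic set $S$ one has $\dim \overline{S} = \dim S$, so $\overline{B}$ has dimension at most $\dim \mathrm{Hom} - 1$ and, in particular, empty interior. The open set $\mathrm{Hom}(\mathbb{R}^{V(P)}, \mathbb{R}^D) \setminus \overline{B}$ is therefore dense, and for any $f$ in it one has $(1-t)f + tg \notin X$ for every $t \in [0, 1)$; equivalently, $(1-t)f + tg$ is an embedding for all $0 \leq t < 1$, as required.

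The main obstacle I expect is openness, not density. The naive attempt---projecting the \emph{closed} semi-algebraic set $\{(f, t) \in \mathrm{Hom} \times [0, 1] : (1-t)f + tg \in X\}$ to $\mathrm{Hom}$ and exploiting compactness of $[0, 1]$---fails whenever $g \in X$, because then the entire fiber at $t = 1$ is all of $\mathrm{Hom}$ and the projection covers everything. Restricting to $t \in [0, 1)$ destroys closedness in the fiber direction, so $B$ itself need not be closed. The semi-algebraic identity $\dim \overline{B} = \dim B$ is precisely the tool that sidesteps this difficulty and simultaneously delivers openness of the good set.
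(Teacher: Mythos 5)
Your proof is correct and follows essentially the same route as the paper: both arguments sweep the codimension~$\geq 2$ bad set $X$ through a one-parameter family (you fiber over $t\in[0,1)$, the paper takes the cone $\bigcup_{a\geq 0}aZ$ with $a=t/(1-t)$), invoke Tarski--Seidenberg plus the dimension bound for semi-algebraic images/fibered sets to get codimension~$\geq 1$, and then use that semi-algebraic closure preserves dimension to obtain an open dense complement. The only differences are cosmetic reparametrizations of the same dimension count.
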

\begin{proof}
Let $Z$ be the space of simplicial maps $f:P\to \mathbb{R}^D$ such that $f+g:P\to \mathbb{R}^D$ is not an embedding.
By Lemma \ref{lemma: space of non-injective linear maps}, 
this is semi-algebraic and its codimenion in $\mathrm{Hom}(\mathbb{R}^{V(P)}, \mathbb{R}^D)$ is greater than or equal to $D-2n\geq 2$.
Consider 
\[ \bigcup_{a\geq 0} aZ = \bigcup_{a\geq 0} \{af|\, f\in Z\} \subset \mathrm{Hom}(\mathbb{R}^{V(P)}, \mathbb{R}^D). \]
This is the image of the semi-algebraic map 
\[  [0,\infty) \times Z \to \mathrm{Hom}(\mathbb{R}^{V(P)},\mathbb{R}^D), \quad (a,f)\mapsto af. \]
So it is semi-algebraic by the Tarski--Seidenberg principle. 
Its codimension 
is greater than or equal to $D-2n-1\geq 1$; see \cite[Theorem 2.8.8]{Bochnak--Coste--Roy}.
Here we used real algebraic geometry essentially. 
We cannot hope a reasonable behavior of the topological dimension of $\bigcup_{a\geq 0} aZ$ if $Z$ is a fractal.

Then the codimension of the union
\begin{equation}  \label{eq: non parametric embedding set}
     \{f:P\to \mathbb{R}^D|\, \text{simplicial non-embedding}\}\cup  \bigcup_{a\geq 0} aZ.
\end{equation}
is also greater than or equal to 1.
In particular this is nowhere dense because the dimension of semi-algebraic sets does not increase under the operation of closure
\cite[Proposition 2.8.2]{Bochnak--Coste--Roy}. 
Any simplicial map $f:P\to \mathbb{R}^D$ in the complement of (\ref{eq: non parametric embedding set}) satisfies the required property.
\end{proof}

\section{Interpolation}  \label{section: interpolation}

In this section we prepare the technique of \textit{interpolations}.
This is used for converting discrete signals into continuous ones.
Every idea here is due to Beurling \cite[pp. 351-365]{Beurling}.
We follow his argument.
The construction in this section is somewhat ad hoc, and a more sophisticated approach is possible.
But we prefer the ad hoc approach because it is more elementary.

Let $l$ and $\rho$ be positive numbers with $l\rho \in \mathbb{Z}$.

\begin{notation}
For two quantities $x$ and $y$ we write 
\[ x\lesssim y \]
if there exists a positive constant $C$ depending only on $l$ and $\rho$ satisfying 
\[ x\leq C y.\]
\end{notation}

Let $\Lambda\subset \mathbb{R}$ be a \textbf{multiset}.
`'Multi'' means that some points may have multiplicity.
For integers $n$ we set $\Lambda_n = \Lambda\cap [nl, (n+1)l)$.
The notation $\Lambda_n$ is used only in this section.

\begin{condition} \label{condition: interpolation}
(1) $\inf_{0\neq \lambda\in \Lambda} |\lambda| \geq 1/\rho$.

\noindent 
(2) For all integers $n$, we have $\# \Lambda_n \leq l\rho$. Here $\#(\cdot)$ is the counting with multiplicity.
For example, $\#\{1,1,1,2,3\} = 5$.

\noindent 
(3) For all nonzero integers $n$, we have $\#\Lambda_n = l\rho$.
\end{condition}

\begin{lemma}  \label{lemma: interpolation}
Suppose $\Lambda$ satisfies Condition \ref{condition: interpolation} (1), (2), (3).
Then 
\[  f(z) = \lim_{A\to \infty} \prod_{\lambda\in \Lambda, 0<|\lambda|<A} \left(1-\frac{z}{\lambda}\right) \]
defines a holomorphic function in $\mathbb{C}$ satisfying $f(0)=1$ and $f(\lambda) = 0$ for all nonzero $\lambda\in \Lambda$.
Moreover for all $z\in \mathbb{C}$
\begin{equation}  \label{eq: growth of interpolation}
   |f(z)| \lesssim (1+|z|)^{5l\rho} e^{\pi\rho |y|}, \quad (\text{$y$ is the imaginary part of $z$}) .
\end{equation}
The above product takes the multiplicity into account.
For example if a nonzero $\lambda$ appears twice in $\Lambda$ then the factor $(1-z/\lambda)$ appears twice in the product.
\end{lemma}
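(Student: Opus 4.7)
\emph{Convergence.} The proof splits into convergence of the infinite product to an entire function $f$, and the growth estimate. Both rely essentially on Condition~\ref{condition: interpolation}. For convergence, expand $\log(1-z/\lambda)=-z/\lambda+O((z/\lambda)^{2})$. The tails of order $k\geq 2$ are absolutely summable by (1) and (2). The linear piece requires the symmetric pairing supplied by (3): for each $m\neq 0$, bijectively match the $l\rho$ points of $\Lambda_{m}$ with those of $\Lambda_{-m}$. Any such pair $(\lambda,\mu)$ satisfies $\lambda+\mu\in[0,2l)$ and, for $|m|\geq 2$, $|\lambda\mu|\geq m(m-1)l^{2}\gtrsim m^{2}$, whence $|1/\lambda+1/\mu|\lesssim 1/m^{2}$. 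Summing the $l\rho$ pairs in $\Lambda_{m}\cup\Lambda_{-m}$ and then over $m$ yields $\sum O(1/m^{2})<\infty$; the partial-sum fluctuation as $A$ moves between consecutive multiples of $l$ is controlled by (2). Hence $\lim_{A\to\infty}\sum_{0<|\lambda|<A}1/\lambda$ exists, and the product converges uniformly on compacta to an entire $f$ with $f(0)=1$ and zero set $\Lambda\setminus\{0\}$ (with multiplicity).

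\emph{Growth.} The target $(1+|z|)^{5l\rho}e^{\pi\rho|y|}$ combines polynomial growth on the real axis with exponential type $\pi\rho$; by a Phragm\'{e}n--Lindel\"{o}f-type argument it is enough to establish each separately. For the polynomial part, split $\log|f(z)|=\sum_{\lambda}\log|1-z/\lambda|$ into a near zone $\{\lambda:|\lambda-\mathrm{Re}(z)|\leq 2l\}$ and a far zone. The near zone meets at most five intervals $[ml,(m+1)l)$, hence contains at most $5l\rho$ elements by (2); each $\lambda$ there contributes $|1-z/\lambda|\leq\rho(2l+|y|)\lesssim 1+|z|$ by (1), giving a near-zone factor of $(1+|z|)^{5l\rho}$. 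For the exponential type, compare with the reference $\cos(\pi\rho z)=\prod_{\widetilde{\lambda}\in\widetilde{\Lambda}}(1-z/\widetilde{\lambda})$ where $\widetilde{\Lambda}=(1/\rho)(\mathbb{Z}+1/2)$. By $l\rho\in\mathbb{Z}$, $|\widetilde{\Lambda}_{m}|=l\rho=|\Lambda_{m}|$ for every $m\neq 0$, enabling an order-preserving bijection $\phi_{m}:\Lambda_{m}\to\widetilde{\Lambda}_{m}$ with $|\lambda-\phi_{m}(\lambda)|\leq l$. Grouping the ratios $(1-z/\lambda)/(1-z/\phi_{m}(\lambda))$ into symmetric $\{m,-m\}$ pairs and Taylor-expanding the paired logarithm, cancellation analogous to the convergence step bounds the far-zone contribution by an absolute constant times $|\cos(\pi\rho z)|\lesssim e^{\pi\rho|y|}$.

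\emph{Main obstacle.} The delicate point is the far-zone comparison. Because $\sum 1/\lambda$ converges only conditionally, the paired ratios must be grouped carefully so that the linear-in-$z$ terms from each $\{m,-m\}$ block combine with the corresponding reference terms to give $O(|z|^{2}/m^{4})$ rather than $O(|z|/m^{2})$; otherwise the far-zone product would appear to grow linearly in $|z|$, destroying the exponential-type bound. Orchestrating this pairing together with the boundary adjustment at $m=0$ (where $|\Lambda_{0}|$ may be strictly less than $|\widetilde{\Lambda}_{0}|=l\rho$, with the mismatch absorbed into the polynomial factor) is the technical heart of the argument, and essentially follows Beurling~\cite{Beurling}.
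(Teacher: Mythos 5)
Your convergence argument is correct and is essentially the paper's: higher-order terms of $\log(1-z/\lambda)$ are absolutely summable by Conditions (1)--(2), and the linear terms converge conditionally via the $\Lambda_m$/$\Lambda_{-m}$ pairing from Condition (3) (the estimate $|1/\lambda+1/\mu|\lesssim 1/m^2$ matches the paper's inequality (5.4)). The growth estimate, however, contains a genuine gap. You claim that after pairing the blocks $\{m,-m\}$ the far-zone comparison with $\cos(\pi\rho z)$ produces errors of size $O(|z|^2/m^4)$ and hence that the far-zone product is bounded by an absolute constant times $e^{\pi\rho|y|}$. Neither half of this survives scrutiny. First, $\sum_{m}|z|^2/m^4=O(|z|^2)$, so exponentiating gives $e^{O(|z|^2)}$, which is not of exponential type; the claimed per-block bound, even if achieved, does not yield a constant. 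Second, the asserted conclusion is simply false: on the real axis the far-zone product is genuinely polynomially large in $|x|$. This is visible in the paper's own computation, where removing the near factors $n=k,k+1$ from the reference product $\prod_n|1-x/(nl)|^{l\rho}$ forces the compensating factor $(k(k+1))^{l\rho}\approx(1+x)^{2l\rho}$ --- that growth lives entirely in the far zone, and no pairing of $\{m,-m\}$ blocks can remove it, since it is already present for the perfectly symmetric reference set. So your decomposition attributes all polynomial growth to the $5l\rho$ near factors and all of $e^{\pi\rho|y|}$ to the far zone, whereas in truth the far zone carries both an exponential and a polynomial contribution; as written, your argument would "prove" a bound that the model case $\Lambda=l\rho\cdot$(endpoints $nl$ with multiplicity) already violates.

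A secondary issue: the reduction "by a Phragm\'en--Lindel\"of-type argument it is enough to establish each separately" requires an a priori bound showing $f$ has at most exponential growth in the whole plane (Phragm\'en--Lindel\"of in a quadrant needs a global order hypothesis, not just boundary control on the two axes). The paper proves this as a separate step before invoking Phragm\'en--Lindel\"of; your sketch omits it, and since your far-zone comparison is the step that would have supplied it, the omission is not harmless. For comparison, the paper's route is different from yours: it bounds $|f|$ on the real axis by direct comparison with $\left|\sin(\pi x/l)/(\pi x/l)\right|^{l\rho}$ (reference zeros at the interval endpoints $nl$, with the three intervals nearest $0$ and $x$ treated exceptionally, which is exactly where the polynomial factor $(1+|x|)^{5l\rho}$ is collected), bounds $|f(iy)|$ via the counting function $n(r)\leq C+2\rho r$ and integration by parts, separately verifies exponential type, and only then applies Phragm\'en--Lindel\"of. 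If you want to keep your $\cos(\pi\rho z)$ comparison, you must track the polynomial loss in the far zone explicitly rather than claiming it is $O(1)$.
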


\begin{proof}
We should keep in mind the following simple fact, which is a toy-model of the statement.
The function 
\[ \frac{\sin z}{z} = \lim_{A\to \infty} \prod_{0<|n|<A} \left(1-\frac{z}{n\pi}\right) \]
is holomorphic, and its growth is $O(e^{|y|})$.

We use the notation $\sum^*$ and $\prod^*$ for indicating sum and product over $\lambda\neq 0$.
For example we write 
\[  f(z) = \lim_{A\to \infty} \prod_{\lambda\in \Lambda, |\lambda|<A}^* \left(1-\frac{z}{\lambda}\right).\]
First we need to show the convergence of $f(z)$.
A slightly delicate point is that this is a \textit{conditional} convergence, i.e. the sum $\sum_{\lambda\in \Lambda}^* 1/|\lambda|$
diverges.

For $|z/\lambda|< 1$
\[ \log\left(1-\frac{z}{\lambda}\right) = -\frac{z}{\lambda} - \frac{z^2}{2\lambda^2} - \frac{z^3}{3\lambda^3} - \dots,\]
From Condition \ref{condition: interpolation} (2), we can easily prove
that for any $B>0$ 
\begin{equation}  \label{eq: estimate of higher terms}
     \sum_{\lambda \in \Lambda, |\lambda|>B} \frac{1}{|\lambda|^k} \lesssim \frac{1}{B^{k-1}}  \quad (k\geq 2). 
\end{equation}    
From Condition \ref{condition: interpolation} (3), for any $n\geq 2$ 
\begin{equation} \label{eq: cancellation of leading term}
    \left| \sum_{\lambda\in \Lambda_n} \frac{1}{\lambda} + \sum_{\lambda\in \Lambda_{-n}} \frac{1}{\lambda}\right| 
    \lesssim \frac{1}{n^2}.
\end{equation}
This implies the convergence of 
\[ \lim_{A\to \infty} \sum_{\lambda\in \Lambda, |\lambda|<A}^* \frac{1}{\lambda}.\]
Therefore $f(z)$ becomes a holomorphic function satisfying $f(0) =1$ and $f(\lambda) =0$ for all nonzero $\lambda\in \Lambda$.

Next we estimate the growth of $f$ on the real line. 
Suppose $x>0$ and let $k$ be the integer with $kl\leq x< (k+1)l$.
We assume $k>0$. The case $k=0$ is easier and can be discussed in a similar way.
\begin{itemize}
  \item For $\lambda\in \Lambda_n$ with $n\leq -2$ or $n\geq k+1$, we have $|1-x/\lambda|\leq 1-x/(n+1)l$. So 
\[  \prod_{\lambda\in \Lambda_n} \left|1-\frac{x}{\lambda}\right| \leq  \left|1-\frac{x}{(n+1)l}\right|^{l\rho}. \]
  \item For $\lambda\in \Lambda_n$ with $1\leq n < k$, we have $|1-x/\lambda| \leq  x/(nl) -1$. So 
\[  \prod_{\lambda\in \Lambda_n} \left|1-\frac{x}{\lambda}\right| \leq \left|1-\frac{x}{nl}\right|^{l\rho}.\]
\end{itemize}
The factors for $n=-1,0,k$ should be treated exceptionally.
The modulus $|f(x)|$ is bounded by 
\begin{equation*}
   \begin{split}
   & \prod^*_{\lambda\in \Lambda_{-1}\cup\Lambda_0\cup\Lambda_k} \left|1-\frac{x}{\lambda}\right|   \cdot
    \lim_{A\to \infty} \prod_{|n|<A, n\neq 0, k, (k+1)} \left|1-\frac{x}{nl}\right|^{l\rho} \\
   & =    \prod^*_{\lambda\in \Lambda_{-1}\cup\Lambda_0\cup\Lambda_k} \left|1-\frac{x}{\lambda}\right|  \cdot
         \left|\frac{\sin \frac{\pi x}{l}}{ \frac{\pi x}{l} \left(1-\frac{x}{kl}\right)\left(1-\frac{x}{(k+1)l}\right)}\right|^{l\rho}    .
   \end{split}
\end{equation*}   
The first factor is easy to estimate:
\[ \prod_{\lambda\in \Lambda_{-1}\cup\Lambda_0\cup\Lambda_{k}} \left|1-\frac{x}{\lambda}\right| \lesssim 
   (1+x)^{3l\rho}.\]
Set $t=x/l$. 
\[ \frac{\sin \frac{\pi x}{l}}{ \frac{\pi x}{l} \left(1-\frac{x}{kl}\right)\left(1-\frac{x}{(k+1)l}\right)}  = 
   \frac{k(k+1) \sin\pi t}{\pi t(k-t)(k+1-t)}.\]
From the mean value theorem,
\[ \left|\frac{\sin \pi t}{t}\right| \leq \pi, \quad 
    \left|\frac{\sin\pi t}{k-t}\right| \leq \pi, \quad 
    \left|\frac{\sin\pi t}{k+1-t}\right| \leq \pi.\]
Hence 
\[ \left| \frac{k(k+1) \sin\pi t}{\pi t(k-t)(k+1-t)}\right|  \lesssim k(k+1) \lesssim (1+x)^2.\]
Therefore 
\[ |f(x)| \lesssim (1+x)^{5l\rho}.\]
The case $x<0$ is the same and we get 
\[ |f(x)| \lesssim (1+|x|)^{5l\rho}.\]

Next we estimate $|f(y\sqrt{-1})|$. Suppose $y>0$.
For $r>0$ we set $n(r) = \#(\Lambda\cap (-r,r))$.
This is bounded by 
\[ n(r) \leq C + 2\rho r \]
where $C$ is a positive constant depending only on $l$ and $\rho$.
We have $|f(y\sqrt{-1})|^2 = \prod^*_{\lambda\in \Lambda} (1+y^2/\lambda^2)$.
Hence 
\[ 2 \log |f(y\sqrt{-1})|  = \sum^*_{\lambda\in \Lambda}\log\left(1+\frac{y^2}{\lambda^2}\right) 
   = \int_0^\infty  \log\left(1+\frac{y^2}{r^2}\right) d n(r) .\]
Using the integration by parts, this is equal to 
\[   2y^2 \int_0^\infty \frac{n(r)}{r(r^2+y^2)} dr .\]
From $n(r) \leq C + 2\rho r$, 
\[ \log |f(y\sqrt{-1})|  \leq C y^2 \int_0^\infty \frac{dr}{r(r^2+y^2)}  + 2\rho y^2 \int_0^\infty \frac{dr}{r^2+y^2}
     = C\int_0^\infty \frac{dr}{r(r^2+1)}  +  \pi \rho y. \]
Thus 
\[ |f(y\sqrt{-1})| \lesssim e^{\pi \rho y}.\]
The case $y<0$ is the same. So we get 
\[  |f(y\sqrt{-1})| \lesssim e^{\pi\rho |y|}.  \]

Finally we show that $|f(z)|$ grows at most exponentially.
Let $z=x+y\sqrt{-1}$. We consider the case $x, y>0$. Other cases are the same.
Let $k$ be the integer with $kl\leq x<(k+1)l$.
Set 
\[ \Lambda' = \Lambda \setminus (\Lambda_{k-1}\cup \Lambda_k\cup \Lambda_{k+1}).\]
We estimate 
\[  \prod^*_{\lambda\in \Lambda_{k-1}\cup \Lambda_k\cup \Lambda_{k+1}} \left|1-\frac{z}{\lambda}\right|
     \lesssim (1+|z|)^{3l\rho}.\]
\begin{equation*}
   \begin{split}
    \lim_{A\to \infty} \prod^*_{\lambda\in \Lambda', |\lambda|<A} \left|1-\frac{z}{\lambda}\right|^2 
   = \lim_{A\to \infty} \prod^*_{\lambda\in \Lambda', |\lambda|<A} \left\{\left(1-\frac{x}{\lambda}\right)^2 + \frac{y^2}{\lambda^2}\right\}  \\
   = \left\{\lim_{A\to \infty} \prod^*_{\lambda\in \Lambda', |\lambda|<A} \left(1-\frac{x}{\lambda}\right)^2\right\} \cdot 
      \prod^*_{\lambda\in \Lambda'} \left\{ 1+ \frac{y^2}{(\lambda-x)^2}\right\} .
    \end{split}
\end{equation*}      
As in the proof of $|f(x)|\lesssim (1+|x|)^{5l\rho}$ we estimate 
\[  \lim_{A\to \infty} \prod^*_{\lambda\in \Lambda', |\lambda|<A} \left(1-\frac{x}{\lambda}\right)^2  \lesssim (1+x)^{12l\rho}.\]
As in $|f(y\sqrt{-1})| \lesssim e^{\pi\rho |y|}$ 
\[  \prod^*_{\lambda\in \Lambda'} \left\{ 1+ \frac{y^2}{(\lambda-x)^2}\right\} \lesssim e^{2\pi\rho |y|}.\]
Therefore we conclude that $|f(z)|$ grows at most exponentially.

We have proved that $f(z)$ is of exponential type with $|f(x)|\lesssim (1+|x|)^{5l\rho}$ and 
$|f(y\sqrt{-1})| \lesssim e^{\pi\rho |y|}$.
Then we can prove (\ref{eq: growth of interpolation}) by the Phragm\'{e}n--Lindel\"{o}f principle.
For example, in the first quadrant ($x,y\geq 0$), we apply the Phragm\'{e}n--Lindel\"{o}f principle to the function 
\[  (1+z)^{-5l\rho} e^{\pi\rho\sqrt{-1} z} f(z) \]
and conclude (\ref{eq: growth of interpolation}).
\end{proof}

\begin{lemma}  \label{lemma: continuity of interpolation}
For any positive numbers $r$ and $\varepsilon$ there exists $B_1= B_1(r,\varepsilon, l,\rho)>0$ satisfying the following.
Suppose $\Lambda\subset \mathbb{R}$ satisfies Conditions \ref{condition: interpolation} (1), (2), (3). Then
\[  \left|1-\lim_{A\to \infty} \prod_{\lambda\in \Lambda, B_1<|\lambda|<A} \left(1-\frac{z}{\lambda}\right)\right| < \varepsilon \quad 
    (|z|\leq r).\]
\end{lemma}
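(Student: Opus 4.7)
The plan is to show that the logarithm of the truncated product is uniformly small on $|z|\leq r$ once $B_1$ is large enough, then exponentiate. Write $P_{B_1,A}(z):=\prod_{\lambda\in\Lambda,\,B_1<|\lambda|<A}(1-z/\lambda)$. First choose $B_1\geq 2r$, so that $|z/\lambda|\leq 1/2$ whenever $|z|\leq r$ and $|\lambda|>B_1$; then the Taylor series $\log(1-z/\lambda)=-z/\lambda-\sum_{k\geq 2}z^k/(k\lambda^k)$ converges absolutely term-by-term, and I split
\[ \log P_{B_1,A}(z) \;=\; -z\!\!\sum_{B_1<|\lambda|<A}\!\frac{1}{\lambda}\;-\;\sum_{B_1<|\lambda|<A}\sum_{k\geq 2}\frac{z^k}{k\lambda^k}. \]

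The higher-order sum on the right is absolutely convergent and, by the estimate (5.2) already derived in the proof of Lemma \ref{lemma: interpolation}, is dominated by
\[ \sum_{k\geq 2}\frac{r^k}{k}\!\sum_{|\lambda|>B_1}\!\frac{1}{|\lambda|^k} \;\lesssim\; \sum_{k\geq 2}\frac{r^k}{kB_1^{k-1}} \;\lesssim\; \frac{r^2}{B_1}, \]
uniformly in $A$ and in $z$ with $|z|\leq r$.

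For the first-order sum I invoke the cancellation (5.3) between $\Lambda_n$ and $\Lambda_{-n}$. Taking $B_1=(M+1)l$ for a large integer $M$, the bins $\Lambda_n$ with $n\geq M+2$ or $n\leq -M-2$ lie wholly inside $\{|\lambda|>B_1\}$, the bins with $-M-1\leq n\leq M$ lie wholly outside the range of summation, and only the single bin $\Lambda_{M+1}$ is split by the boundary. Pairing $\Lambda_n$ with $\Lambda_{-n}$ for $n\geq M+2$ and invoking (5.3) yields
\[ \Bigl|\!\sum_{n\geq M+2}\!\Bigl(\sum_{\lambda\in\Lambda_n}\tfrac{1}{\lambda}+\sum_{\lambda\in\Lambda_{-n}}\tfrac{1}{\lambda}\Bigr)\Bigr|\;\lesssim\;\sum_{n\geq M+2}\frac{1}{n^2}\;\lesssim\;\frac{1}{M}\;\lesssim\;\frac{l}{B_1}, \]
which simultaneously shows the limit $A\to\infty$ exists and bounds it. The unpaired contribution from $\Lambda_{M+1}\cap\{|\lambda|>B_1\}$ is trivially bounded by $l\rho/B_1$.

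Putting everything together, $|\log P_{B_1,A}(z)|\lesssim(r+r^2)/B_1$ uniformly in $A$ and in $|z|\leq r$, so letting $A\to\infty$ and then choosing $B_1=B_1(r,\varepsilon,l,\rho)$ large enough that the bound drops below $\log(1+\varepsilon)$ yields the stated inequality. I expect the main subtlety to be the conditional convergence of $\sum 1/\lambda$, which is handled exactly as in Lemma \ref{lemma: interpolation} via the pairing; the treatment of the single boundary bin $\Lambda_{M+1}$ is just minor bookkeeping, and no substantive new difficulty should arise.
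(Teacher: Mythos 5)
Your proposal is correct and follows essentially the same route as the paper: take logarithms on $|z|\leq r$ with $B_1\geq 2r$, bound the $k\geq 2$ terms by the tail estimate (\ref{eq: estimate of higher terms}), and control the conditionally convergent first-order sum via the pairing cancellation (\ref{eq: cancellation of leading term}), yielding $|\log P_{B_1,A}(z)|\lesssim (|z|+|z|^2)/B_1$. The paper states the two resulting bounds directly without spelling out the boundary-bin bookkeeping, which you handle correctly; no substantive difference.
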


\begin{proof}
For $|z/\lambda|<1$ 
\[ \log\left(1-\frac{z}{\lambda}\right) = -\frac{z}{\lambda} - \frac{z^2}{2\lambda^2}-\frac{z^3}{3\lambda^3} - \dots.\]
From (\ref{eq: estimate of higher terms}) and (\ref{eq: cancellation of leading term}), for $B_1>0$
\[\lim_{A\to \infty} \left|\sum_{\lambda\in \Lambda, B_1< |\lambda| < A} \frac{1}{\lambda} \right| \lesssim  \frac{1}{B_1}, \quad 
    \sum_{\lambda\in \Lambda, |\lambda|>B_1} \frac{1}{|\lambda|^k} \lesssim \frac{1}{B_1^{k-1}} \quad
     (k\geq 2).\]
Thus for sufficiently large $B_1$
\[ \left| \lim_{A\to \infty} \sum_{\lambda\in \Lambda, B_1<|\lambda|<A} \log\left(1-\frac{z}{\lambda}\right) \right| \lesssim 
   \frac{|z|  + |z|^2 }{B_1}.\]
\end{proof}

We need to relax the conditions on $\Lambda$.
Let $\Lambda\subset \mathbb{R}$ be a multiset satisfying 
Conditions \ref{condition: interpolation} (1) and (2) but not necessarily (3).
For each nonzero integer $n$ we add $nl$ to $\Lambda$ with multiplicity $(l\rho-\#\Lambda_n)$.
We denote by $\Lambda^+$ the resulting multiset and call it the \textbf{saturation} of $\Lambda$.
This satisfies $\Lambda\subset \Lambda^+$ and all the three conditions of Condition \ref{condition: interpolation}.
(The construction of $\Lambda^+$ is the most ad hoc part of the argument.)

Let $\tau$ be a positive number.
Let $\psi(\xi)$ be a nonnegative smooth function in $\mathbb{R}$ satisfying 
\[  \supp(\psi)\subset \left[-\frac{\tau}{2}, \frac{\tau}{2}\right], \quad \int_{-\infty}^\infty \psi(\xi)d\xi = 1.\]
Then the inverse Fourier transform $\overline{\mathcal{F}}(\psi)$ is a rapidly decreasing function satisfying 
\[ \overline{\mathcal{F}}(\psi)(0) = 1,
  \quad \left|\overline{\mathcal{F}}(\psi)(x+y\sqrt{-1})\right| \leq   e^{\pi \tau |y|}.\]
We define a function $\varphi_\Lambda$ by 
\begin{equation}   \label{eq: rapidly decreasing interpolation}
   \varphi_\Lambda(x) = \overline{\mathcal{F}}(\psi)(x)
   \left\{\lim_{A\to\infty}  \prod_{\lambda\in \Lambda^+, 0<|\lambda|<A}\left(1-\frac{x}{\lambda}\right)\right\}.
\end{equation}
From Lemmas \ref{lemma: Paley--Wiener} and \ref{lemma: interpolation}
\begin{itemize}
   \item $\varphi_\Lambda$ belongs to the Banach space $V[-(\rho+\tau)/2, (\rho+\tau)/2]$.
   \item $\varphi_\Lambda(0)=1$ and $\varphi_\Lambda(\lambda)=0$ for all nonzero $\lambda\in \Lambda$.
   \item $\varphi_\Lambda$ is a rapidly decreasing. In particular there exists $K>0$ depending only on $l,\rho, \tau$ such that 
   \begin{equation}  \label{eq: damping of rapidly decreasing interpolation}
       |\varphi_\Lambda(x)| \leq \frac{K}{1+|x|^2} .
   \end{equation}   
\end{itemize}
Note that $\varphi_\Lambda$ depends on $l$, $\rho$ and $\tau$ although they are not explicitly written in the notation.
In the proof of Theorem \ref{thm: technical main theorem} the numbers $l$, $\rho$ and $\tau$ are fixed in the beginning of the argument.
So this does not cause a confusion.

\begin{lemma} \label{lemma: continuity of rapidly decreasing interpolation}
For any positive numbers $r$ and $\varepsilon$ there exists $B_2 = B_2(r,\varepsilon, l,\rho,\tau)>0$ satisfying the following.
Suppose $\Lambda, \Lambda'\subset \mathbb{R}$ satisfy Conditions \ref{condition: interpolation} (1) and (2).
If $\Lambda\cap [-B_2,B_2] = \Lambda'\cap [-B_2,B_2]$ then 
\[ |\varphi_\Lambda(x)-\varphi_{\Lambda'}(x)| < \varepsilon    \quad (|x|\leq r).  \]
\end{lemma}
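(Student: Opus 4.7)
The plan is to factor the interpolating product into a small-scale part supported on $|\lambda|\leq B_1$ and a large-scale tail, arranging matters so that the small-scale parts of $\Lambda^+$ and $(\Lambda')^+$ agree exactly while the tails are each close to $1$ by Lemma \ref{lemma: continuity of interpolation}.

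First I would fix $B_1>0$ (to be chosen in a moment) and set $B_2 = B_1 + l$. The point of this choice is that if $\Lambda\cap[-B_2,B_2]=\Lambda'\cap[-B_2,B_2]$, then for every nonzero integer $n$ with $|nl|\leq B_1$ the entire interval $[nl,(n+1)l)$ lies in $[-B_2,B_2]$, so $\#\Lambda_n=\#\Lambda'_n$ and hence the added multiplicities in the saturations also agree there. Therefore $\Lambda^+\cap[-B_1,B_1]=(\Lambda')^+\cap[-B_1,B_1]$ as multisets. Writing
\[
P_\Lambda(x) = \prod_{\lambda\in\Lambda^+,\, 0<|\lambda|\leq B_1}\!\!\left(1-\tfrac{x}{\lambda}\right),\qquad Q_\Lambda(x) = \lim_{A\to\infty}\prod_{\lambda\in\Lambda^+,\, B_1<|\lambda|<A}\left(1-\tfrac{x}{\lambda}\right),
\]
we get $P_\Lambda=P_{\Lambda'}$, so $\varphi_\Lambda(x)-\varphi_{\Lambda'}(x)=\overline{\mathcal{F}}(\psi)(x)\,P_\Lambda(x)\,\bigl(Q_\Lambda(x)-Q_{\Lambda'}(x)\bigr)$.

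The key step, which I expect to be the main obstacle, is obtaining a $B_1$-free bound on $|P_\Lambda(x)|$ for $|x|\leq r$. The naive bound by a product of $O(B_1)$ factors degrades exponentially as $\varepsilon$ shrinks. The trick is to use Lemma \ref{lemma: interpolation}: the full product $f_\Lambda=P_\Lambda\cdot Q_\Lambda$ satisfies $|f_\Lambda(x)|\leq C(1+|x|)^{5l\rho}$ with a constant depending only on $l,\rho$. Thus as soon as $|Q_\Lambda(x)|\geq 1/2$ we get the uniform bound $|P_\Lambda(x)|\leq 2C(1+r)^{5l\rho}=:M_r$ on $[-r,r]$, with $M_r$ depending only on $r,l,\rho$.

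The proof is then assembled by making the dependencies consistent: set $\varepsilon' = \varepsilon/(4 M_r \|\overline{\mathcal F}(\psi)\|_{L^\infty})$, shrinking if necessary so that $\varepsilon'<1/2$, and apply Lemma \ref{lemma: continuity of interpolation} (with $l,\rho,\tau$ as given) to obtain $B_1=B_1(r,\varepsilon',l,\rho)$ so that $|Q_\Lambda(x)-1|<\varepsilon'$ and $|Q_{\Lambda'}(x)-1|<\varepsilon'$ on $|x|\leq r$, for every multiset satisfying Conditions \ref{condition: interpolation}(1)--(3); this applies to both $\Lambda^+$ and $(\Lambda')^+$. With $B_2=B_1+l$ as above, we then have $|Q_\Lambda(x)|\geq 1/2$, hence $|P_\Lambda(x)|\leq M_r$, and $|Q_\Lambda(x)-Q_{\Lambda'}(x)|<2\varepsilon'$. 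Combining,
\[
|\varphi_\Lambda(x)-\varphi_{\Lambda'}(x)| \leq \|\overline{\mathcal F}(\psi)\|_{L^\infty}\cdot M_r\cdot 2\varepsilon' < \varepsilon,
\]
for all $|x|\leq r$, which completes the argument.
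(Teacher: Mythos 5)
Your proof is correct and takes essentially the same route as the paper's: both observe that the saturations agree on a slightly shrunk interval, reduce the comparison to the tail products via Lemma \ref{lemma: continuity of interpolation}, and control the common (head) factor by the global bound from Lemma \ref{lemma: interpolation}. The paper phrases the last two steps as the ratio $\varphi_{\Lambda'}/\varphi_\Lambda$ being close to $1$ combined with $|\varphi_\Lambda|\leq K$, which is the same estimate as your bound $|P_\Lambda|\leq |f_\Lambda|/|Q_\Lambda|$.
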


\begin{proof}
If $\Lambda\cap [-B_2,B_2] = \Lambda'\cap [-B_2,B_2]$ then 
the saturations satisfy $\Lambda^+\cap [-B_2+l, B_2-l]= (\Lambda')^+\cap [-B_2+l, B_2-l]$.
Thus for $B_2>l$
\[ \frac{\varphi_{\Lambda'}(x)}{\varphi_\Lambda(x)} = \lim_{A\to \infty} 
   \frac{\prod_{\lambda\in (\Lambda')^+, B_2-l < |\lambda|<A} \left(1-\frac{x}{\lambda}\right)}
   {\prod_{\lambda\in \Lambda^+, B_2-l<|\lambda|<A} \left(1-\frac{x}{\lambda}\right)}.\]
From Lemma \ref{lemma: continuity of interpolation}, for sufficiently large $B_2$
\[ \left|1-\frac{\varphi_{\Lambda'}(x)}{\varphi_{\Lambda}(x)} \right| < \frac{\varepsilon}{K}    \quad (|x|\leq r).  \]
Then by (\ref{eq: damping of rapidly decreasing interpolation}) 
\[  |\varphi_\Lambda(x)-\varphi_{\Lambda'}(x)| = |\varphi_\Lambda(x)|  \left|1-\frac{\varphi_{\Lambda'}(x)}{\varphi_{\Lambda}(x)} \right| 
   < \varepsilon  \quad (|x|\leq r).\]
\end{proof}

\section{Voronoi diagram and weight functions}  \label{section: marker property and Voronoi tiling}

Here we introduce a tiling of $\mathbb{R}$.
This will be the basis of our
perturbation procedure. 
The key ingredient of the construction is \textit{dynamical Voronoi diagram}.
This is first introduced by Lindenstrauss and the authors \cite{Gutman--Lindenstrauss--Tsukamoto}.
We will consider a Voronoi diagram in the plane and cut it by the real line.
This gives a tiling of $\mathbb{R}$, which has several nice properties revealed in this section.
We would like to remark that our use of Voronoi diagram is conceptually influenced by the works of 
Lightwood \cite{Lightwood 1, Lightwood 2}, which 
study the $\mathbb{Z}^2$-version of the Krieger embedding theorem in symbolic dynamics.

Throughout this section, $(Y,S)$ is a dynamical system with the marker property.
Let $C, L_0, L_1$ be positive numbers.
We fix a natural number $L$ satisfying 
\begin{equation} \label{eq: choice of L}
  L > 4L_1+1 + 4CL_0(4L_0+3).
\end{equation}
The marker property condition is used in the next lemma.
\begin{lemma} \label{lemma: marker property}
There exist an integer $M >L$ and a continuous function $h:Y\to [0,1]$ satisfying 
the following two conditions.

\noindent 
(1) $\supp (h) \cap S^n(\supp (h)) = \emptyset$ for all $1\leq n\leq L$.

\noindent 
(2) $Y=\bigcup_{n=0}^{M-1} S^n(h^{-1}(1))$.
\end{lemma}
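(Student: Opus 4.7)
The plan is to deduce the lemma almost directly from the marker property, with a small amount of compactness and normality work to translate an open marker set into the support of a continuous function.

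First, I would apply the marker property of $(Y,S)$ with parameter $N=L$ to obtain an open set $U\subset Y$ satisfying $U\cap S^n U=\emptyset$ for $1\leq n\leq L$ and $Y=\bigcup_{n\in\mathbb{Z}}S^n U$. Since $Y$ is a compact metric (hence normal) space, for each $y\in U$ I can pick an open neighborhood $W_y$ of $y$ with $\overline{W_y}\subset U$. Each point of $Y$ belongs to some $S^n U$, so the collection $\{S^n W_y : n\in\mathbb{Z},\, y\in U\}$ is an open cover of $Y$; by compactness, finitely many suffice, say
\[
Y=\bigcup_{i=1}^{k}S^{n_i}W_{y_i}.
\]
Setting $F=\bigcup_{i=1}^{k}\overline{W_{y_i}}$ gives a closed set with $F\subset U$ (as a finite union of closed subsets of $U$) and $Y=\bigcup_{i=1}^{k}S^{n_i}F$. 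In particular $F\cap S^n F\subset U\cap S^n U=\emptyset$ for $1\leq n\leq L$.

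Next I would convert $F$ into the level set $h^{-1}(1)$ of a continuous $[0,1]$-valued function with support inside $U$. By normality, choose an open $V$ with $F\subset V\subset\overline{V}\subset U$, and apply Urysohn's lemma to obtain a continuous $h\colon Y\to[0,1]$ that equals $1$ on $F$ and $0$ on $Y\setminus V$. Then $\supp(h)\subset\overline{V}\subset U$, so
\[
\supp(h)\cap S^n\supp(h)\subset U\cap S^n U=\emptyset\quad(1\leq n\leq L),
\]
which is condition~(1). To arrange condition~(2), note that $h^{-1}(1)\supset F$, hence $Y=\bigcup_{i=1}^{k}S^{n_i}h^{-1}(1)$. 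Replacing $h$ by $h\circ S^{m}$ for $m=\min_i n_i$ translates the indices $n_i$ so that they all lie in $\{0,1,\dots,M'-1\}$ for $M'=\max_i n_i-\min_i n_i+1$; the translated $h$ still satisfies~(1) because $\supp(h\circ S^m)=S^{-m}\supp(h)$ and the disjointness condition is $S$-invariant. Finally, set $M=\max(M',L+1)$; enlarging $M$ only enlarges the union on the right of~(2), so the equality is preserved and we also obtain $M>L$.

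I do not expect a real obstacle here: the lemma is essentially a packaging of the marker property. The one point that requires a moment of care is the passage from the open disjointness $U\cap S^n U=\emptyset$ to disjointness of the \emph{closed} support of $h$, which is why I first shrink strictly inside $U$ (via $\overline{W_y}\subset U$ and $\overline{V}\subset U$) before invoking Urysohn. Note also that the specific choice of $L$ made in~\eqref{eq: choice of L} plays no role in this proof; it only becomes relevant in later sections that use $h$ and $M$.
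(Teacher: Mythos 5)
Your proof is correct and follows essentially the same route as the paper: invoke the marker property to get the open set $U$, extract by compactness a compact set $F\subset U$ whose finitely many translates cover $Y$, and then produce $h$ via Urysohn with $h\equiv 1$ on $F$ and $\supp(h)\subset U$ (the paper simply asserts the existence of such a compact $K$ and $M$, which is exactly what your covering argument supplies). The only blemish is the sign in the final translation step (to shift the indices $n_i$ into $\{0,\dots,M'-1\}$ you want to conjugate by $S^{-\min_i n_i}$ rather than $S^{\min_i n_i}$), which is immaterial to the argument.
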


\begin{proof}
By the definition of the marker property, there exists an open set $U\subset Y$ satisfying 
\[  U \cap S^n U = \emptyset \quad (1\leq n\leq L), \quad Y= \bigcup_{n\in \mathbb{Z}} S^n U.\]
We can find $M>L$ and a compact set $K\subset U$ satisfying 
\[  Y = \bigcup_{n=0}^{M-1} S^n K. \]
Take a continuous function $h:Y\to [0,1]$ satisfying 
$\supp(h)\subset U$ and $h=1$ on $K$.
Then this satisfies the required properties.
\end{proof}

Take $x\in Y$. We consider the Voronoi diagram with respect to the set  
\[ \left\{ \left(n, \frac{1}{h(S^n x)}\right)|\, n\in \mathbb{Z}, h(S^n x) \neq 0\right\} \subset \mathbb{R}^2.\]
For $n\in \mathbb{Z}$ with $h(S^n x) \neq 0$ we define 
\[ V(x,n) = \{u\in \mathbb{R}^2|\, \forall m\in \mathbb{Z}: |u-(n,1/h(S^n x))|\leq |u-(m, 1/h(S^m x))|\}.\]
If $h(S^n x) =0$ then we set $V(x,n) = \emptyset$.
These form a Voronoi partitioning of $\mathbb{R}^2$:
\[ \mathbb{R}^2 = \bigcup_{n\in \mathbb{Z}} V(x,n).\]
We consider $\mathbb{R} = \mathbb{R}\times \{0\}$ as a subset of $\mathbb{R}^2$ and set 
$I(x,n) = \mathbb{R}\cap V(x,n)$.
These intervals form a tiling of $\mathbb{R}$:
\[ \mathbb{R} = \bigcup_{n\in \mathbb{Z}} I(x,n).\]
See figure \ref{fig: Voronoi tiling}.
\begin{figure}
    \centering
    \includegraphics[,bb= 0 0 515 260]{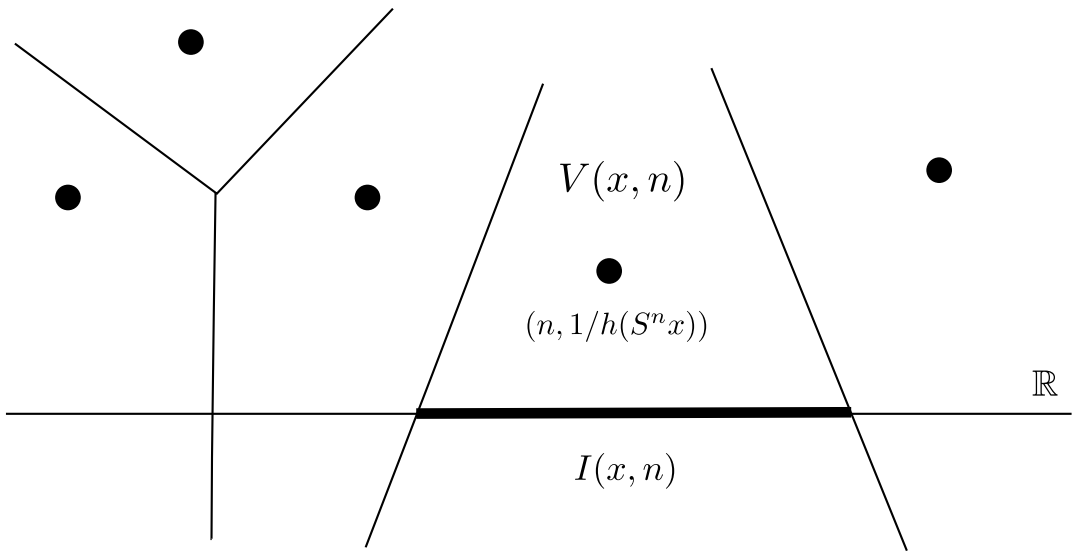}
    \caption{Voronoi diagram and $I(x,n)$.} \label{fig: Voronoi tiling}
\end{figure}
This construction is naturally \textit{dynamical}. Namely we have 
\[  I(Sx, n) = -1 + I(x,n+1). \]
The key point of the construction is that \textit{the interval $I(x,n)$ depends continuously on $x\in Y$}:
Suppose $I(x,n)$ is not a point. Then for any $\varepsilon>0$ the Hausdorff distance between $I(x,n)$ and $I(y,n)$ is less than 
$\varepsilon$ if $y\in Y$ is sufficiently close to $x$.

We define 
\[ \partial(x) = \bigcup_{n\in \mathbb{Z}} \partial I(x,n).\]
(Of course, $\partial [a,b] = \{a,b\}$.)
For $r>0$ we set 
$\partial_r [a,b] = [a-r,a+r] \cup [b-r,b+r]$.
($\partial_r \emptyset = \emptyset$.)
Set 
\[ \partial(x,r) = \bigcup_{n\in \mathbb{Z}} \partial_r I(x,n).\]

\begin{lemma} \label{lemma: properties of tiling}
(1) $I(x,n) \subset (n-M/2, n+M/2)$.

\noindent 
(2) For any $r>0$
\[ \limsup_{R\to \infty} \frac{1}{R} \sup_{a\in \mathbb{R}, x\in Y} \#\left([a,a+R]\cap \partial(x,r) \cap \mathbb{Z}\right) \leq \frac{4r+2}{L}.\]
Moreover (we denote by $|\cdot|$ the Lebesgue measure of $\mathbb{R}$)
\[ \limsup_{R\to \infty} \frac{1}{R}   \sup_{a\in \mathbb{R}, x\in Y} \left|[a,a+R]\cap \partial(x,r)\right| \leq \frac{4r}{L}. \]
\end{lemma}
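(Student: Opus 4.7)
The plan is to exploit the two conclusions of Lemma \ref{lemma: marker property}: the \textit{syndetic covering} $Y=\bigcup_{n=0}^{M-1} S^n(h^{-1}(1))$ controls the \emph{maximum} size of a Voronoi cell, while the \textit{separation} $\supp(h)\cap S^n(\supp(h))=\emptyset$ for $1\le n\le L$ controls the \emph{density} of non-empty cells.

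For part (1), fix $t\in I(x,n)$, so $(t,0)\in V(x,n)$. The covering condition implies that the set $A(x):=\{k\in\mathbb{Z}:h(S^k x)=1\}$ meets every block of $M$ consecutive integers, hence some $k\in A(x)$ satisfies $|t-k|\le M/2$. Comparing the two seeds $(n,1/h(S^n x))$ and $(k,1)$ via the defining Voronoi inequality gives
\[
(t-n)^2 + h(S^n x)^{-2} \;\le\; (t-k)^2 + 1 \;\le\; (M/2)^2 + 1.
\]
Since $0<h(S^n x)\le 1$ forces $h(S^n x)^{-2}\ge 1$, this rearranges to $(t-n)^2\le (M/2)^2$, yielding the desired inclusion.

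For part (2), I would first combine (1) with the separation condition. By (1), any non-empty cell meeting $[a-r,a+R+r]$ is indexed by an integer $n$ in an interval of length $R+M+2r$. Since the separation condition forces distinct indices in $\{k:h(S^k x)>0\}$ to differ by at least $L+1$, the number of such indices---and hence the number of non-empty cells meeting $[a-r,a+R+r]$---is at most $(R+M+2r)/(L+1)+1$.

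Each non-empty cell $I(x,n)=[\alpha_n,\beta_n]$ contributes $\partial_r I(x,n)=[\alpha_n-r,\alpha_n+r]\cup[\beta_n-r,\beta_n+r]$, of Lebesgue measure at most $4r$ and containing at most $4r+2$ integers. Subadditivity applied to the union $\partial(x,r)=\bigcup_n\partial_r I(x,n)$ gives
\[
|[a,a+R]\cap\partial(x,r)|\;\le\; 4r\left(\frac{R+M+2r}{L+1}+1\right),
\]
and similarly for the integer count with $4r+2$ replacing $4r$. Dividing by $R$ and letting $R\to\infty$, the additive contributions $M$, $2r$, $1$ wash out, producing the claimed limsups $4r/L$ and $(4r+2)/L$, uniformly in $a\in\mathbb{R}$ and $x\in Y$. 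The one mildly delicate step is the density bookkeeping---verifying that the separation of seeds genuinely bounds the density of non-empty cells---but this is immediate since every non-empty cell is indexed by a seed with $h(S^n x)>0$.
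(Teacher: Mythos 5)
Your proof is correct and follows essentially the same route as the paper: part (1) by comparing the seed $(n,1/h(S^n x))$ against a nearby height-one seed supplied by the covering property of Lemma \ref{lemma: marker property}, and part (2) by combining (1) with the fact that the separation property forces indices with $h(S^k x)\neq 0$ to be at least $L+1$ apart, so that only $O(R/L)$ cells contribute to $[a,a+R]\cap\partial(x,r)$. The only (immaterial) difference is that your argument in (1) yields the closed interval $[n-M/2,\,n+M/2]$ rather than the open one, which is harmless since every later use of the lemma only needs the bound $|t-n|\le M/2$ and the asymptotic counts in (2) are unaffected.
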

\begin{proof}
(1) By Lemma \ref{lemma: marker property} (2), 
there exist integers $l\leq n\leq m$ with $h(S^l x) = h(S^m x) =1$ and $n-l, m-n < M$.
Let $t\in I(x,n)$.
If $t\geq n$ then
\[  |(t,0)-(n,1/h(S^n x))| \leq |(t,0)-(m,1)| \]
implies $t-n<M/2$. In the same way, if $t<n$ then we get $n-t<M/2$.

(2) By the above (1) 
\[ [a,a+R] \subset \bigcup_{-M/2<n-a<R+M/2} I(x,n).\]
Therefore $[a,a+R]\cap \partial(x,r)$ is contained in
\[ [a,a+r]\cup [a+R-r,a+R] \cup  \bigcup_{-M/2<n-a<R+M/2} \partial_r I(x,n). \]
The number of integers $n \in (a-M/2,a+R+M/2)$ satisfying $h(S^n x) \neq 0$ is bounded  by 
\[ 1 + \frac{R+M}{L} \]
because $\supp(h)\cap S^m(\supp(h)) = \emptyset$ for $1\leq m\leq L$.
Thus $ \#\left([a,a+R]\cap \partial(x,r) \cap \mathbb{Z}\right)$ is bounded by 
\[ 2(r+1)  + 2(2r+1)\left(1+\frac{R+M}{L}\right).\]
Dividing this by $R$ and letting $R\to \infty$, we get the result.
Another statement can be proved in the same way.
\end{proof}

Now we have come to the core of the argument.
In the proof of Theorem \ref{thm: technical main theorem}
we will have the following dichotomy:
Take $m\in \mathbb{Z}$. 
The point $m$ is said to be \textit{wild} if $m \in \partial(x,L_0 -4)$.
Otherwise it is \textit{tame}.
Here $L_0-4$ is just a technical number.
Readers may think that a point is wild if it is close to $\partial(x)$.
Tame points can be handled easily.
A main problem is how to deal with wild points.

The following is the idea behind the above dichotomy.
It is not difficult to control band-limited functions over sufficiently long intervals.
But it is impossible to control them over short intervals because of their band-limited nature.
(Intuitively speaking, band-limited functions cannot have very small fluctuation.)
As a consequence, if the length of $I(x,n)$ is sufficiently larger than $L_0$ (which will be chosen appropriately later),
then we can construct a good perturbation of band-limited functions over it.
But if it is less than $L_0$, then we cannot construct a perturbation there.
So the problem is how to deal with short intervals.

We overcome the difficulty by introducing \textit{tax system}.
Long intervals are good for our perturbation procedure.
But some of them are unnecessarily long.
We consider $I(x,n)$ \textit{too long} if $|I(x,n)|>L_1$.
Then we take \textit{tax} $|I(x,n)|-L_1$ from too long intervals, and use it for the care of wild points.
If \textit{every lattice point becomes happy}, then the proof is done.

The next lemma is the basis of our tax system.
Intuitively, it means that \textit{the sum of tax is larger than the cost of social security}.
For $x\in \mathbb{R}$ we set $x^+ = \max(x,0)$.

\begin{lemma} \label{lemma: there are enough long intervals}
There exists an integer $R>M$ such that for all $x\in Y$ and $a\in \mathbb{R}$
\begin{equation} \label{eq: there are enough long intervals}
   \sum_{a\leq n\leq a+R} (|I(x,n)|-L_1)^+ \geq C \sum_{a\leq n\leq a+R} (L_0-\dist(n,\partial(x)))^+, 
\end{equation}   
where $\dist(n,\partial(x)) = \min_{t\in \partial(x)}|n-t|$.
In the above two sums, $n$ runs over $\mathbb{Z}\cap [a,a+R]$.
Intuitively the left-hand side is the sum of tax in the region $a\leq n\leq a+R$ and the right-hand side is the cost of 
social security there.
\end{lemma}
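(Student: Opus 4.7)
The plan is to compare asymptotic densities of the two sides of (\ref{eq: there are enough long intervals}) as $R\to\infty$ and then invoke (\ref{eq: choice of L}) to produce a single $R$ (independent of $x$ and $a$) for which the inequality holds.

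First I would lower-bound the left-hand side. Set $A = [a,a+R]\cap\mathbb{Z}$. By Lemma \ref{lemma: properties of tiling}(1), each $I(x,n) \subset (n-M/2,\, n+M/2)$, so every point of $[a+M/2,\, a+R-M/2]$ lies in some $I(x,n)$ with $n\in A$; since the intervals $I(x,n)$ tile $\mathbb{R}$ and have disjoint interiors, this gives $\sum_{n\in A}|I(x,n)| \geq R - M$. Moreover, $I(x,n)$ is nonempty only when $h(S^n x)\neq 0$, and Lemma \ref{lemma: marker property}(1) makes the set of such $n$ at least $(L+1)$-separated, so $N_{\mathrm{ne}} := \#\{n\in A : I(x,n)\neq\emptyset\} \leq 1 + R/(L+1)$. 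Using $(|I(x,n)|-L_1)^+ \geq |I(x,n)| - L_1$ when $I(x,n)\neq\emptyset$ and $(|I(x,n)|-L_1)^+ = 0$ otherwise,
\[
\sum_{n\in A}(|I(x,n)|-L_1)^+ \;\geq\; \sum_{n\in A}|I(x,n)| - L_1 N_{\mathrm{ne}} \;\geq\; R\!\left(1-\frac{L_1}{L+1}\right) - (M + L_1).
\]

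Next I would upper-bound the right-hand side. Each summand $(L_0-\dist(n,\partial(x)))^+$ is at most $L_0$ and vanishes outside $\partial(x,L_0)$, so
\[
C\sum_{n\in A}(L_0-\dist(n,\partial(x)))^+ \;\leq\; CL_0 \cdot \#\bigl(A\cap\partial(x,L_0)\cap\mathbb{Z}\bigr),
\]
and Lemma \ref{lemma: properties of tiling}(2) with $r=L_0$ bounds the last quantity by $\tfrac{4L_0+2}{L}R + o(R)$, uniformly in $x\in Y$ and $a\in\mathbb{R}$.

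Combining, the LHS has asymptotic density at least $1-L_1/(L+1)$ while the RHS has asymptotic density at most $CL_0(4L_0+2)/L$. The hypothesis (\ref{eq: choice of L}), $L > 4L_1+1+4CL_0(4L_0+3)$, forces both $L_1/(L+1) < 1/4$ and $CL_0(4L_0+2)/L < 1/4$, so the density gap is at least $1/2$; hence any sufficiently large $R > M$ absorbs the additive error $M + L_1$ together with the $o(R)$ term and yields (\ref{eq: there are enough long intervals}) for every $x\in Y$ and $a\in\mathbb{R}$. The only delicate point is that the $o(R)$ term be truly uniform under the supremum over $x$ and $a$, and this uniformity is exactly what Lemma \ref{lemma: properties of tiling}(2) provides, so no additional argument is needed.
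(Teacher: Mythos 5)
Your proof is correct, and the uniformity issue you flag at the end is indeed handled exactly as you say by Lemma \ref{lemma: properties of tiling}(2). The upper bound on the right-hand side is identical to the paper's. Where you diverge is the lower bound on the left-hand side: the paper keeps only the intervals with $|I(x,n)|\geq 2L_1$, uses $(|I(x,n)|-L_1)^+\geq |I(x,n)|/2$ on those, and lower-bounds their total length by subtracting the Lebesgue measure of $[a,a+R]\cap\partial(x,L_1)$ (the second half of Lemma \ref{lemma: properties of tiling}(2)); you instead keep all nonempty intervals, use $(|I(x,n)|-L_1)^+\geq |I(x,n)|-L_1$, and pay $L_1$ per nonempty interval, controlling the count of nonempty intervals via the $(L+1)$-separation of $\{n : h(S^nx)\neq 0\}$ coming from Lemma \ref{lemma: marker property}(1). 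Your variant is slightly more economical — it avoids the measure half of Lemma \ref{lemma: properties of tiling}(2) and the factor-of-two loss from the $2L_1$ threshold — and the constants in (\ref{eq: choice of L}) comfortably cover it, as you verify ($L_1/(L+1)<1/4$ and $CL_0(4L_0+2)/L<1/4$). Both routes are driven by the same density comparison, so the difference is one of bookkeeping rather than of substance.
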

\begin{proof}
By Lemma \ref{lemma: properties of tiling} (2), the right-hand side of (\ref{eq: there are enough long intervals}) is bounded by 
\[ CL_0 \#(\mathbb{Z}\cap [a,a+R]\cap \partial(x,L_0)) \leq CL_0R \frac{4L_0+3}{L} \]
for $R\gg 1$.
Let $A(x)$ be the set of integers $n$ with $|I(x,n)|\geq 2L_1$.
For $n\in A(x)$ we have $(|I(x,n)|-L_1)^+ \geq |I(x,n)|/2$.
So the left-hand side of (\ref{eq: there are enough long intervals}) is bounded from below by 
\[ \frac{1}{2} \sum_{n\in A(x)\cap [a,a+R]} |I(x,n)|. \]
By Lemma \ref{lemma: properties of tiling} (1) we have 
\[ [a+M/2, a+R-M/2]\subset \bigcup_{a\leq n\leq a+R} I(x,n).\]
Hence 
\[  [a+M/2, a+R-M/2]\setminus \partial(x,L_1) \subset \bigcup_{n\in A(x)\cap [a,a+R]} I(x,n), \]
\begin{equation*}
  \begin{split}
    \sum_{n\in A(x)\cap [a,a+R]} |I(x,n)|  &\geq R-M -|[a+M/2, a+R-M/2]\cap \partial(x,L_1)| \\
   & \geq  R-M - (R-M) \frac{4L_1+1}{L} \quad \text{for $R\gg1$ by Lemma \ref{lemma: properties of tiling} (2)} \\
   & \geq \frac{R}{2} \left(1-\frac{4L_1+1}{L}\right) \quad  (R\gg 1) \\
   & \geq 2CL_0R\frac{4L_0+3}{L} \quad \text{by (\ref{eq: choice of L})}.
  \end{split}
\end{equation*}
Combing the above estimates, we get (\ref{eq: there are enough long intervals}).
\end{proof}

The next lemma is our tax system.

\begin{lemma}  \label{lemma: construction of weight}
 There exists a continuous map (called \textbf{weight})
 \[ Y\to ([0,1]^{R+1})^{\mathbb{Z}}, \quad x\mapsto w(x) = (w_n)_{n\in \mathbb{Z}}, \quad 
     w_n = (w_{n0}, w_{n1}, \dots, w_{nR}) \in [0,1]^{R+1}, \]
     satisfying the following.
 
 \noindent 
 (1) The map is equivariant: $w_n(Sx) = w_{n+1}(x)$.

\noindent 
(2) If $|I(x,n)|\leq L_1$ then $w_n=(0,\dots, 0)$.

\noindent 
(3) For all $n\in \mathbb{Z}$ 
\[ \#\{m|\, w_{nm}>0\} \leq 1 + \frac{1}{C}  (|I(x,n)|-L_1)^+.\]

\noindent 
(4) For any $m\in \mathbb{Z}\cap \partial(x, L_0-4)$ there exists an integer $n$ with $m-R\leq n\leq m$ satisfying 
\[ w_{n,m-n} = 1.\]
\end{lemma}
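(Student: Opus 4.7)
The plan is to interpret the lemma as a continuous bipartite matching problem between the ``long'' intervals $I(x,n)$ with $|I(x,n)|>L_1$ (the taxpayers) and the wild lattice points $m\in\mathbb{Z}\cap\partial(x,L_0-4)$ (the recipients of social security). The assignment ``interval $n$ covers lattice point $n+k$'' will be encoded by declaring $w_{nk}=1$, so that condition (2) says short intervals contribute nothing, condition (3) bounds the number of lattice points each interval serves in terms of its tax capacity, and condition (4) demands that every wild lattice point be served by some eligible interval within the window $[m-R,m]$. The equivariance in (1) reduces everything to defining a single continuous map $w_0:Y\to[0,1]^{R+1}$ and then setting $w_n(x):=w_0(S^n x)$.

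Feasibility of the matching for each fixed $x$ follows from Lemma 6.3. Since any wild lattice point $n\in\partial(x,L_0-4)$ contributes at least $4$ to $(L_0-\dist(n,\partial(x)))^+$, inequality (6.5) combined with the trivial $\lfloor 1+t\rfloor\geq t$ gives, on every window of length $R+1$,
\[
4\cdot\#\bigl(\mathbb{Z}\cap[a,a+R]\cap\partial(x,L_0-4)\bigr)\leq\frac{1}{C}\sum_{a\leq n\leq a+R}(|I(x,n)|-L_1)^+\leq\sum_{a\leq n\leq a+R}\Bigl\lfloor 1+\tfrac{1}{C}(|I(x,n)|-L_1)^+\Bigr\rfloor.
\]
Because a wild $m$ is only allowed to be served by $n\in[m-R,m]$, this windowed inequality (applied to tilings of any larger interval by windows of length $R+1$) yields the Hall condition for the resulting interval-type bipartite graph, and a feasible integer matching exists; concretely, a left-to-right greedy rule that assigns each wild $m$, in order, to the leftmost $n\in[m-R,m]$ with $|I(x,n)|>L_1$ and remaining capacity will produce one. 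The factor-of-$4$ slack in the capacities will matter below.

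The main obstacle is that this purely discrete assignment will in general jump as $x$ varies: when some $|I(x,n)|$ crosses $L_1$, when $\dist(m,\partial(x))$ crosses $L_0-4$, or when the greedy priorities reorganize. I would remedy this by smoothing all such transitions through intermediate values in $[0,1]$. Introduce a continuous cutoff $\chi(x,n)\in[0,1]$ that vanishes for $|I(x,n)|\leq L_1$ and equals $1$ once $|I(x,n)|$ exceeds $L_1$ by some fixed margin, and a similar cutoff that turns off as a lattice point leaves wildness; multiply each would-be weight by the appropriate cutoff. During a handoff in which an outgoing interval $n_1$ and an incoming interval $n_2$ both want to serve the same $m$, ramp $w_{n_2,m-n_2}$ up to $1$ first and only then ramp $w_{n_1,m-n_1}$ down to $0$. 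The factor-of-$4$ slack in the Hall condition absorbs these temporary overlaps in (3), and Lemma 6.2(1) guarantees that $w_0(x)$ depends only on finitely many $S^k x$ with $|k|$ bounded by a constant, so the coordination problem is locally finite-dimensional and can be resolved by an explicit piecewise-linear interpolation; verifying continuity, equivariance, and the three bullet conditions simultaneously is the only genuinely delicate step.
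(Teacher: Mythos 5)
Your reduction to a bipartite matching between long intervals and wild lattice points is a reasonable reading of the lemma, and your feasibility argument for each \emph{fixed} $x$ is essentially sound (it is the same counting as Lemma \ref{lemma: there are enough long intervals}). But the proof has a genuine gap exactly at the point you flag as ``the only genuinely delicate step'': the passage from a discrete integer matching to a \emph{continuous, equivariant} weight is not constructed, and the sketch you give does not obviously work. A greedy left-to-right matching is a global combinatorial object: when a single quantity crosses a threshold (an $|I(x,n)|$ passing $L_1$, a point entering or leaving $\partial(x,L_0-4)$), the reassignment is not local --- it can trigger a cascade of handoffs propagating down the line, so there is no well-defined ``outgoing interval $n_1$ and incoming interval $n_2$'' to coordinate; identifying them already requires the discontinuous matching you are trying to smooth. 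Moreover condition (4) demands that at \emph{every} $x$, including mid-transition, some $w_{n,m-n}$ equals exactly $1$, so the ``ramp up before ramping down'' rule must be synchronized across all simultaneous and cascading handoffs, consistently with equivariance; the factor-of-$4$ slack bounds total capacity but does not by itself produce such a synchronization. As written, the cutoff-and-ramp scheme is a restatement of the difficulty rather than a solution.

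The paper avoids the integer matching entirely. It first builds a \emph{fractional} transport $v_{nm}\geq 0$ by an explicit water-filling recursion, $v_{nm}=\min(a_n^{(m)},b_{n+m}^{(m)})$ with $a_n^{(0)}=(|I(x,n)|-L_1)^+/C$ and $b_n^{(0)}=(L_0-\dist(n,\partial(x)))^+$; being built from $\min$ and subtraction applied to continuous functions of $x$, this is automatically continuous and equivariant, and Lemma \ref{lemma: there are enough long intervals} shows every point is fully served within $R$ steps. The discreteness needed for (4) is then recovered \emph{after the fact} by the amplification map $A$ (which multiplies the last nonzero tail entry of $v_n$ by $R$) followed by the threshold $\beta$ (killing values $\leq 1$, saturating values $\geq 2$ to exactly $1$): since a wild point receives total mass $\geq 4$, either its first donor gives $\geq 2$ outright, or some later donor that has just exhausted its capacity gives $>2/R$ and gets amplified to $>2$, producing an exact $1$; the inequality $\#\{m:\,y_m>1\}\leq 1+\#\{m:\,x_m>1\}$ preserves (3). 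If you want to salvage your approach, this is the idea you are missing: solve the problem continuously with real-valued weights first, and only then discretize pointwise by a fixed continuous nonlinearity, rather than trying to make a discrete solution vary continuously.
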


Before proving the lemma, we explain its intuitive meaning.
The weight $w_n$ is the tax paid by the interval $I(x,n)$.
The entry $w_{nm}$ is the (rescaled) money taken from $I(x,n)$ which is used for the care of the point $n+m\in \mathbb{Z}$.
A point $l\in \mathbb{Z}$ \textit{becomes happy} if there exist $n$ and $m\in [0,R]$ satisfying $n+m=l$ and $w_{nm}=1$.
The condition (2) means that intervals of length $\leq L_1$ do not pay tax.
The condition (3) (roughly) means that the tax taken from $I(x,n)$ does not exceed $|I(x,n)|-L_1$.
The condition (4) means that \textit{we achieve the perfect social welfare}, namely every wild point becomes happy.
($w_{n,m-n}=1$ implies that the point $m$ becomes happy.)

\begin{proof}
Take $x\in Y$.
Set $a_n^{(0)} = (|I(x,n)|-L_1)^+/C$ and $b_n^{(0)} = (L_0-\dist(n,\partial(x)))^+$.
We define $v_{nm}\geq 0$ for $n\in \mathbb{Z}$ and $m\geq 0$
inductively (with respect to $m$) by 
\[ v_{nm} = \min(a_n^{(m)},\, b_{n+m}^{(m)}), \quad 
   a_n^{(m+1)} = a_n^{(m)} -  v_{nm}, \quad b_n^{(m+1)} = b_n^{(m)} - v_{n-m, m}.\]
The heuristic idea behind this process is as follows:
The intervals $I(x,n)$ are donors of tax, and the integral points on the line are receivers.
$a_n^{(0)}$ is the money that the interval $I(x,n)$ can pay as the tax, 
and $b_n^{(0)}$ is the money we need for the care of the point $n$.
At the $m$-th step of the process, the interval $I(x,n)$ pays $v_{nm}$ and we use it for the point $n+m$.
After the $m$-th step, $I(x,n)$ still have the extra money $a_n^{(m+1)}$ and points $n$ still need $b_n^{(m+1)}$.
At each step, $I(x,n)$ pays \textit{as much as possible}.
Namely, if $a_n^{(m)}\geq b_{n+m}^{(m)}$ then $I(x,n)$ pays $b_{n+m}^{(m)}$ and the point $n+m$ becomes satisfied.
If $a_n^{(m)}<b_{n+m}^{(m)}$ then $I(x,n)$ pays $a_n^{(m)}$ and loses all its ability to help integral points.

Every point is satisfied after the $R$-th step:
\[ b_n^{(m)} = v_{nm} = 0 \quad (m>R).\]
This is because the condition $b_n^{(m)}>0$ implies 
\[ \sum_{n-m+1\leq k\leq n} a_k^{(0)} < \sum_{n-m+1\leq k\leq n} b_k^{(0)}, \]
which is impossible for $m=R+1$ by Lemma \ref{lemma: there are enough long intervals}.

We set $v_n = (v_{n0}, \dots, v_{nR})$.
This construction is equivariant: $v_n(Sx) = v_{n+1}(x)$.
Moreover for all integers $n$
\begin{equation}  \label{eq: conditions of v_n}
    C\sum_{m=0}^R v_{nm} \leq (|I(x,n)|-L_1)^+, \quad 
   \sum_{m=0}^R v_{n-m,m} = (L_0-\dist(n,\partial(x))^+.
\end{equation}  
The former inequality holds because $\sum_{m=0}^R v_{nm}$ is the tax paid by $I(x,n)$ and does not exceed $a_n^{(0)}$.
The latter equality holds because every point is satisfied after the $R$-th step.

We choose continuous functions $\alpha:\mathbb{R}\to [1,R]$ and $\beta: \mathbb{R}\to [0,1]$ satisfying 
\[ \alpha(0)=R, \> \alpha(t) = 1 \> (t\geq 1), \quad 
    \beta(t) = 0 \> (t\leq 1), \> \beta(t)=1 \> (t\geq 2).  \]
For $(x_0,x_1,\dots,x_R)\in \mathbb{R}^{R+1}$ we define 
$A(x_0,x_1,\dots,x_R) = (y_0, y_1,\dots,y_R)\in \mathbb{R}^{R+1}$ by 
\begin{equation*}
  \begin{split}
    &y_R = Rx_R  ,\quad  y_{R-1} = \alpha(y_R)x_{R-1}, \quad  y_{R-2} = \alpha(\max(y_R, y_{R-1}))x_{R-2}, \quad \dots, \\
    &y_0 = \alpha(\max(y_R ,\dots, y_1))x_0. 
  \end{split}
\end{equation*}  
Note that this definition implies 
\begin{equation}  \label{eq: property of A}
    \#\{m|\, y_m >1\} \leq 1 + \#\{m|\, x_m>1\}
\end{equation}
Define $B:\mathbb{R}^{R+1}\to [0,1]^{R+1}$ by $B(y_0,\dots, y_R) = (\beta(y_0),\dots,\beta(y_R))$.
We define $w(x) = (w_n)_{n\in \mathbb{Z}}$ by $w_n = B(A(v_n))$. 
We check the required conditions.
The continuity and equivariance are obvious.
The condition (2) follows from the former inequality of (\ref{eq: conditions of v_n}).
This inequality with the help of (\ref{eq: property of A}) also implies the condition (3):
\[ \#\{m|\, w_{nm}>0\} \leq 1 + \#\{m|\, v_{nm}>1\} \leq 1+ \frac{1}{C}(|I(x,n)|-L_1)^+.\]
For the condition (4), take $m \in \mathbb{Z}\cap \partial(x,L_0-4)$. 
Set 
\[ n_0 = \min\{n|\, v_{n,m-n}>0\}.\]
We have $m-R\leq n_0\leq m$.
If $v_{n_0, m-n_0}\geq 2$ then $w_{n_0,m-n_0}=1$.
So we assume $v_{n_0,m-n_0}<2$.
From the latter equality of (\ref{eq: conditions of v_n}) and $\dist(m, \partial(x))\leq L_0-4$, 
\[ \sum_{n=n_0+1}^m v_{n, m-n}  = (L_0-\dist(m, \partial(x)))^+ -v_{n_0,m-n_0} > 2.\]
Since $m-n_0\leq R$, there exists $n_0<n_1 \leq m$ satisfying $v_{n_1,m-n_1} > 2/R$.
The condition $v_{n_0,m-n_0}>0$ with $n_0<n_1$ implies that 
the point $m$ is not satisfied after the $(m-n_1)$-th step and that
the interval $I(x,n_1)$ finishes to pay all its tax at the $(m-n_1)$-th step.
Thus we have $v_{n_1, k}= 0$ for $k>m-n_1$.
Then the definitions of $A$ and $B$ imply $w_{n_1,m-n_1}=1$.
This shows the condition (4).
\end{proof}

\section{Proof of Theorem \ref{thm: technical main theorem}} \label{section: proof of technical main theorem}

In this section we combine all the preparations and prove Theorem \ref{thm: technical main theorem}.
Throughout this section we assume the following. 

\begin{itemize}
     \item   $a<b$ are two real numbers. 
     \item   $(Y,S)$ is a dynamical system having the marker property.
     \item   $\Phi:(X,T)\to (Y,S)$ is an extension with $\mdim(X,T)<b-a$.
\end{itemize}

For the convenience of readers, we restate Theorem \ref{thm: technical main theorem}:
\begin{theorem}[=Theorem \ref{thm: technical main theorem}]
For a dense $G_\delta$ subset of $f\in C_T\left(X,B_1(V[a,b])\right)$ the map 
\[  (f,\Phi): X\to B_1(V[a,b])\times Y, \quad x\mapsto (f(x), \Phi(x)) \]
is an embedding.
\end{theorem}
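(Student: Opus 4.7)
The plan is to follow the template of Theorem \ref{thm: toy-model}, adapted to the continuous-signal setting along the lines of Section \ref{subsection: ideas of the proof}. Fix a compatible metric $d$ on $X$. Because the $\varepsilon$-embedding condition is open in the compact--open topology and $X$ is compact, the set
\[
\mathcal{E}_\varepsilon = \bigl\{f \in C_T(X,B_1(V[a,b])) : (f,\Phi) \text{ is an } \varepsilon\text{-embedding w.r.t. } d\bigr\}
\]
is open in $C_T(X,B_1(V[a,b]))$. Since a continuous map is an embedding if and only if it is an $\varepsilon$-embedding for every $\varepsilon>0$, the theorem will follow once we establish that each $\mathcal{E}_\varepsilon$ is dense and intersect over $\varepsilon=1/k$.

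Density reduces to the continuous-signal analogue of Proposition \ref{prop: toy-model}: given $f_0 \in C_T(X,B_1(V[a,b]))$ and $\delta>0$, construct $g$ within uniform $\boldsymbol{d}$-distance $\delta$ of $f_0$ such that $(g,\Phi)$ is a $\delta$-embedding with respect to $d$. Choose $\varepsilon\in(0,\delta)$ such that $d(x,y)<\varepsilon$ forces $f_0(x)$ and $f_0(y)$ to agree on a sufficiently long window in $L^\infty$ to within $\delta$. The hypothesis $\mdim(X,T)<b-a$ then gives, for each sufficiently large $N$, a simplicial complex $P_N$ of dimension less than $N(b-a)$ with an $\varepsilon$-embedding $\pi_N:X\to P_N$ with respect to the metric $d_N$. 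Applying Lemma \ref{lemma: approximation by linear map}, Corollary \ref{cor: embeddings are dense} and the parametric genericity of Lemma \ref{lemma: parametric genericity of embedding}, I would replace $\pi_N$ by a simplicial embedding $G_N:P_N\to \mathbb{C}^{m(N)}$ into a target of appropriate size which approximates the integer-sample values of $f_0$ to within $\delta$ and which remains an embedding under the linear blending against those samples used later.

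The principal obstacle, as announced in Section \ref{subsection: ideas of the proof}, is to glue these fixed-length perturbation maps $G_N$ into a single equivariant continuous $g:X\to B_1(V[a,b])$. Here I invoke the dynamical Voronoi apparatus of Section \ref{section: marker property and Voronoi tiling}: Lemma \ref{lemma: marker property} combined with the planar Voronoi construction produces a continuously varying tiling $\mathbb{R}=\bigcup_{n} I(\Phi(x),n)$. On each long tile ($|I(\Phi(x),n)|\geq L_0$) one attaches the matching $G_N$ to perturb the integer samples lying inside it; on short tiles there is no room to perturb, and the shortfall is absorbed by the tax system of Lemmas \ref{lemma: there are enough long intervals} and \ref{lemma: construction of weight}, whose weight function $w(x)=(w_{nm})$ redirects spare perturbation capacity from tiles of length exceeding $L_1$ to the wild integer points $m\in\partial(\Phi(x),L_0-4)$ near tile boundaries. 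The resulting sample values on a lattice $\Lambda\subset\mathbb{R}$ satisfying Condition \ref{condition: interpolation} are then assembled into a bounded band-limited function in $V[a,b]$ via the interpolation kernel $\varphi_\Lambda$ from \eqref{eq: rapidly decreasing interpolation}; the rapid decay \eqref{eq: damping of rapidly decreasing interpolation} together with the continuity in $\Lambda$ from Lemma \ref{lemma: continuity of rapidly decreasing interpolation} deliver the continuity of $g$ and its $L^\infty$-closeness to $f_0$. Finally, the $\delta$-embedding property is verified as in Proposition \ref{prop: toy-model}: given $g(x)=g(x')$ and $\Phi(x)=\Phi(x')$, one pins an integer near $0$ which is either tame---so a direct restriction of some $G_N$ witnesses $d(x,x')<\varepsilon$---or wild, in which case Lemma \ref{lemma: construction of weight}(4) guarantees a long tile within distance $R$ whose activated $G_N$ again witnesses $d(x,x')<\varepsilon$.
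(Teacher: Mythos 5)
Your proposal follows the paper's overall architecture faithfully: the Baire-category reduction to density of $\varepsilon$-embeddings, the use of $\mdim(X,T)<b-a$ to produce $\varepsilon$-embeddings of $(X,d_{nN})$ into low-dimensional complexes, the planar Voronoi tiling from Section \ref{section: marker property and Voronoi tiling}, the tax/weight machinery of Lemmas \ref{lemma: there are enough long intervals} and \ref{lemma: construction of weight}, the interpolation kernel $\varphi_\Lambda$ for assembling samples into a band-limited function, and the tame/wild case split in the final injectivity check.

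The gap is in how you attach the perturbation maps to the tiles. You write that you will ``attach the matching $G_N$'' on each long tile, and that the simplicial maps ``remain an embedding under the linear blending against those samples used later,'' but this is precisely the point the paper flags as Difficulty 4 in Section \ref{subsection: ideas of the proof}, and nothing in your sketch resolves it. The tiles $I(\Phi(x),n)$ vary continuously with $x$, so the integers $r_{x,n}$ and $s_{x,n}$ marking the first and last lattice block inside a tile jump when an endpoint crosses a multiple of $N$. Attaching a single fixed map $G_N$ to a tile therefore produces a discontinuous $g$. The paper handles this with two devices you do not reproduce: (i) a family $\{F_{n,r}\}_{-M\le r\le M}$ of simplicial maps constructed by successive perturbations (Lemma \ref{lemma: successive perturbation}) with a built-in compatibility --- condition (2) there says exactly that the convex combination $(1-c)F_{n,r}(\cdot)|_{[N,n'N)}+cF_{n,r+1}(\cdot)|_{[0,(n'-1)N)}$ is an $\varepsilon'$-embedding for \emph{every} $c\in[0,1]$, which is what lets one interpolate across the boundary crossing; and (ii) the averaging of $g(x,\boldsymbol{\theta})$ over $\boldsymbol{\theta}\in\prod_{n}\{0,1\}^2$ with respect to the product measure $\mu_x$, which smooths the residual jump at $c_{x,n}=0$ or $c'_{x,n}=0$. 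Your appeal to Lemma \ref{lemma: continuity of rapidly decreasing interpolation} covers continuity of $\varphi_\Lambda$ in $\Lambda$ only; it says nothing about the continuity of the sampled perturbation values themselves, which is where the discontinuity actually lives. Consequently the final $\delta$-embedding argument also cannot be carried out as stated: the paper's Cases 1 and 2 depend on the formula \eqref{eq: formula for g}, which exists only because of the $F_{n,r}$-blending, and on condition (2) of Lemma \ref{lemma: successive perturbation} applied with the appropriate $W_k\subset W$; ``a direct restriction of some $G_N$'' is not what is happening there.

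A secondary, smaller omission: in the wild case you invoke Lemma \ref{lemma: construction of weight}(4) to find a long tile nearby, but the mechanism by which that tile ``witnesses $d(x,x')<\varepsilon$'' is that the coordinate $w_{n,-n}\pi(x)$ of $y_{x,n}\in(CQ)^{R+1}$ equals $\pi(x)$ itself and enters the domain of $F_{M,r}$ as a factor of $W_k$; the injectivity then uses the first inequality of \eqref{eq: choice of varepsilon'} rather than the restriction of a map on $P^n$. Spelling this out is not optional --- it is the whole point of carrying the cone $CQ$ and the weight $w$ through the construction.
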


\subsection{Setting of the proof} \label{subsection: setting}

We fix positive numbers $l,\rho,\tau$ satisfying the following.
\begin{itemize}
  \item $\rho\in \mathbb{Q}$ and $\mdim(X,T)<\rho<b-a$.
  \item $l\in \mathbb{N}$ and $l\rho\in \mathbb{N}$.
  \item $\rho+\tau < b-a$.
\end{itemize}
We use these $l,\rho,\tau$ for the construction of the interpolation function $\varphi_\Lambda$ in (\ref{eq: rapidly decreasing interpolation}).
Let $K=K(l,\rho,\tau)$ be the positive number introduced in (\ref{eq: damping of rapidly decreasing interpolation}).
We denote the distance on $X$ by $d$.
Recall that for a natural number $N$ we defined 
\[ d_N(x,y) = \max_{0\leq n <N} d(T^n x, T^n y).\]

For the proof of Theorem \ref{thm: technical main theorem}, it is enough to prove that the set
\[ \bigcap_{n=1}^\infty \left\{ f\in C_T(X,B_1(V[a,b]))|\, \text{$(f,\Phi)$ is a $(1/n)$-embedding w.r.t. $d$}\right\} \]
is dense and $G_\delta$ in $C_T(X,B_1(V[a,b]))$.
This is obviously $G_\delta$ because ``$(1/n)$-embedding'' is an open condition.
So the task is to prove the next proposition.
Its proof occupies all the rest of the paper.

\begin{proposition}  \label{prop: main proposition}
For any positive number $\delta$ and $f\in C_T(X,B_1(V[a,b]))$, 
there exists $g\in C_T(X, B_1(V[a,b]))$ satisfying the following two conditions.

\noindent 
(1) For all $x\in X$ and $t\in \mathbb{R}$, $|f(x)(t)-g(x)(t)| < \delta$.

\noindent 
(2) $(g, \Phi): X\to B_1(V[a,b])\times Y$ is a $\delta$-embedding with respect to $d$.
\end{proposition}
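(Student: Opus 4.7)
I would start by choosing $\varepsilon \in (0,\delta)$ so small that $d(x,y)<\varepsilon$ forces $\boldsymbol{d}(f(x),f(y))<\delta/4$, and then use $\mdim(X,T)<\rho<b-a$ to obtain, for every sufficiently large integer $N$, an $\varepsilon$-embedding $\pi_N:(X,d_N)\to P_N$ into a simplicial complex with $\dim P_N\le \lfloor \rho N\rfloor$. Lemmas \ref{lemma: approximation by linear map} and \ref{lemma: parametric genericity of embedding} then supply simplicial $\delta/2$-approximations $G_N:P_N\to \mathbb{R}^{2\lfloor\rho N\rfloor+2}$ whose convex combinations with any linear perturbation of the relevant $f$-data remain embeddings. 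With parameters $C,L_0,L_1$ chosen to dominate the interpolation constant $K$ from (\ref{eq: damping of rapidly decreasing interpolation}), I would then invoke the Voronoi construction of Section \ref{section: marker property and Voronoi tiling} on the factor $(Y,S)$ to obtain the continuous tiling $\mathbb{R}=\bigcup_n I(x,n)$ together with the weight map $w(x)$ from Lemma \ref{lemma: construction of weight}.

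\textbf{Construction of $g$.} For each $x\in X$ I would place inside each interval $I(x,n)$ a multiset of sample points of density $\rho$, yielding a configuration $\Lambda(x)\subset\mathbb{R}$ satisfying Condition \ref{condition: interpolation}(1)--(2); long tiles ($|I(x,n)|>L_1$) additionally contribute ``tax'' points whose presence is dictated by the non-zero coordinates of $w_n(x)$. To each base point $\lambda\in \Lambda(x)\cap I(x,n)$ I would attach a coefficient $c_\lambda(x)$ drawn from a coordinate of $G_{N(x,n)}\bigl(\pi_{N(x,n)}(T^{\alpha_{x,n}}x)\bigr)$ for an integer $N(x,n)$ close to $|I(x,n)|$, and to each tax point carrying weight for a wild lattice point $m\in\mathbb{Z}\cap\partial(x,L_0-4)$ the analogous coefficient drawn from data centered at $T^m x$. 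Then set
\[
g(x)(t)=f(x)(t)+\eta\, e^{2\pi\sqrt{-1}\left(\frac{a+b}{2}\right)t}\sum_{\lambda\in\Lambda(x)}c_\lambda(x)\,\varphi_{\Lambda(x)-\lambda}(t-\lambda),
\]
with $\eta>0$ small enough that the decay bound $|\varphi_{\Lambda}(t)|\le K/(1+t^2)$ forces $\norm{g(x)}_{L^\infty(\mathbb{R})}\le 1$. Since $\rho+\tau<b-a$, the modulation places the perturbation inside $V[a,b]$; equivariance $g\circ T=\sigma\circ g$ is automatic from $\Lambda(Tx)=\Lambda(x)-1$, while continuity of $g$ in $x$ follows from Lemma \ref{lemma: continuity of rapidly decreasing interpolation} combined with the Hausdorff continuity of the tiles $I(x,n)$.

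\textbf{Separation.} Suppose $g(x)=g(x')$ and $\Phi(x)=\Phi(x')$. The latter equality forces $\Lambda(x)=\Lambda(x')$ and $w(x)=w(x')$, so evaluating $g(x)-g(x')$ at each $\lambda\in\Lambda(x)$ and using $\varphi_{\Lambda(x)-\lambda}(\mu-\lambda)=0$ for $\mu\in\Lambda(x)\setminus\{\lambda\}$ yields $c_\lambda(x)=c_\lambda(x')$ for every $\lambda$. A tile-by-tile analysis mimicking the proof of Proposition \ref{prop: toy-model} then applies: if $0$ lies in a tame long tile $I(x,n)$, the base samples in that tile recover $\pi_{N(x,n)}(T^{\alpha_{x,n}}x)$ and hence $d(x,x')<\varepsilon$; if $0$ is wild, Lemma \ref{lemma: construction of weight}(4) produces a tax point in some $I(x,n_1)$ with $n_1\in[-R,0]$ whose attached coefficient, together with the parametric embedding guarantee of Lemma \ref{lemma: parametric genericity of embedding}, recovers the relevant iterate of $x$ and again forces $d(x,x')<\varepsilon<\delta$.

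\textbf{Main obstacle.} The hard part will be implementing Difficulty 4: the configuration $\Lambda(x)$ and the indexing of coefficients $c_\lambda(x)$ must vary continuously in $x$ even when $|I(x,n)|$ crosses integer thresholds or when a fixed time-point switches between a long and a short tile. Lemma \ref{lemma: parametric genericity of embedding} is the essential device here --- it ensures that the simplicial embeddings used on neighboring tile lengths can be chosen so that every convex interpolation between them remains an embedding --- but orchestrating this across the continuous deformation of the Voronoi combinatorics, while simultaneously enforcing the global bound $\norm{g(x)}_{L^\infty(\mathbb{R})}\le 1$ using only the local decay estimate (\ref{eq: damping of rapidly decreasing interpolation}), is the technically demanding core of the argument and is precisely what Section \ref{subsection: successive perturbations} is designed to handle.
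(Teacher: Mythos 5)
Your overall scaffolding is right (Voronoi tiles, weights, interpolation kernels, tame/wild dichotomy), but the construction as written has two genuine gaps, and they are not merely technical.

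\textbf{The separation step is broken.} You define $g(x)(t)=f(x)(t)+\eta\, e^{2\pi\sqrt{-1}\frac{a+b}{2}t}\sum_{\lambda}c_\lambda(x)\varphi_{\Lambda(x)-\lambda}(t-\lambda)$ where $c_\lambda(x)$ is a \emph{raw} coordinate of $G_{N(x,n)}(\pi_{N(x,n)}(T^{\alpha_{x,n}}x))$. Evaluating at a sample point then gives $g(x)(\lambda)=f(x)(\lambda)+\eta\, e^{\cdots}c_\lambda(x)$. From $g(x)=g(x')$ and $\Phi(x)=\Phi(x')$ (so $\Lambda(x)=\Lambda(x')$) you only obtain $f(x)(\lambda)-f(x')(\lambda)=-\eta\, e^{\cdots}(c_\lambda(x)-c_\lambda(x'))$; you cannot conclude $c_\lambda(x)=c_\lambda(x')$ because $f(x)(\lambda)$ and $f(x')(\lambda)$ are a priori unrelated. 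Shrinking $\eta$ only makes this worse. The paper resolves this by taking the perturbation coefficient to be the \emph{difference} $u(x,\boldsymbol\theta,n,\lambda)=G_r(\cdots)(\cdots)-f(x)(\lambda)$; then $g(x,\boldsymbol\theta)(\lambda)$ equals the $G$-target exactly, so $g(x)=g(x')$ directly forces equality of the $G$-data, and only then does the $\varepsilon'$-embedding property of the simplicial maps kick in. Without this subtraction your recovery argument has no valid first step.

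\textbf{The continuity mechanism is missing.} You correctly identify that $\Lambda(x)$ and the indexing of $c_\lambda(x)$ jump when tile endpoints cross lattice thresholds, and you point to Lemma \ref{lemma: parametric genericity of embedding} as the ``essential device.'' But that lemma addresses a different issue entirely --- it ensures convex combinations of simplicial maps remain embeddings; it says nothing about continuity of $x\mapsto g(x)$. The paper's actual device for continuity is to \emph{randomize the rounding}: it integrates $g(x,\boldsymbol\theta)$ against a product probability measure $\mu_x=\prod_n\mu_{x,n}$ on $\prod_{n\in E(x)}\{0,1\}^2$, where $\mu_{x,n}$ weights the two possible roundings of $\alpha_{x,n}/N$ and $\beta_{x,n}/N$ by $c_{x,n}$, $1-c_{x,n}$. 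As $x$ moves and an endpoint crosses a multiple of $N$, the weight shifts continuously from one rounding to the other, so the jump in $g(x,\boldsymbol\theta)$ is smoothed out by the average. This averaging is also exactly what produces the convex combination in formula (\ref{eq: formula for g}), and it is \emph{this} convex combination --- not an abstract appeal to continuity --- that Lemma \ref{lemma: successive perturbation}(2) is engineered to control. Your construction has no analogue of $\mu_x$, so $g$ would be discontinuous, and even if you patched continuity by averaging you would then need the ``successive perturbation'' lemma to guarantee the averaged map still separates points; that lemma does not appear in your argument at all.

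In short: the two fixes needed are precisely the two nontrivial structural ideas of Section \ref{section: proof of technical main theorem} --- coefficients equal to differences from $f$ (rather than raw $G$-values), and randomized rounding of tile endpoints with a $\boldsymbol\theta$-average that commutes with Lemma \ref{lemma: successive perturbation}(2). As written, your Steps ``Construction of $g$'' and ``Separation'' cannot be completed.
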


Fix $\delta>0$ and $f\in C_T(X,B_1(V[a,b]))$.
We can assume $|f(x)(t)|\leq 1-\delta$ for all $x\in X$ and $t\in \mathbb{R}$
by replacing $f$ with $(1-\delta)f$ if necessary.
We choose $\delta'>0$ so that 
if a subset $\Lambda\subset \mathbb{R}$ satisfies 
\[  |\lambda-\lambda'|\geq \frac{1}{\rho}, \quad (\forall \lambda, \lambda'\in \Lambda \text{ with $\lambda\neq \lambda'$}),   \]
then 
\begin{equation}  \label{eq: choice of delta'}
    \delta' \cdot \sum_{\lambda\in \Lambda} \frac{K}{1+|t-\lambda|^2} < \delta \quad (\forall t\in \mathbb{R}).
\end{equation}
We choose $0<\varepsilon <\delta$ so that 
\begin{equation}  \label{eq: choice of varepsilon}
   d(x,y) < \varepsilon \Longrightarrow \forall t\in [0,1]:  |f(x)(t)-f(y)(t)| < \delta'.
\end{equation}

We can find a simplicial complex $Q$ with an $\varepsilon$-embedding $\pi:X\to Q$ with respect to $d$.
Let $CQ = [0,1]\times Q/\{0\}\times Q$ be the cone over $Q$. 
For $(t,x)\in [0,1]\times Q$ we denote its equivalence class by $tx\in CQ$.
We set $* = 0x$ (the vertex of the cone).
The cone $CQ$ will be used for \textit{the care of wild points}.

From $\mdim(X,T) < \rho$ there are an integer $N>1$ with $\rho N\in \mathbb{N}$, 
a simplicial complex $P$ of dimension less than $\rho N$ and an 
$\varepsilon$-embedding $\Pi:X\to P$ with respect to $d_{N}$.
For a natural number $n$ we set 
\[ \Pi_n: X\to P^n, \quad x\mapsto (\Pi(x), \Pi(T^Nx), \dots, \Pi(T^{(n-1)N}x)), \]
which is an $\varepsilon$-embedding with respect to $d_{nN}$.
The space $P^n$ will be used for constructing perturbations over long intervals.
The number $n$ will be chosen so large that \textit{the perturbations can fit intervals of various length}.

We choose natural numbers $C_1,C_2$ and a sequence of integers $2<n_0<n_1<n_2<\dots$ satisfying 
$n_k < C_1 k+C_2$ and 
\begin{equation} \label{eq: conditions of n_k}
   \forall n\geq n_k:  n\dim P + k \dim CQ + 1 \leq (n-1)\rho N.
\end{equation}
Here we have used $\dim P < \rho N$.

We set 
\[ C = C_1 N, \quad L_0 = n_0N+4, \quad L_1 = C_1 N + C_2N + 2N.\]
We apply to $(Y,S)$ the construction of Section \ref{section: marker property and Voronoi tiling} with respect to these 
$C,L_0,L_1$.
Then we get natural numbers 
\[     R>M>L> 4L_1+1 + 4CL_0(4L_0+3),   \]
the tiling $\mathbb{R} = \bigcup_{n\in \mathbb{Z}}I(x,n)$ and 
the weight $w(x) = (w_n)_{n\in \mathbb{Z}}\in ([0,1]^{R+1})^{\mathbb{Z}}$ for each $x\in Y$.

\begin{lemma} \label{lemma: number of nontrivial weights}
Let $x\in Y$ and $n\in \mathbb{Z}$ with $I(x,n) \neq \emptyset$. Set  
\[ I(x,n) = [\alpha, \beta], \quad  r = \left \lceil \frac{\alpha-n}{N}\right\rceil, \quad s = \left \lfloor \frac{\beta-n}{N}\right\rfloor. \]
See Figure \ref{fig: r and s}.
If $s-r > n_0$ then 
\[  \#\{m|\, w_{nm}(x)>0\} \leq \max\{k|\, n_k < s-r\}.\]
\end{lemma}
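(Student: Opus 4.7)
The proof is a direct arithmetic check built on Lemma~\ref{lemma: construction of weight} and the definitions of the constants in Section~\ref{subsection: setting}. The plan is as follows.

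The starting point will be the count bound from Lemma~\ref{lemma: construction of weight}(3),
\[ \#\{m \mid w_{nm}(x) > 0\} \leq 1 + \frac{(|I(x,n)| - L_1)^+}{C}, \]
and I would dispose of the trivial case $|I(x,n)| \leq L_1$ using Lemma~\ref{lemma: construction of weight}(2) (which gives count zero), together with the hypothesis $s - r > n_0$ (which forces $\max\{k \mid n_k < s - r\} \geq 0$).

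In the remaining case $|I(x,n)| > L_1$, the elementary estimate $|I(x,n)| < (s - r + 2) N$ follows immediately from the definitions $r = \lceil (\alpha - n)/N \rceil$ and $s = \lfloor (\beta - n)/N \rfloor$, since these give $\alpha > n + (r-1)N$ and $\beta < n + (s+1)N$. Substituting this into the count bound and using the values $L_1 = (C_1 + C_2 + 2) N$ and $C = C_1 N$ converts everything into a statement about the integer $s - r$, namely that the count is strictly less than $(s - r - C_2)/C_1$.

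The last ingredient is the growth bound $n_{k^*+1} < C_1(k^*+1) + C_2$ coming from the choice of the sequence in Section~\ref{subsection: setting}, combined with the defining inequality $n_{k^*+1} \geq s - r$ for $k^* := \max\{k \mid n_k < s - r\}$. These two together force $s - r \leq C_1(k^*+1) + C_2 - 1$ by integrality, hence $(s - r - C_2)/C_1 < k^* + 1$; since the count on the left is an integer, it must therefore be at most $k^*$, which completes the proof. I do not foresee any substantive obstacle here: the constants $C_1, C_2, N, L_1, C$ were selected in Section~\ref{subsection: setting} precisely so that these inequalities line up, and every step is a one-line computation.
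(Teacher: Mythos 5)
Your proof is correct and takes essentially the same route as the paper's: dispose of the case $|I(x,n)|\leq L_1$ via Lemma~\ref{lemma: construction of weight}(2), bound $|I(x,n)|<(s-r+2)N$, plug into Lemma~\ref{lemma: construction of weight}(3) with $C=C_1N$ and $L_1=(C_1+C_2+2)N$, and finish with the growth bound $n_k<C_1k+C_2$. The only cosmetic difference is that the paper phrases the last step with $\lfloor(s-r-C_2)/C_1\rfloor=\max\{k\mid C_1k+C_2\leq s-r\}$ while you argue by integrality against $k^*+1$; these are the same computation.
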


\begin{proof}
  If $|I(x,n)|\leq L_1$ then $w_n=(0,\dots,0)$ by Lemma \ref{lemma: construction of weight} (2).
So we assume $|I(x,n)|>L_1$.
We have $|I(x,n)| < (s-r)N + 2N$.  By Lemma \ref{lemma: construction of weight} (3), the number of $m$ with $w_{nm}(x)>0$ is 
bounded by 
\[ \left\lfloor 1+ \frac{1}{C}(|I(x,n)| - L_1)\right \rfloor \leq
  \left \lfloor 1+ \frac{(s-r)N+2N-L_1}{C_1 N}\right\rfloor = \left\lfloor \frac{s-r-C_2}{C_1}\right\rfloor.\]
Here we have used $C=C_1 N$ and $L_1 = C_1 N+C_2 N+2N$.
From $n_k < C_1 k+C_2$
\[  \left\lfloor \frac{s-r-C_2}{C_1}\right\rfloor = \max\{k|\, C_1 k+C_2\leq s-r\} \leq 
    \max\{k|\, n_k < s-r\}.\]
\end{proof}

\begin{figure}
    \centering
    \includegraphics[,bb= 0 0 512 160]{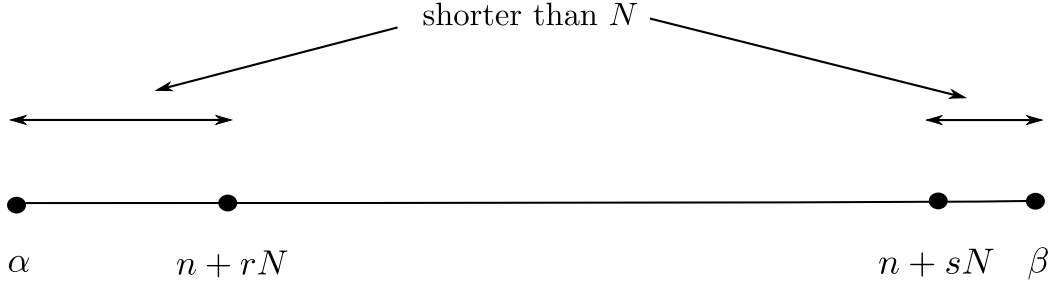}
    \caption{$n+rN$ and $n+sN$ on $I(x,n)$.} \label{fig: r and s}
\end{figure}

We set $W = (CQ)^{R+1} = (CQ)^{\{0,1,2,\dots,R\}}$.
For $0\leq k\leq R$ we define $W_k\subset W$ as the set of $(x_n)_{n=0}^R$ satisfying 
$x_n= *$ except for at most $k$ entries.
Hence $\{(*,\dots,*)\} =W_0 \subset W_1\subset W_2 \subset \dots \subset W_{R+1} = W$.
We have $\dim W_k = k \dim CQ$. 

Consider the disjoint union $P\sqcup CQ$ and take a distance $D$ on it.
We consider $Q = \{1\}\times Q$ as a subspace of $CQ$. So $D$ also gives a distance on $Q$.
There exists $\varepsilon'>0$ such that for $x,y\in X$
\begin{equation} \label{eq: choice of varepsilon'}
   D(\pi(x), \pi(y)) < \varepsilon' \Longrightarrow d(x,y)<\varepsilon,   \quad 
   D(\Pi(x),\Pi(y)) < \varepsilon'  \Longrightarrow d_{N}(x,y) < \varepsilon.
\end{equation}
Let $0\leq m_1 \leq m_2 < n$.
We define a semi-distance $D|_{m_1}^{m_2}$ on $P^n\times W$ by 
\[ D|_{m_1}^{m_2}((x,y), (x',y')) = \max\left(D(x_{m_1}, x'_{m_1}),\dots,D(x_{m_2},x'_{m_2}), D(y_0,y'_0),\dots,D(y_R,y'_R)\right),\]
where 
\[ x= (x_0,\dots,x_{n-1}), x'= (x'_0,\dots,x'_{n-1})\in P^n, \quad 
   y= (y_0,\dots,y_R), y'=(y'_0,\dots,y'_R)\in W.\]
The dependence of $D|_{m_1}^{m_2}$ on $n$ is not explicitly written in this notation.
But we believe that it does not cause a confusion.

\subsection{Successive perturbations}  \label{subsection: successive perturbations}

For a finite set $A$ we define $\mathbb{C}[A]$ as the vector space of all maps from $A$ to $\mathbb{C}$.
This is isomorphic to $\mathbb{C}^{\#A}$.
The following lemma is based on Lemma \ref{lemma: parametric genericity of embedding}.

\begin{lemma}  \label{lemma: successive perturbation}
For all integers $n\geq 1$ and $-M\leq r\leq M$ we can construct simplicial maps 
\[  F_{n,r}:P^n\times W\to \mathbb{C}\left[\left(\frac{1}{\rho}\mathbb{Z}\right)\cap [0,nN)\right]\]
satisfying the following.

\noindent 
(1) For all $x\in X$, $y\in W$ and $t\in (1/\rho)\mathbb{Z}\cap [0,nN)$
\[  |F_{n,r}(\Pi_n(x),y)(t) - f(x)(t)| < \delta'.\]

\noindent 
(2) Let $0\leq k\leq R$ and $n'$ be integers with $n_k\leq n'\leq n$.
For any $-M\leq r<M$ and $0\leq c\leq 1$ the following map is an $\varepsilon'$-embedding with respect to $D|_1^{n'-1}$:
\begin{equation*}
   \begin{split}
   P^{n+1}\times W_k \to  & \mathbb{C}\left[\left(\frac{1}{\rho}\mathbb{Z}\right) \cap [N, n' N)\right] \\
   ((x_0,\dots,x_n),y )\mapsto &  (1-c)F_{n,r}(x_0,\dots,x_{n-1},y)|_{(1/\rho)\mathbb{Z} \cap [N, n' N)} \\
    & + c F_{n,r+1}(x_1,\dots,x_n, y)|_{(1/\rho)\mathbb{Z} \cap [0,(n'-1)N)}.
   \end{split}
\end{equation*}   
The right-hand side is the function whose value of $t\in (1/\rho)\mathbb{Z}\cap [N, n' N)$ is 
\[ (1-c)F_{n,r}(x_0, \dots, x_{n-1},y)(t) + c F_{n,r+1}(x_1,\dots, x_n,y)(t-N).\]
Note that the variables of $F_{n,r+1}$ are $x_1,\dots,x_n,y$ (not $x_0,\dots,x_{n-1},y$).
\end{lemma}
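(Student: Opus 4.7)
The plan is to construct the simplicial maps $F_{n,r}$ for each fixed $n\geq 1$ by induction on $r$ from $r=-M$ to $r=M$, combining Lemma~\ref{lemma: approximation by linear map} (to secure condition~(1)) with Lemma~\ref{lemma: parametric genericity of embedding} (to secure condition~(2)). The crucial preparatory step is to arrange that each $F_{n,r}$ has a \emph{block-decomposable} form: for $t\in(1/\rho)\mathbb{Z}\cap[mN,(m+1)N)$ with $0\leq m\leq n-1$, the value $F_{n,r}(x_0,\ldots,x_{n-1},y)(t)$ depends on the source only through the pair $(x_m,y)$. Since $f(x)(mN+s)=f(T^{mN}x)(s)$ and $\Pi\colon X\to P$ is an $\varepsilon$-embedding with respect to $d_N$, Lemma~\ref{lemma: approximation by linear map} applied block by block (together with (\ref{eq: choice of varepsilon})) produces simplicial maps $g_s\colon P\to\mathbb{C}$ with $|g_s(\Pi(x'))-f(x')(s)|<\delta'$, from which a block-decomposable base map $F_{n,-M}$ satisfying (1) is immediate; the $y$-dependence is added as a small perturbation within the $\delta'$-tolerance.

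Once the decomposable form is enforced, the combined map $\Psi_c$ of condition~(2), restricted to a block $t\in[mN,(m+1)N)$ with $1\leq m\leq n'-1$, satisfies
\[
 \Psi_c(x_0,\ldots,x_n,y)(t)=(1-c)h^{(r)}_{m,t}(x_m,y)+c\,h^{(r+1)}_{m-1,t-N}(x_m,y),
\]
which depends on the source only through $(x_m,y)$. Aggregating over $t\in[N,n'N)$, the map $\Psi_c$ depends only on $(x_1,\ldots,x_{n'-1},y)$, hence factors as $\Psi_c=\tilde\Psi_c\circ\pi$ through the natural projection
\[
 \pi\colon P^{n+1}\times W_k\to P^{n'-1}\times W_k,\quad (x_0,\ldots,x_n,y)\mapsto(x_1,\ldots,x_{n'-1},y),
\]
which is an isometric quotient for $D|_1^{n'-1}$. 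So it suffices to make $\tilde\Psi_c\colon P^{n'-1}\times W_k\to\mathbb{C}[(1/\rho)\mathbb{Z}\cap[N,n'N)]$ a genuine embedding. The source has dimension $(n'-1)\dim P+k\dim CQ$ and the target has real dimension $2\rho(n'-1)N$; by (\ref{eq: conditions of n_k}) applied with parameter $n'$,
\[
 2\rho(n'-1)N\geq 2n'\dim P+2k\dim CQ+2 \geq 2\bigl((n'-1)\dim P+k\dim CQ\bigr)+2,
\]
so the dimensional hypothesis of Lemma~\ref{lemma: parametric genericity of embedding} is met.

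For the inductive step I fix $F_{n,r}$ and let $\mathcal{S}$ be the space of block-decomposable simplicial maps $F\colon P^n\times W\to\mathbb{C}[(1/\rho)\mathbb{Z}\cap[0,nN)]$ satisfying (1). This set is nonempty, convex, and open. Applying Lemma~\ref{lemma: parametric genericity of embedding} with $g$ given by the (fixed) $F_{n,r}$-contribution to $\tilde\Psi_c$ and $f$ by the (variable) $F_{n,r+1}$-contribution, we see that for $F_{n,r+1}$ in a dense open subset of $\mathcal{S}$ the map $\tilde\Psi_c$ is an embedding for all $c\in[0,1)$. The endpoint $c=1$ is secured simultaneously by imposing via Corollary~\ref{cor: embeddings are dense} that $\operatorname{res}_0 F_{n,r+1}$ alone induces an embedding of $P^{n'-1}\times W_k$; the endpoint $c=0$ is already guaranteed by the corresponding inductive hypothesis on $F_{n,r}$. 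Intersecting finitely many such dense open conditions over all valid pairs $(k,n')$ with $n_k\leq n'\leq n$, the Baire category theorem leaves a nonempty set, and any $F_{n,r+1}$ from this intersection closes the induction.

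The main obstacle is that condition (\ref{eq: conditions of n_k}) is essentially tight: Lemma~\ref{lemma: parametric genericity of embedding} cannot be applied on the full source $P^{n+1}\times W_k$ (which has dimension $(n+1)\dim P+k\dim CQ$, too large to fit into the target), so one \emph{must} pass to the quotient $P^{n'-1}\times W_k$, and this forces the block-decomposable structure on each $F_{n,r}$. A secondary tension is that condition (1) imposes no constraint on the $y$-dependence, yet the $W_k$-direction must be separated in the final embedding; this is resolved by exploiting the genericity within $\mathcal{S}$ to generate nontrivial $y$-dependence while staying $\delta'$-close to $f$. Harmonizing these requirements, together with threading the induction so both boundary values $c=0$ and $c=1$ inherit the correct embedding hypothesis, is where the bulk of the technical work lies.
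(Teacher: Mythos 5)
Your overall architecture (Lemma \ref{lemma: approximation by linear map} for condition (1), Lemma \ref{lemma: parametric genericity of embedding} plus Corollary \ref{cor: embeddings are dense} for condition (2), separate treatment of the endpoints $c=0,1$) matches the paper, but the load-bearing device you introduce --- forcing each $F_{n,r}$ to be \emph{block-decomposable}, so that the value on $[mN,(m+1)N)$ depends only on $(x_m,y)$ --- creates a genuine gap. Lemma \ref{lemma: parametric genericity of embedding} and Corollary \ref{cor: embeddings are dense} assert density of embeddings in the \emph{full} space $\mathrm{Hom}(\mathbb{R}^{V(P^{n'-1}\times W_k)},\mathbb{R}^D)$; your maps $\tilde\Psi_c$ are confined to the proper linear subspace of block-decomposable maps, and genericity in the ambient space says nothing about genericity in a subspace. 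Worse, within that subspace the required injectivity is in general unattainable generically: if two source points differ only in the coordinate $x_m$ (same $y$), all other blocks automatically agree, so injectivity of the aggregate forces every slice $h_m(\cdot,y)\colon P\to\mathbb{C}^{\rho N}$ to be injective for \emph{every} $y\in W_k$. That is a $k\dim CQ$-parameter family of maps into a space of real dimension $2\rho N$, required to avoid the non-embedding locus of codimension $2\rho N-2\dim P$; since (\ref{eq: conditions of n_k}) allows $k\dim CQ$ to be of order $(n'-1)(\rho N-\dim P)$, far exceeding $2\rho N-2\dim P$, a general-position family must meet the bad locus. Your dimension count $2(n'-1)\rho N\geq 2\dim(P^{n'-1}\times W_k)+2$ is correct but irrelevant once the map is forced to be decomposable: decomposability prevents the $2(n'-1)\rho N$ target coordinates from acting jointly on the source.

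The paper avoids this by using a different structural constraint: induction on $n$, with $F_{n,r}$ a small generic perturbation of the concatenation $(F_{n-1,r},F)$. This \emph{prefix} structure (rather than blockwise structure) means condition (2) for $n'\leq n-1$ is inherited from level $n-1$ because the conditions are open and stable under small perturbations, while the new case $n'=n$ is handled by making the full map $P^n\times W_k\to\mathbb{C}\left[(1/\rho)\mathbb{Z}\cap[N,nN)\right]$ a genuine embedding --- here the map on $[N,nN)$ depends on all of $(x_0,\dots,x_{n-1},y)$ jointly, so all $2(n-1)\rho N$ real target dimensions are available and (\ref{eq: conditions of n_k}) suffices. (A further wrinkle you would also need: within level $n$ the condition (2) couples $F_{n,r}$ with the not-yet-constructed $F_{n,r+1}$, which the paper resolves via the surrogate condition (3) involving $F_{n-1,r+1}$ and another stability argument.) As written, your factorization through $P^{n'-1}\times W_k$ is the only mechanism you offer for $n'<n$, so the argument cannot be repaired simply by dropping decomposability; it needs the paper's nested construction or an equivalent.
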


\begin{proof}
First note that the above two conditions (1) and (2) are stable under sufficiently small perturbations of $F_{n,r}$. 
The maps $F_{n,r}$ will be constructed by successive perturbations.
Once the maps satisfy the conditions, their small perturbations also satisfy them.

By Lemma \ref{lemma: approximation by linear map} and
 the choice of $\varepsilon$ in (\ref{eq: choice of varepsilon}), there exists a simplicial map 
\[ F:P\to \mathbb{C}\left[\left(\frac{1}{\rho} \mathbb{Z}\right)  \cap [0,N)\right] \]
 satisfying 
$|F(\Pi(x))(t)-f(x)(t)| < \delta'$ for all $x\in X$ and $t\in (1/\rho)\mathbb{Z}\cap [0,N)$.
For $n<n_0$ we set $F_{n,r}(x,y) = (F(x_0), \dots, F(x_{n-1}))$ for $x=(x_0,\dots,x_{n-1}) \in P^n$ and $y\in W$.
This notation means that 
\[ F_{n,r}(x,y)(t) = F(x_i)(t-iN), \quad (0\leq i < n,\,  t\in (1/\rho)\mathbb{Z}\cap [iN, (i+1)N)).  \]
We will use similar notations below.
These $F_{n,r}$ satisfy the required conditions since the condition (2) is empty for $n<n_0$.
So we assume $n\geq n_0$ and that we have constructed $F_{n-1, r}$ for all $-M\leq r\leq M$.
We try to construct $F_{n,r}$.

Consider 
\begin{equation*}
   \begin{split}
    (F_{n-1,r},F):P^n\times W  &\to \mathbb{C}\left[\left(\frac{1}{\rho}\mathbb{Z}\right)\cap [0,nN)\right],  \\
    (x_0,\dots,x_{n-1},y)  &\mapsto (F_{n-1,r}(x_0,\dots,x_{n-2},y), F(x_{n-1})).
   \end{split}
\end{equation*}   
These satisfy the condition (1) and also the condition (2) for $n_k\leq n'\leq n-1$.
So we will construct $F_{n,r}$ by slightly perturbing $(F_{n-1,r},F)$.
Consider the following condition:

(3) Take integers $-M\leq r<M$, $0\leq k\leq R$ with $n_k\leq n$ and a real number $0\leq c\leq 1$.
The following map is an $\varepsilon'$-embedding with respect to $D|_1^{n-1}$.
\begin{equation*}
  \begin{split}
  P^n \times W_k \to & \mathbb{C}\left[\left(\frac{1}{\rho}\mathbb{Z}\right) \cap [N, n N)\right] \\
  ((x_0, \dots,x_{n-1}), y) \mapsto & (1-c)F_{n,r}(x_0,\dots,x_{n-1}, y)|_{(1/\rho)\mathbb{Z}\cap [N,n N)} \\
   & + c F_{n-1,r+1}(x_1,\dots,x_{n-1},y)|_{(1/\rho)\mathbb{Z}\cap [0,(n-1)N)}.
  \end{split}
\end{equation*}
The main difference between the conditions (2) and (3) is that $F_{n,r+1}(x_1,\dots,x_n,y)$ in (2) is replaced with 
$F_{n-1,r+1}(x_1,\dots,x_{n-1},y)$ in (3).

Note that the real dimension of $\mathbb{C}[(1/\rho)\mathbb{Z}\cap [N,nN)]$ is $2(n-1)\rho N$.
By Corollary \ref{cor: embeddings are dense}
and the choice of $n_k$ in (\ref{eq: conditions of n_k}), we can assume that the condition (3) is satisfied for $c=1$
after replacing the maps $F_{n-1,r+1}$ by small perturbations (if necessary).

By using Lemma \ref{lemma: parametric genericity of embedding} and (\ref{eq: conditions of n_k}), 
we can construct $F_{n,-M}$ as a small perturbation of 
$(F_{n-1,-M}, F)$ so that it satisfies the condition (3) for $r=-M$.
Then, if $F_{n,-M+1}$ is a sufficiently small perturbation of $(F_{n-1,-M+1}, F)$, the condition (2) is satisfied for $r =-M$.
Moreover we can assume that it satisfies the condition (3) for $r=-M+1$
by the same reason.
By continuing this process, we can construct $F_{n,r}$ inductively (with respect to $r$) so that they satisfy the required properties.
\end{proof}

For $-M\leq r\leq M$ we set 
\[  G_r = F_{M,r}:P^M\times W \to \mathbb{C}\left[\left(\frac{1}{\rho}\mathbb{Z}\right)\cap [0, M N)\right]. \]
Indeed any $F_{n,r}$ will do the same work if $n$ is sufficiently large.
We use the choice $F_{M,r}$ because $|I(x,n)|<M$ by Lemma \ref{lemma: properties of tiling} (1).

\subsection{Construction of the map $g$} \label{subsection: construction of the map g}

Take $x\in X$. We will define $g(x)\in B_1(V[a,b])$.
We define $E(x)$ as the set of integers $n$ with $I(\Phi(x),n) \neq \emptyset$.
Take $n\in E(x)$. We set 
\[ I(\Phi(x),n) = [\alpha_{x,n}, \beta_{x,n}], \quad r_{x,n} = \left\lceil \frac{\alpha_{x,n}-n}{N}\right\rceil, \quad 
   s_{x,n} = \left\lfloor \frac{\beta_{x,n}-n}{N}\right\rfloor.  \]
We define $0\leq c_{x,n}, c'_{x,n}< 1$ by 
\[ c_{x,n} = \frac{n+r_{x,n}N-\alpha_{x,n}}{N}, \quad c'_{x,n} = \frac{\beta_{x,n}-n-s_{x,n}N}{N}.\]
See Figure \ref{fig: I(Phi(x),n) and several points on it}.

\begin{figure}
    \centering
    \includegraphics[,bb= 0 0 530 135]{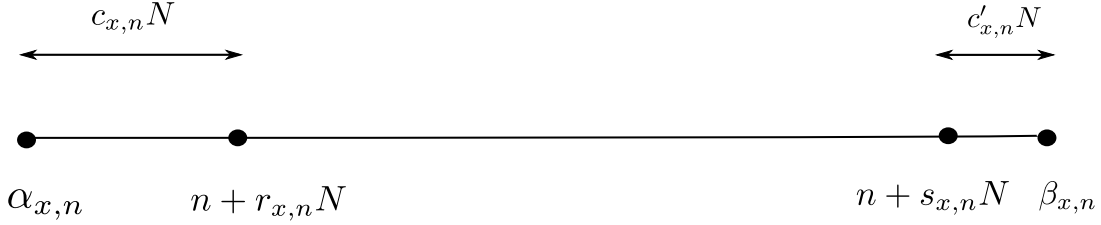}
    \caption{$I(\Phi(x),n)$ and several points on it.} \label{fig: I(Phi(x),n) and several points on it}
\end{figure}

Let $\boldsymbol{\delta}_0$ and $\boldsymbol{\delta}_1$ be the delta measures on the two-points space $\{0,1\}$
concentrated at $0$ and $1$ respectively.
We define a probability measure on $\{0,1\}\times \{0,1\}$ by 
\[ \mu_{x,n} = \left(c_{x,n} \boldsymbol{\delta}_0 + (1-c_{x,n})\boldsymbol{\delta}_1\right) 
    \times \left(c'_{x,n}\boldsymbol{\delta}_0 + (1-c'_{x,n})\boldsymbol{\delta}_1\right).   \]
We define a probability measure on $\prod_{n\in E(x)} \{0,1\}^2$ by 
\[ \mu_x = \prod_{n\in E(x)} \mu_{x,n}. \]

Take 
\[ \boldsymbol{\theta} \in \prod_{n\in E(x)} \{0,1\}^2, \quad \boldsymbol{\theta} = ((\theta_n, \theta'_n))_{n\in E(x)}, \quad 
  \theta_n\in \{0,1\}, \> \theta'_n\in \{0,1\}.\]
For $n\in E(x)$ we define 
\[ \Lambda(x, \boldsymbol{\theta}, n) = n + \left( \left(\frac{1}{\rho}\mathbb{Z}\right) \cap [(r_{x,n}+\theta_n)N, (s_{x,n}-\theta'_n)N)\right) 
    \subset I(\Phi(x),n).  \]
When $r_{x,n}+\theta_n \geq s_{x,n}-\theta'_n$, this is empty.
See Figure \ref{fig: Lambda(x,theta,n)}.
\begin{figure}
    \centering
    \includegraphics[,bb= 0 0 530 133]{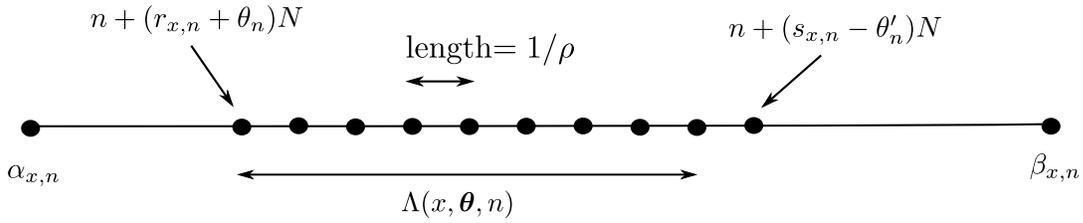}
    \caption{$\Lambda(x,\boldsymbol{\theta},n)$ on $I(\Phi(x),n)$.} \label{fig: Lambda(x,theta,n)}
\end{figure}
We define a subset of $\mathbb{R}$ by 
\[  \Lambda(x, \boldsymbol{\theta}) = \bigcup_{n\in E(x)} \Lambda(x, \boldsymbol{\theta}, n). \]
The distance between any two distinct points of $\Lambda(x, \boldsymbol{\theta})$ is $\geq 1/\rho$.
So for any $\lambda\in \Lambda(x, \boldsymbol{\theta})$ 
the set $-\lambda+ \Lambda(x, \boldsymbol{\theta})$ satisfies Conditions \ref{condition: interpolation} (1) and (2).
Let $\varphi_{-\lambda+\Lambda(x, \boldsymbol{\theta})} \in V[-(\rho+\tau)/2,(\rho+\tau)/2]$ 
be the interpolation function introduced in (\ref{eq: rapidly decreasing interpolation}).
We define $\varphi_{x, \boldsymbol{\theta}, \lambda}$ by 
\[ \varphi_{x, \boldsymbol{\theta}, \lambda}(t) 
 = \exp\left(2\pi \sqrt{-1}\frac{a+b}{2} (t-\lambda) \right) \varphi_{-\lambda+\Lambda(x, \boldsymbol{\theta})}(t-\lambda).\]
This satisfies 
\begin{itemize}
  \item $\varphi_{x, \boldsymbol{\theta}, \lambda}\in V[a,b]$ because $\rho+\tau<b-a$.
  \item $\varphi_{x, \boldsymbol{\theta},\lambda}(\lambda)=1$ and $\varphi_{x,\boldsymbol{\theta},\lambda}(\lambda')=0$
   for all $\lambda'\in \Lambda(x, \boldsymbol{\theta})\setminus \{\lambda\}$.
   \item $\varphi_{x, \boldsymbol{\theta},\lambda}$ is rapidly decreasing and 
   \begin{equation}  \label{eq: damping estimate}
      |\varphi_{x, \boldsymbol{\theta},\lambda}(t)| \leq \frac{K}{1+|t-\lambda|^2}.  
   \end{equation}
\end{itemize}

Let $w(\Phi(x)) = (w_n)_{n\in \mathbb{Z}}$, $w_n = (w_{n0},\dots, w_{nR}) \in [0,1]^{R+1}$, be the weight introduced in 
Lemma \ref{lemma: construction of weight}.
Let $n\in E(x)$. We set 
\[  y_{x,n} = (w_{n0} \pi(T^n x), w_{n1}\pi(T^{n+1}x), \dots,w_{nR}\pi(T^{n+R}x))\in W = (CQ)^{R+1}. \]
For $\lambda\in \Lambda(x, \boldsymbol{\theta}, n)$ we set 
\[ u(x, \boldsymbol{\theta},n,\lambda) 
   = G_{r_{x,n}+\theta_n}\left(\Pi_M(T^{n+(r_{x,n}+\theta_n)N}x),y_{x,n}\right)\left(\lambda-n-(r_{x,n}+\theta_n)N\right)
   - f(x)(\lambda).\]
Note 
\[  f(x)(\lambda) = f(T^{n+(r_{x,n}+\theta_n)N}x)\left(\lambda - n-(r_{x,n}+\theta_n)N\right).  \]
Hence by Lemma \ref{lemma: successive perturbation} (1)
\begin{equation}  \label{eq: perturbation term is small}
    |u(x, \boldsymbol{\theta},n,\lambda)| <\delta'. 
\end{equation}

We define a function $g(x, \boldsymbol{\theta})$ in $V[a,b]$ by 
\[ g(x, \boldsymbol{\theta})(t) = f(x)(t) + \sum_{n\in E(x)} \sum_{\lambda\in \Lambda(x, \boldsymbol{\theta},n)} 
   u(x, \boldsymbol{\theta}, n,\lambda) \varphi_{x, \boldsymbol{\theta}, \lambda}(t).\]
From (\ref{eq: choice of delta'}), (\ref{eq: damping estimate}) and (\ref{eq: perturbation term is small})
\[  |g(x, \boldsymbol{\theta})(t)-f(x)(t)| < \delta.\]
Finally we define $g(x)\in V[a,b]$ by 
\[ g(x) = \int_{\prod_{n\in E(x)} \{0,1\}^2} g(x, \boldsymbol{\theta})\,  d\mu_x(\boldsymbol{\theta}). \]
This satisfies $|g(x)(t)-f(x)(t)|<\delta$.
Since $|f(x)(t)|\leq 1-\delta$, we have $g(x)\in B_1(V[a,b])$.

For every $n\in E(x)$ with $r_{x,n}+1 < s_{x,n}-1$ 
\begin{equation}  \label{eq: formula for g}
   \begin{split}
    g(x)&|_{n+ \left((1/\rho)\mathbb{Z}\cap [(r_{x,n}+1)N, (s_{x,n}-1)N)\right)} \\
    = & \,  c_{x,n} G_{r_{x,n}}\left(\Pi_M(T^{n+r_{x,n}N}x),  y_{x,n}\right)|_{(1/\rho)\mathbb{Z}\cap [N,(s_{x,n}-r_{x,n}-1)N)} \\
   &  + (1-c_{x,n}) G_{r_{x,n}+1}\left(\Pi_M(T^{n+(r_{x,n}+1)N}x),y_{x,n}\right)|_{(1/\rho)\mathbb{Z}\cap [0,(s_{x,n}-r_{x,n}-2)N)}.
   \end{split}
\end{equation}

\begin{lemma}  
The map 
\[ X\ni x\mapsto g(x)\in B_1(V[a,b]) \]
is equivariant and continuous.
\end{lemma}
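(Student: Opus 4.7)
The plan has two parts: equivariance follows by bookkeeping from the shift identity $I(S\Phi(x),n) = -1 + I(\Phi(x),n+1)$, while continuity requires a dominated convergence argument on the sample space $\prod_{n\in E(x)}\{0,1\}^2$, where the crux is that the measure $\mu_x$ is tailored to smooth out the jumps of the integer parameters $r_{x,n}, s_{x,n}$.

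For equivariance, propagating the shift identity gives $E(Tx) = E(x) - 1$, $\alpha_{Tx,n} = \alpha_{x,n+1} - 1$, $\beta_{Tx,n} = \beta_{x,n+1} - 1$, and hence $r_{Tx,n} = r_{x,n+1}$, $s_{Tx,n} = s_{x,n+1}$, $c_{Tx,n} = c_{x,n+1}$, $c'_{Tx,n} = c'_{x,n+1}$. The weights shift by Lemma~\ref{lemma: construction of weight}(1), so $y_{Tx,n} = y_{x,n+1}$. Under the index-shifting bijection $\boldsymbol{\theta} \mapsto \boldsymbol{\theta}'$ on $\prod\{0,1\}^2$ induced by $n \mapsto n+1$, one has $\Lambda(Tx,\boldsymbol{\theta}) = -1 + \Lambda(x,\boldsymbol{\theta}')$, hence $\varphi_{Tx,\boldsymbol{\theta},\lambda}(t) = \varphi_{x,\boldsymbol{\theta}',\lambda+1}(t+1)$ and $u(Tx,\boldsymbol{\theta},n,\lambda) = u(x,\boldsymbol{\theta}',n+1,\lambda+1)$. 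Since the bijection pushes $\mu_{Tx}$ forward to $\mu_x$, re-indexing the sum and the integral yields $g(Tx)(t) = g(x)(t+1)$.

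For continuity, fix $x_0 \in X$ and a compact $K \subset \mathbb{R}$; the goal is $\|g(x)-g(x_0)\|_{L^\infty(K)} \to 0$ as $x \to x_0$. A first reduction uses Lemma~\ref{lemma: properties of tiling}(1) to place $I(\Phi(x),n) \subset (n-M/2, n+M/2)$: combined with $|\lambda-\lambda'|\geq 1/\rho$ and the decay estimate (\ref{eq: damping estimate}), the contribution to $g(x)(t)$ from $n$ outside a bounded window around $K$ is uniformly small, so only finitely many $n$ need be tracked. Within that window, $E(x)$ is locally constant at $x_0$ because the intervals carrying sample points have nonempty interior and persist under small perturbations. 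The main obstacle is that $r_{x,n}$ and $s_{x,n}$, being defined by ceiling and floor, jump whenever $(\alpha_{x,n}-n)/N$ or $(\beta_{x,n}-n)/N$ crosses an integer, and such a jump alters the sample set $\Lambda(x,\boldsymbol{\theta})$ and hence both the kernels $\varphi_{x,\boldsymbol{\theta},\lambda}$ and the amplitudes $u(x,\boldsymbol{\theta},n,\lambda)$.

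This jump is precisely what $\mu_x$ is designed to absorb: the factor $c_{x,n}\boldsymbol{\delta}_0 + (1-c_{x,n})\boldsymbol{\delta}_1$ transfers mass continuously between the two possible values of $r_{x,n}+\theta_n$ as $c_{x,n}$ passes through $0$ or $1$, and similarly for the right endpoint via $c'_{x,n}$. Concretely one checks: (i) the maps $x \mapsto (c_{x,n}, c'_{x,n})$ are continuous, so $\mu_x$ depends continuously on $x$ in the weak-$*$ topology on the finite-dimensional factor that matters for $K$; (ii) off a $\mu_{x_0}$-null set, namely outside the configurations where some $\theta$-coordinate selects the wrong side of a jumping index, the integrand $g(x,\boldsymbol{\theta})(t)$ converges to $g(x_0,\boldsymbol{\theta})(t)$ uniformly in $t \in K$, using Lemma~\ref{lemma: continuity of rapidly decreasing interpolation} for the kernels and the continuity of $\Pi_M$, $\pi$, $w_n$, $G_r$ for the amplitudes; (iii) the uniform bound $|g(x,\boldsymbol{\theta})(t)|\leq 1$ is a dominating function. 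Dominated convergence then yields $g(x)(t) \to g(x_0)(t)$, and uniformity in $t \in K$ follows from the first reduction.
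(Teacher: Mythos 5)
Your equivariance argument coincides with the paper's and is fine. The continuity part, however, has a genuine gap exactly at the point the measure $\mu_x$ was introduced to handle. Both your steps (i) and (ii) fail at a jump point. Suppose $c_{x_0,n}=0$, i.e. $\alpha_{x_0,n}=n+r_{x_0,n}N$, so that $\mu_{x_0}$ puts full mass on $\theta_n=1$, and let $x\to x_0$ from the side on which $\alpha_{x,n}>\alpha_{x_0,n}$. Then $r_{x,n}=r_{x_0,n}+1$ and $c_{x,n}\to 1$, so $\mu_x$ assigns mass $1-c_{x,n}\to 0$ to $\theta_n=1$: the measures $\mu_x$ do \emph{not} converge weakly to $\mu_{x_0}$ on the discrete configuration space, so (i) is false. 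Moreover, for every fixed $\boldsymbol{\theta}$ the effective left index satisfies $r_{x,n}+\theta_n=r_{x_0,n}+\theta_n+1$, so $\Lambda(x,\boldsymbol{\theta},n)$ gains or loses a whole block of $\rho N$ sample points and the perturbation uses a different map $G_{r+\theta}$ and a different base point $T^{n+(r+\theta)N}x$; hence $g(x,\boldsymbol{\theta})\not\to g(x_0,\boldsymbol{\theta})$ on a set of \emph{full} $\mu_{x_0}$-measure, so (ii) is false too. A scheme of the form ``a.e. pointwise convergence plus domination'' cannot succeed here, because the two failures must be made to cancel each other rather than be discarded.

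The correct mechanism is the one your own heuristic sentence identifies: what varies continuously in $x$ is the law under $\mu_x$ of the \emph{selected indices} $r_{x,n}+\theta_n$ and $s_{x,n}-\theta'_n$, not the law of $\boldsymbol{\theta}$. To exploit this you must couple the configuration spaces across the jump. The paper does so by defining, for $x'$ near $x$, a re-indexing $\boldsymbol{\Theta}(x',\boldsymbol{\theta})$ with $r_{x',n}+\Theta(x',\boldsymbol{\theta})_n=r_{x,n}+1$ at the positions where $c_{x,n}=0$ (and the analogous condition at the right endpoints), noting that $\mu_{x'}$ is almost entirely concentrated where $\boldsymbol{\theta}$ agrees with $\boldsymbol{\Theta}(x',\boldsymbol{\theta})$ at those positions, and then comparing $\int g(x',\boldsymbol{\Theta}(x',\boldsymbol{\theta}))\,d\mu_{x'}$ with $\int g(x,\boldsymbol{\Theta}(x,\boldsymbol{\theta}))\,d\mu_{x}$ via Lemma \ref{lemma: continuity of rapidly decreasing interpolation} and the damping estimate (\ref{eq: damping of rapidly decreasing interpolation}). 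Your tail reduction and your treatment of degenerate intervals are acceptable, but without this matching step the core of the continuity argument is missing.
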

\begin{proof}
The check of the equivariance is direct.
We have $I(\Phi(Tx),n) = -1+I(\Phi(x),n+1)$.
Hence
$E(Tx) = -1+E(x)$ and for $n\in E(Tx)$
\[ r_{Tx,n} = r_{x,n+1}, \quad s_{Tx,n} = s_{x,n+1}, \quad c_{Tx,n} = c_{x,n+1},\quad c'_{Tx,n} = c'_{x,n+1}.\]
 We have a one to one correspondence between $\prod_{n\in E(x)}\{0,1\}^2$ and 
$\prod_{n\in E(Tx)} \{0,1\}^2$ by 
\[ \boldsymbol{\theta} \longleftrightarrow \tilde{\boldsymbol{\theta}}, \quad (\tilde{\theta}_n,\tilde{\theta}'_n) = (\theta_{n+1}, \theta'_{n+1}).\]
Under this identification, we have $\mu_{Tx} = \mu_x$.
We can check the following.
\[ \Lambda(Tx,\tilde{\boldsymbol{\theta}},n) = -1+ \Lambda(x, \boldsymbol{\theta}, n+1), 
    \quad \Lambda(Tx,\tilde{\boldsymbol{\theta}}) = -1+\Lambda(x, \boldsymbol{\theta}),\quad 
    \varphi_{Tx,\tilde{\boldsymbol{\theta}},\lambda}(t) = \varphi_{x,\boldsymbol{\theta},\lambda+1}(t+1), \]
\[ y_{Tx,n} = y_{x,n+1} \quad \text{by Lemma \ref{lemma: construction of weight} (1)}, 
   \quad u(Tx,\tilde{\boldsymbol{\theta}},n,\lambda) = u(x, \boldsymbol{\theta}, n+1,\lambda+1).\]
Then 
\[ g(Tx,\tilde{\boldsymbol{\theta}})(t) = g(x, \boldsymbol{\theta})(t+1), \quad 
   g(Tx)(t) = g(x)(t+1).\]

The proof of the continuity is slightly involved. Let $x\in X$.
Discontinuity appears in the two places of the above construction.
\begin{itemize}
   \item If $I(\Phi(x),n)$ is one point, then it may become empty after $x$ moves slightly.
   \item The integers $r_{x,n}$ and $s_{x,n}$ may jump when $c_{x,n}=0$ or $c'_{x,n}=0$.
\end{itemize}
The first issue causes no problem because $\Lambda(x, \boldsymbol{\theta},n)$ is empty and does not contribute to the value of $g(x)$ 
if $|I(\Phi(x),n)|=0$.
The second issue is more serious and causes a problem that 
$g(x, \boldsymbol{\theta})$ does \textit{not} depend continuously on $x$.
We introduced the probability measure $\mu_{x,n}$ for dealing with this problem.
Let $\mathcal{C}$ and $\mathcal{C}'$ be the sets of integers $n\in E(x)$ satisfying 
$c_{x,n}=0$ and $c'_{x,n}=0$ respectively.
These are the positions where the difficulty occurs.

Let $A$ and $\eta$ be positive numbers. 
Suppose $x'\in X$ is sufficiently close to $x$.
We want to show $|g(x')(t)-g(x)(t)| < \eta$ for $|t|\leq A$.
Let $B>0$ be a sufficiently large number.
We can assume $E(x')\cap [-A-B,A+B] \subset E(x)\cap [-A-B,A+B]$ and that 
every integer $n$ in the difference of these two sets satisfies $|I(\Phi(x),n)|=0$.
This means that these two sets are essentially equal.

Take $\boldsymbol{\theta} \in \prod_{n\in E(x')} \{0,1\}^2$.
We define $\boldsymbol{\Theta}(x', \boldsymbol{\theta})\in \prod_{n\in E(x')}\{0,1\}^2$ as follows.
\begin{itemize}
  \item For $|n|>A+B$ we set $(\Theta(x', \boldsymbol{\theta})_n,\Theta(x', \boldsymbol{\theta})'_n) = (0,0)$.
  \item Let $|n|\leq A+B$. 
  If $n\not\in \mathcal{C}$ then $\Theta(x', \boldsymbol{\theta})_n= \theta_n$. 
  If $n \not \in \mathcal{C}'$ then $\Theta(x', \boldsymbol{\theta})'_n = \theta'_n$.
  \item For $n\in \mathcal{C}\cap [-A-B,A+B]$, we define $\Theta(x',\boldsymbol{\theta})_n\in \{0,1\}$ by 
  \[  r_{x',n}+\Theta(x', \boldsymbol{\theta})_n = r_{x,n} + 1.  \]
  \item For $n\in \mathcal{C}'\cap [-A-B,A+B]$ we define $\Theta(x', \boldsymbol{\theta})'_n \in \{0,1\}$ by 
  \[  s_{x',n} - \Theta(x', \boldsymbol{\theta})'_n = s_{x,n} - 1. \]
\end{itemize}
Note that for $n\in \mathcal{C}\cap [-A-B,A+B]$ the number $c_{x',n}$ is very close to $0$ or $1$, and that
the measure $c_{x',n}\boldsymbol{\delta}_0 + (1-c_{x',n}) \boldsymbol{\delta}_1$ 
is almost equal to the delta measure at $\Theta(x', \boldsymbol{\theta})_n$. 
Similarly for $n\in \mathcal{C}' \cap [-A-B,A+B]$.
Then we can assume that for $|t|\leq A$
\[  \left|\int_{\prod_{E(x')} \{0,1\}^2} g(x', \boldsymbol{\theta})(t) d\mu_{x'}(\boldsymbol{\theta}) 
     - \int_{\prod_{E(x')} \{0,1\}^2} g(x',\boldsymbol{\Theta}(x',\boldsymbol{\theta}))(t) d\mu_{x'}(\boldsymbol{\theta}) \right| < \eta/3. \]
Here we have also used Lemma \ref{lemma: continuity of rapidly decreasing interpolation}
(with the assumption $B\gg 1$) and (\ref{eq: damping estimate}).
Since the two sets $E(x)\cap [-A-B,A+B]$ and $E(x')\cap [-A-B,A+B]$ are essentially equal, applying 
 Lemma \ref{lemma: continuity of rapidly decreasing interpolation} and (\ref{eq: damping estimate}) again, we get
\[ \left| \int_{\prod_{E(x')} \{0,1\}^2} g(x', \boldsymbol{\Theta}(x', \boldsymbol{\theta}))(t) d\mu_{x'}(\boldsymbol{\theta}) 
    -  \int_{\prod_{E(x)} \{0,1\}^2} g(x,\boldsymbol{\Theta}(x,\boldsymbol{\theta}))(t) d\mu_{x}(\boldsymbol{\theta}) \right|  < \eta/3 \]
for $|t|\leq A$.    
Thus $|g(x')(t) -g(x)(t)|< \eta$ for $|t|\leq A$.
\end{proof}

The rest of the task is to show that the map 
\[ (g,\Phi): X\to B_1(V[a,b])\times Y, \quad x\mapsto (g(x),\Phi(x)) \]
is a $\delta$-embedding with respect to $d$.
Suppose $x,x'\in X$ satisfy $(g(x),\Phi(x)) = (g(x'),\Phi(x'))$.
We want to show $d(x,x')<\delta$.
Let $w(\Phi(x)) = w(\Phi(x')) = (w_n)_{n\in \mathbb{Z}}$.
We divide the argument into two cases, according to whether the origin is tame or wild.

\textbf{Case 1.}
Suppose $\dist(0,\partial(\Phi(x))) >L_0-4=n_0 N >2N$.
Take an integer $n$ with $0\in I(\Phi(x),n)$.
Then $|I(\Phi(x),n)| > 2n_0 N$ and hence $s_{x,n}-r_{x,n}>n_0$.
Let $k$ be the maximum integer satisfying $n_k < s_{x,n}-r_{x,n}$.
By Lemma \ref{lemma: number of nontrivial weights}
the points $y_{x,n}$ and $y_{x',n}$ belong to $W_k$.
Then by (\ref{eq: formula for g}) and Lemma \ref{lemma: successive perturbation} (2) (with $n'= s_{x,n}-r_{x,n}-1  \geq n_k$), 
we get 
\[ D_1^{s_{x,n}-r_{x,n}-2}\left(\left(\Pi_M(T^{n+r_{x,n}N}x), y_{x,n}\right), \left(\Pi_M(T^{n+r_{x,n}N}x'), y_{x',n}\right)\right) 
   < \varepsilon'. \]
From the second condition on $\varepsilon'$ in (\ref{eq: choice of varepsilon'}), 
this implies that for all integers $i$ with $n+(r_{x,n}+1)N\leq i < n+(s_{x,n}-1)N$ 
\[  d(T^i x, T^i x') < \varepsilon.  \]
Since $\dist(0,\partial I(\Phi(x),n)) > n_0N > 2N$, the origin is contained in $[n+(r_{x,n}+1)N, n+(s_{x,n}-1)N)$.
Thus we get $d(x,x') <\varepsilon <\delta$.
Note that the points $y_{x,n}$ and $y_{x',n}$ do not play any role in this argument.
They will become crucial in Case 2.

\textbf{Case 2.} 
Suppose $\dist(0,\partial(\Phi(x)))\leq L_0-4$.
By Lemma \ref{lemma: construction of weight} (4) there exists an integer $n\in [-R,0]$ with $w_{n,-n}=1$.
By Lemma \ref{lemma: construction of weight} (2) this implies $|I(\Phi(x),n)| >L_1 > C_2 N + 2N$.
Then $s_{x,n}-r_{x,n} > C_2 > n_0$.
Let $k$ be the maximum integer satisfying $n_k< s_{x,n}-r_{x,n}$.
By Lemma \ref{lemma: number of nontrivial weights}, $y_{x,n}, y_{x',n}\in W_k$.
By (\ref{eq: formula for g}) and Lemma \ref{lemma: successive perturbation} (2) 
\[   D_1^{s_{x,n}-r_{x,n}-2}\left(\left(\Pi_M(T^{n+r_{x,n}N}x), y_{x,n}\right), \left(\Pi_M(T^{n+r_{x,n}N}x'), y_{x',n}\right)\right) 
   < \varepsilon'. \]
This is the same as in Case 1.
But the next step is different.
Since $w_{n,-n}=1$, the points $\pi(x) = w_{n,-n}\pi(x)$ and $\pi(x') = w_{n,-n}\pi(x')$
appear as the $n$-th entries of $y_{x,n}$ and $y_{x',n}$ respectively.
Therefore
\[ D(\pi(x),\pi(x')) \leq  D_1^{s_{x,n}-r_{x,n}-2}\left(\left(\Pi_M(T^{n+r_{x,n}N}x), y_{x,n}\right), \left(\Pi_M(T^{n+r_{x,n}N}x'), y_{x',n}\right)\right) 
   < \varepsilon'. \]
By the first condition on $\varepsilon'$ in (\ref{eq: choice of varepsilon'}), we finally get $d(x,x') < \varepsilon <\delta$.

We have completed the proof of Proposition \ref{prop: main proposition}.
Thus Theorem \ref{thm: technical main theorem} has been proved.

\vspace{0.5cm}

\address{Yonatan Gutman, Institute of Mathematics, Polish Academy of Sciences,
ul. \'{S}niadeckich~8, 00-656 Warszawa, Poland}

\textit{E-mail address}: \texttt{y.gutman@impan.pl}

\vspace{0.5cm}

\address{ Masaki Tsukamoto \endgraf
Department of Mathematics, Kyoto University, Kyoto 606-8502, Japan}

\textit{E-mail address}: \texttt{tukamoto@math.kyoto-u.ac.jp}

\end{document}